\newtheorem{corollary}{Corollary}
\newtheorem{lemma}[corollary]{Lemma}
\newtheorem{definition}[corollary]{Definition}
\newtheorem{theorem}[corollary]{Theorem}
\begin{document}

\title{Composition Operators with Quasiconformal Symbols
\footnote{2010 Mathematics Subject Classification:  {47B33}, 30C62}
\footnote{Key words: Composition operator,  quasiconformal mapping, Carleson box}}
\date{}

\author{{
XIANG FANG, KUNYU GUO, AND ZIPENG WANG}}

\maketitle
\def\cc{\mathbb{C}}
\def\zz{\mathbb{Z}}
\def\nn{\mathbb{N}}
\def\rr{\mathbb{R}}
\def\qq{\mathbb{Q}}
\def\dd{\mathbb{D}}
\def\tt{\mathbb{T}}
\def\bb{\mathbb{B}}
\def\ff{\mathbb{F}}

\def\A{\mathcal{A}}
\def\B{\mathcal{B}}
\def\D{\mathcal{D}}
\def\L{\mathcal{L}}
\def\M{\mathcal{M}}
\def\Mperp{\mathcal{M}^\perp}
\def\N{\mathcal{N}}
\def\K{\mathcal{K}}
\def\F{\mathcal{F}}
\def\R{\mathcal{R}}
\def\s{\mathcal{S}}
\def\p{\mathcal{P}}
\def\P{\mathcal{P}}
\def\T{\mathcal{T}}
\def\O{\mathcal{O}}
\def\Z{\mathcal{Z}}
\def\bg{L_a^2(\mathbb{D})}
\def\hd{H^2(\mathbb{T})}
\def\phd{D^{ph}}
\def\dh{D^h}
\def\ma{\textsf{m}}
\def\cz{Calder\acute{o}n-Zygmud }
\def\l2{L^2(\dd)}
\def\sht{a space of homogeneous type}

\def\cod{\text{cod}}
\def\fd{\text{fd}}
\def\ker{\text{ker}}
\def\ran{\text{ran}}
\def\mp{\text{mp}}

\def\al{\alpha}
\def\la{\lambda}
\def\ep{\epsilon}
\def\sig{\sigma}
\def\Sig{\Sigma}
\def\cd{\mathbb{C}^d}
\def\bm{\mathcal{M}}
\def\bn{\mathcal{N}}
\def\hN{H^2\otimes \mathbb{C}^N}
\def\ba{\mathcal{A}}
\def\hm{H^2\otimes \mathbb{C}^m}
\def\mb{\mathcal{M}^{\perp}}
\def\pr{{\mathbb C}[z_1,\cdots,z_n]}
\def\nb{\mathcal{N}^{\perp}}
\def\hrd{H^2(\mathbb{D}^n)}
\def\be{\mathcal{E}}
\def\po{{\mathbb C}[z,\,w]}
\def\hr{H^2({\mathbb D}^2)}
\def\bigpa#1{\biggl( #1 \biggr)}
\def\bigbracket#1{\biggl[ #1 \biggr]}
\def\bigbrace#1{\biggl\lbrace #1 \biggr\rbrace}

\def\papa#1#2{\frac{\partial #1}{\partial #2}}
\def\dbar{\bar{\partial}}

\def\oneover#1{\frac{1}{#1}}

\def\meihua{\bigskip \noindent $\clubsuit \ $}
\def\blue#1{\textcolor[rgb]{0.00,0.00,1.00}{#1}}
\def\red#1{\textcolor[rgb]{1.00,0.00,0.00}{#1}}

\def\norm#1{||#1||}
\def\inner#1#2{\langle #1, \ #2 \rangle}

\def\divide{\bigskip \hrule \bigskip}

\def\bigno{\bigskip \noindent}
\def\medno{\medskip \noindent}
\def\smallno{\smallskip \noindent}
\def\bignobf#1{\bigskip \noindent \textbf{#1}}
\def\mednobf#1{\medskip \noindent \textbf{#1}}
\def\smallnobf#1{\smallskip \noindent \textbf{#1}}
\def\nobf#1{\noindent \textbf{#1}}
\def\nobfblue#1{\noindent \textbf{\textcolor[rgb]{0.00,0.00,1.00}{#1}}}

\def\vector#1#2{\begin{pmatrix}  #1  \\  #2 \end{pmatrix}}

\def\cfh{\mathfrak{CF}(H)}

\begin{abstract}
 \noindent This paper seeks to extend the theory of composition operators on analytic functional Hilbert spaces from analytic symbols to quasiconformal ones. The focus is the boundedness but  operator-theoretic questions are  discussed as well.  In particular, we  present a thorough analysis of $L^p$-estimates of {a class of singular integral operators} $P_\varphi$ associated with a quasiconformal mapping $\varphi$. 
\end{abstract}


\section{Introduction and Main Results}
Composition operator is  a branch of  modern operator theory  \cite{CM1995}, \cite{S1993}, \cite{SM1993}, \cite{zhu2007} and a typical object of study  involves an analytic self-map $\varphi: \dd\to \dd$ of the unit disk. Let $H$ be a Hilbert space of analytic functions, such as the Hardy space $H^2(\dd)$ or the Bergman space $L^2_a(\dd)$. Then one considers  $C_\varphi$  defined by
  $$C_\varphi(f)=f\circ \varphi, \quad f \in H.$$
  The boundedness of $C_\varphi$ follows from the Littlewood subordination principle. Once the boundedness is in hand, one can ask various operator-theoretic questions such as compactness, Schatten class membership, Fredholm theory, etc.

 \bigno
  {In this paper we seek to extend the symbol $\varphi$ to non-analytic self-maps of  $\dd$. This is  certainly not a new idea, although the theory is far from being mature.
There exist  quite a few references about  composition operators on nonanalytic function spaces, such as Sobolve-type spaces or BMO-type spaces. In these situations, the symbols are usually not analytic. Of particular interests to us  is   quasiconformal composition. Reimann's theorem on composing BMO functions with quasiconformal symbols \cite{AIM2008} is perhaps the best known result along this line.
 Other places where quasiconformal composition   appears  include
  \cite{GGR1995},  \cite{HKM2014}, \cite{Hu}, \cite{KKSS1014}, \cite{KXZZ2017}, \cite{Nag}, \cite{Schippers}, etc.
In particular, in view of the similiarity of the titles, it appears enlightening to compare the present article with  \cite{KXZZ2017} which studies  composition  on  $Q$-spaces  $Q_\alpha(\rr^n)(0<\alpha<1)$ with quasiconformal symbols whose Jacobians are $A_1$ weights.  The specific problems treated in these two papers are different.
The methodology in  \cite{KXZZ2017} is an interplay between quasiconformal mapping and harmonic analysis, whereas ours has an additional flavor of complex analysis.


 \bigno
  {In this paper we study quasiconformal composition operators on   analytic function spaces and we choose to work on the Bergman space $L_a^2(\dd)$.   Our construction clearly makes sense on other analytic spaces as well, but such a task is not pursued in this paper.
 In this setting, the composition operator is  initially defined as a map from $H^\infty(\dd)$ to $L^\infty(\dd)$. The idea of such a study can be traced back to R. Rochberg's work in 1994 \cite{rochberg1994}.  To the best of our knowledge,   there is little progress along this line since then.  In general, when acting on analytic spaces, non-analytic symbols form conceivably a rather wild world for which  the boundedness of $C_\varphi$ presents a serious challenge, not to mention other fine properties.}   One contribution of this paper is to manifest that  a decent theory can be established   for  quasiconformal composition on analytic function spaces, although the methodology is, as expected, quite different from that of analytic symbols.

 \bigno
 Our first motivation for considering quasiconformal composition starts with the observation that if one is set to consider non-analytic  $\varphi$, then  $f\circ \varphi$ is not necessarily in  the Bergman/Hardy space we start with. So we consider a Toeplitz-composition type operator:
  $$Q_\varphi(f)=P(f\circ \varphi),$$
  where $P$ denotes the Bergman/Szego projection. If we look at the   extreme situation, i.e., we ask $\varphi$ to be anti-analytic, such as $\varphi(z)=\bar{z}$,  then $Q_\varphi$ kills the analytic function $f$ entirely, except for the constant term. So   a reasonable idea is to allow $\varphi$ to have a controlled degree of analyticity, which points precisely to quasiconformal mappings.  For any fixed $K \in [1, \infty)$, we say that a homeomorphism $\varphi:\dd\to\dd$ is  a $K$-quasiconformal mapping  if it satisfies
\begin{itemize}
\item[\emph{(1)}] $\varphi\in W^{1, 2}_{\emph{loc}}(\dd)$ \emph{(}that is, $\varphi, \partial_z \varphi, \partial_{\bar{z}} \varphi \in L^2_{\emph{loc}}(\dd)$\emph{)};
\item[\emph{(2)}]  $|\partial_{\bar{z}} \varphi(z)| \leq \frac{K-1}{K+1}|\partial_z \varphi(z)|$   for almost every $z\in\dd$.
\end{itemize}

\bignobf{Two Simplifications.} (1)  It is  attempting to consider general quasiregular mappings instead of  the quasiconformal symbols  used in this paper, which are homeomorphisms of the unit disk; this will simplify many arguments.
But given that this   is the first paper along this line and quasiconformal mappings  illuminate the ideas better, and moreover, they present a theory which   is  already interesting and complicated,  it appears reasonable to us to refrain to homeomorphisms at this moment. General symbols are certainly good problems for  further work.

\bigno  (2) After some initial study of the $Q_\varphi$ operator, it becomes clear to us that, when $\varphi$ is not analytic, the two maps
  $$f \mapsto f\circ \varphi \quad \text{and} \quad f\mapsto P(f\circ \varphi)$$
  differ significantly because the latter involves the extra complicacy  of the Szego projection or the Bergman projection. Because the first map is   rich and difficult enough, in this paper we   focus on it. We plan to treat the second map in a separate work. {In \cite{rochberg1994}, using probabilistic methods, Rochberg considered certain special $\varphi$ and obtained several sufficient conditions for the boundedness of $Q_\varphi$, where $P$ is the Szego projection.}

\bigno The second motivation of this paper comes from the theory of harmonic quasiconformal mapping. In 1968, Martio \cite{Ma1968} asked whether the Possion extension of a $K$-quasisymmetric mapping on the unit circle must be a quasiconformal mapping on $\dd$. This problem is negative answered by \cite{Pa2002}. We study this question via operator theory on the analytic function spaces. With the help of Heinz's inequality \cite{Heinz1959}, if $\varphi$ is a harmonic quasiconformal mapping, then both $C_\varphi$ and $C_{\varphi^{-1}}$ are bounded from $L^2_a(\dd)$ into $L^2(\dd)$.
As an application of Theorem \ref{T:boundforC}, a harmonic quasiconformal map on $\dd$ must be bi-Lipschitz on $\tt$.


\bigno The third motivation of this paper is to study twisted Bergman projections.
When $\varphi:\dd\to\dd$ is measurable,   the study of composition operators is in a sense equivalent to the study of the following singular integral operator
\begin{equation}\label{E:soperator}
P_\varphi f(z):
=\int_{\mathbb{D}}\frac{f(w)}{(1-\varphi(z)\overline{\varphi(w)})^2}dA(w),
\end{equation}
where $dA(z)$ is the normalized Lebesgue measure on $\dd$. Note that $P_\varphi=C_\varphi C_\varphi^*$.

\bigno Similar formulas have been used by a number of researchers. In \cite{CG2006}, Cowen and Gallardo-Gutierrez obtained a description of the adjoints of composition operators. Muhly and Xia used them to study automorphisms of the Toeplitz algebra in \cite{MX1995}.

\bigno In contrast to the standard Bergman projection, in general, $P_\varphi$ is not a Calderon-Zygmund-type operator (CZO) over a space of homogeneous type.
When $\varphi$ is analytic,  the $L^2$-boundedness of $P_\varphi$ on $L^2(\dd)$ is equivalent to that on $L^2_a(\dd)$. Zhu \cite{zhu20071}, \cite{zhu2007} obtained $L^p$ boundedness of  $P_\varphi$   for $1<p<\infty$.

\bigno In this paper we obtain  a rather thorough analysis of the boundedness of $P_\varphi$ for quasiconformal $\varphi$, including both weak-$(1, 1)$ and $L^p$-estimates ($1<p\leq\infty$). The proofs form a mixture of quasiconformal mapping, harmonic analysis, and operator theory.

\bigno Our first task is to characterize the $L^2$-boundedness of $C_\varphi$. Next are some needed notations.  Let $\varphi$ be a $K$-quasiconformal mapping over the unit disk $\dd$ with $\varphi(0)=0$. It extends uniquely to a homeomorphism from the closed disk $\bar{\dd}$ to itself (Theorem 8.2, \cite{lv}). We let $\tilde{\varphi}: \tt \to \tt$    denote the boundary mapping.
Let $m_\varphi(z)=\frac{1-|z|}{1-|\varphi(z)|}$.
We also recall
  the so-called extremal distortion function $\psi_K(r)$.
  \begin{definition} \textsc{(\cite{AVM1988}, \cite{AVM1997})} For each $K\in[1,\infty), r \in [0, 1]$, we define
\begin{equation}\psi_K(r)=\sup_{|z|=r}\{|\varphi(z)|: \varphi \text{ is  $K$-quasiconformal over } \dd  \text{ with } \varphi(0)=0\}.
\end{equation}
\end{definition}
\noindent It is easy to see that $\psi_K(r)$ is a strictly increasing function from $[0, 1]$ to $[0, 1]$, with $\psi_K(0)=0$ and $\psi_K(1)=1$. It satisfies the following remarkable semigroup property \cite{lv}: For any $K_1, K_2 \in [1, \infty)$,
\begin{equation}\label{E:semigroup}\psi_{K_1K_2}=\psi_{K_1}\circ\psi_{K_2}.\end{equation} Moreover, when $K=1$, $\psi_1(r)=r$ by the Schwarz lemma.

\begin{theorem}\label{T:boundforC}
The operator $C_\varphi$ extends to be a bounded operator $C_\varphi: L^2_a(\mathbb{D})\to L^2(\mathbb{D})$ if and only if $\widetilde{\varphi}^{-1}$ is  Lipschitz on $\tt$. Moreover, in case of boundedness, if $\varphi(0)=0$, then
$$c_2\sup_{z\in\mathbb{D}} m_\varphi(z) \leq  \|C_\varphi\|_{\bg\to L^2(\dd)}\leq \frac{c_1}{1-\psi_K^2(\frac{1}{2})} \sup_{z\in\dd} m_\varphi(z),$$
where $c_1$ is a universal constant and  $c_2=c_2(K)$ is a positive constant depending only on $K$.
\end{theorem}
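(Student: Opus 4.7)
The plan is to identify $\|C_\varphi\|^2$ with the norm of a Bergman--Carleson measure, translate the Carleson-box condition into a boundary-geometric condition on $\tilde\varphi$ via quasiconformal distortion, and match the resulting quantity with $\sup_z m_\varphi(z)$. For the first step, since $\varphi$ is a homeomorphism, a change of variables gives
$$\|C_\varphi f\|_{L^2(\dd)}^2=\int_\dd |f(w)|^2\,d\mu_\varphi(w),\qquad d\mu_\varphi:=\varphi_\ast(dA|_\dd),$$
so $C_\varphi:L_a^2\to L^2$ is bounded iff $\mu_\varphi$ is a Carleson measure for $L_a^2(\dd)$. By the standard Carleson-box characterization this is equivalent to $\mathrm{Area}(\varphi^{-1}(S(I)))\leq C|I|^2$ for every Carleson box $S(I)$ over an arc $I\subset\tt$, and $\|C_\varphi\|^2$ is comparable to the smallest such $C$.

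The geometric heart of the proof is the claim
$$\mathrm{Area}\bigl(\varphi^{-1}(S(I))\bigr)\ \asymp\ |\tilde\varphi^{-1}(I)|^2,$$
with an upper constant of the form $c_1/(1-\psi_K^2(\tfrac12))$ and a lower constant depending only on $K$. To prove it, I would compose with disk automorphisms so that the ``top corner'' of $S(I)$ and its $\varphi^{-1}$-image both become $0$; the normalized map $\Psi$ is then a $K$-quasiconformal self-map of $\dd$ fixing $0$, to which the extremal estimate $|\Psi(z)|\leq\psi_K(|z|)$ and its inverse apply. The scale $|z|=\tfrac12$ is the one intrinsic to Carleson boxes (the top of $S(I)$ sits at pseudo-hyperbolic distance $\asymp\tfrac12$ from the relevant reference points in $S(I)$), and converting an estimate on $|\Psi(z)|$ at that scale into one on $1-|\Psi(z)|$ produces the denominator $1-\psi_K^2(\tfrac12)$. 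Combined with the first step, this already settles the ``iff'' statement: $C_\varphi$ is bounded iff $\tilde\varphi^{-1}\in\mathrm{Lip}(\tt)$.

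The last step is to match $\sup_I |\tilde\varphi^{-1}(I)|/|I|$ with $\sup_z m_\varphi(z)$. For the upper direction, given an arc $I$ I would pick a point $z$ on the radial ray through the midpoint of $\tilde\varphi^{-1}(I)$ at Euclidean distance $\asymp |\tilde\varphi^{-1}(I)|$ from the boundary; QC distance-to-boundary estimates give $1-|\varphi(z)|\asymp |I|$, so $|\tilde\varphi^{-1}(I)|/|I|\lesssim m_\varphi(z)$. For the matching lower bound on $\|C_\varphi\|$ itself, I would test directly against normalized Bergman reproducing kernels $k_a(w)=(1-|a|^2)/(1-\bar a w)^2$ at $a=\varphi(z)$ with $z$ nearly attaining the supremum of $m_\varphi$; integrating the lower bound for $|k_a\circ\varphi|$ over a Euclidean disk of radius $\asymp 1-|z|$ around $z$ yields $\|C_\varphi (k_a/\|k_a\|)\|_{L^2}^2\gtrsim m_\varphi(z)^2$, with the constant coming from the $K$-QC H\"older continuity of $\varphi$ and producing the claimed $c_2=c_2(K)$.

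The main obstacle is the sharp area comparison of the second step with constant $1/(1-\psi_K^2(\tfrac12))$. Loose quasiconformal H\"older estimates would only yield $1/(1-\psi_K(r))^\alpha$-type bounds at some ad hoc scale $r$, and extracting the precise exponent $2$ and the precise scale $r=\tfrac12$ forces the Möbius-normalization step together with a direct invocation of the extremal function $\psi_K$ rather than any of its weaker consequences.
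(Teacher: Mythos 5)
Your overall strategy (pull $dA$ back to the measure $\mu_\varphi$ with $\mu_\varphi(E)=|\varphi^{-1}(E)|$, test the Carleson condition on boxes, and convert the box condition into a boundary condition on $\widetilde{\varphi}^{-1}$) is coherent and genuinely different from the paper's, and your lower bound via normalized reproducing kernels is essentially the paper's own (the paper legitimizes the step ``integrate the lower bound over a disk of radius $\asymp 1-|z|$'' by the Iwaniec--Nolder mean-value inequality for the quasiregular function $f\circ\varphi$, Lemma \ref{L:INthm01}). The gap is in the step you yourself call the geometric heart. The mechanism you propose for $|\varphi^{-1}(S(I))|\asymp|\widetilde{\varphi}^{-1}(I)|^2$ --- M\"obius-normalize so that the top corner of $S(I)$ and its preimage go to $0$, then apply the Hersch--Pfluger bound $|\Psi(z)|\le\psi_K(|z|)$ at the single scale $|z|=\frac12$ --- controls only the image of a set of \emph{bounded} pseudohyperbolic diameter about the normalized point; that is precisely the content of the paper's Lemma \ref{L:areachangephd} on pseudohyperbolic disks. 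A Carleson box is not such a set: it contains points at arbitrarily large hyperbolic distance from its top corner, so no single application of $\psi_K$ at a fixed scale confines $\varphi^{-1}(S(I))$ to a comparable box, and summing the single-scale estimate over a Whitney decomposition fails without further input (the Mori--H\"older bound $1-|\varphi^{-1}(\zeta)|\lesssim(1-|\zeta|)^{1/K}$ is not square-summable over the Whitney squares of $S(I)$ once $K\ge 2$). Tellingly, when the paper does prove a Carleson-box distortion statement (Lemma \ref{L:Key01}), it needs Koskela's arc-diameter estimate (Lemma \ref{L:koskelath01}), the a priori boundedness of $a_{\varphi^{-1}}$ --- i.e.\ it \emph{assumes} the Lipschitz boundary condition rather than deriving it --- plus Rengel's inequality and the modulus definition of quasiconformality. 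So your two-sided box estimate with a constant depending only on $K$, let alone with the explicit constant $c_1/(1-\psi_K^2(\frac12))$ and universal $c_1$, is not established as written.

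The paper sidesteps this by testing the Carleson condition on pseudohyperbolic disks instead of boxes: it forms $\widehat{\varphi}_r(z)=|\varphi^{-1}(D^{ph}(z,r))|/|D^{ph}(z,r)|$, bounds the Toeplitz operator $T_{\widehat{\varphi}_r}$ by Lemma \ref{L:areachangephd} (where your normalization-plus-$\psi_K$ idea is exactly right and is what produces the factor $(1-\psi_K^2(\frac12))^{-1}$), and dominates $C_\varphi^*C_\varphi=T_J$ by $T_{\widehat{\varphi}_r}$ via a subharmonicity estimate; the resulting bound is $\sup_z m_\varphi(z)$ directly, with no detour through boundary arcs. The remaining equivalence $\sup_z m_\varphi(z)<\infty$ if and only if $\widetilde{\varphi}^{-1}$ is Lipschitz on $\tt$ is then quoted from Astala--Koskela (Lemmas \ref{F:AK01} and \ref{F:AK02}), which replaces your unproved comparison of $1-|\varphi(z)|$ with arc lengths. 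To keep your route you would need to prove the box comparison by a genuine multi-scale argument (e.g.\ reflection across $\tt$ and metric quasisymmetry of the extension), or else switch the test sets to pseudohyperbolic disks --- at which point you have rejoined the paper's proof.
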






\bigno  Koo-Smith \cite{KS2007}, Koo-Wang \cite{KW2009}, and Wogen \cite{wogen1988} contain some general boundedness criteria for smooth symbols on analytic function spaces on the higher dimensions.

\bigno Next we consider essential boundedness. {Let $\varphi: \dd \to \dd$ be an analytic mapping. For each $w\in\dd-\varphi(0)$, define  the Nevanlinna counting function by
 $N_{\varphi,\gamma}(w)=\mathop{\sum}\limits_{z\in \varphi^{-1}(w)}(\log\frac{1}{|z|})^\gamma, $
where $\gamma=1, 2$.
Shapiro's beautiful essential norm formula \cite{S1987} for composition operator $C_\varphi$ on $\hd$ states
$\|C_\varphi\|_e^2=\mathop{\limsup}\limits_{{|w|\to 1}}\frac{N_{\varphi,1}(w)}{\log(\frac{1}{|w|})}.
$
On the Bergman space Poggi-Corradini \cite{Pietro1998} obtained
$\|C_\varphi\|_e^2=\mathop{\limsup}\limits_{{|w|\to 1}}\frac{N_{\varphi,2}(w)}{\log(\frac{1}{|w|})^2}.
$ For a quasiconformal mapping $\varphi$, $
N_\varphi(z)=\log\frac{1}{|\varphi^{-1}(z)|},  z\not=0.$ } This is a good example that although the quasiconformal assumption simplifies $N_\varphi$ considerably, the proof of the following theorem is still rather technical, hence better presented for quasiconformal only. The notation $N_\varphi$ serves mainly to illustrate the similiarity.

\begin{theorem}\label{T:compactforC}
Let $\varphi$ be a quasiconformal mapping over $\dd$ with $\varphi(0)=0$. If $\widetilde{\varphi}^{-1}$ is Lipschitz on $\tt$, then
 \begin{equation}\label{E:essentialnorm}
c_2 \mathop{\lim}\limits_{{t\to 1}}\mathop{\sup}\limits_{{|z|>t}}\frac{N_\varphi(z)}{\log\frac{1}{|z|}}\leq \|C_\varphi\|_e\leq \frac{c_1 \psi_K(\frac{1}{2})}{1-\psi_K^2(\frac{1}{2})} \mathop{\lim}\limits_{{t\to 1}}\mathop{\sup}\limits_{{|z|>t}}\frac{N_\varphi(z)}{\log\frac{1}{|z|}}.
 \end{equation}
 where $c_1$ is a universal constant and $c_2=c_2(K)$  is a positive constant depending only on $K$.
In particular, $C_\varphi$ is compact if and only if $\lim_{|z|\to 1} m_\varphi(z)=0.$
\end{theorem}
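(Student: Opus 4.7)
The plan is to follow the Shapiro--Poggi-Corradini paradigm: approximate $C_\varphi$ by compact truncations for the upper estimate, and detect the boundary growth of $N_\varphi$ by testing against normalized reproducing kernels for the lower estimate. Both bounds will lean on the tools already used in Theorem \ref{T:boundforC}, re-applied to the tail of $\varphi$ over the boundary layer $\{|w|>t\}$. I would use throughout that $\varphi$ extends to a homeomorphism of $\bar\dd$, so $|\varphi(z)|\to 1$ if and only if $|z|\to 1$, and that quasiconformal distortion of the hyperbolic geometry is controlled by $\psi_K$.

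For the lower bound, I would test against the normalized reproducing kernels $k_a(z)=(1-|a|^2)/(1-\bar a z)^2$. Since $\|k_a\|_{L_a^2}=1$ and $k_a\to 0$ weakly as $|a|\to 1$, for every compact $T:L_a^2(\dd)\to L^2(\dd)$ one has $\|T k_a\|\to 0$, hence $\|C_\varphi\|_e\ge \limsup_{|a|\to 1}\|C_\varphi k_a\|$. Bounding $\|C_\varphi k_a\|^2=\int_\dd |k_a(\varphi(z))|^2\, dA(z)$ from below by restricting the integral to $\varphi^{-1}(D_h(a,\tfrac12))$ (pseudo-hyperbolic disc), quasiconformal invariance of the pseudo-hyperbolic structure---quantitatively controlled by $\psi_K$---gives $|\varphi^{-1}(D_h(a,\tfrac12))|\gtrsim_K (1-|\varphi^{-1}(a)|)^2$, while $|k_a|\sim (1-|a|^2)^{-1}$ on that disc. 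Combining these and using $N_\varphi(a)/\log(1/|a|)\sim (1-|\varphi^{-1}(a)|)/(1-|a|)$ for $|a|$ near $1$ yields $\|C_\varphi k_a\|\gtrsim_K N_\varphi(a)/\log(1/|a|)$; taking the limsup produces the lower bound with $c_2=c_2(K)$.

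For the upper bound I would introduce the truncation $T_t f(z):=\chi_{\{|\varphi(z)|\le t\}}(z)\,f(\varphi(z))$ for $t\in(0,1)$. This operator is compact, because any weakly null bounded sequence $(f_n)$ in $L_a^2(\dd)$ converges to zero uniformly on the compact set $\{|w|\le t\}\subset\dd$, whence $T_t f_n\to 0$ in $L^2(\dd)$. Therefore $\|C_\varphi\|_e\le \|C_\varphi-T_t\|$, with the remainder acting as a composition restricted to the boundary layer $\{|\varphi(z)|>t\}$. Re-running the Carleson-box / $\psi_K$-scale decomposition underlying Theorem \ref{T:boundforC}, but summing only boxes whose roots lie in $\{|w|>t\}$, produces $\|C_\varphi-T_t\|\le \frac{c_1\,\psi_K(1/2)}{1-\psi_K^2(1/2)}\sup_{|w|>t}\bigl(N_\varphi(w)/\log(1/|w|)\bigr)$; the extra factor $\psi_K(1/2)$ arises because, after excluding the innermost level, the outermost surviving $\psi_K$-scale sits a factor $\psi_K(1/2)$ deeper than in Theorem \ref{T:boundforC}. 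Sending $t\to 1$ then yields (\ref{E:essentialnorm}).

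The compactness criterion follows at once: $\|C_\varphi\|_e=0$ if and only if the limsup in (\ref{E:essentialnorm}) vanishes, which by the substitution $w=\varphi^{-1}(z)$ and the fact that $\varphi$ is a homeomorphism of $\bar\dd$ is equivalent to $\lim_{|w|\to 1} m_\varphi(w)=0$. The main obstacle will be the upper bound: one must redo, with quantitative care, the Carleson-box / $\psi_K$-scale bookkeeping from the proof of Theorem \ref{T:boundforC}, tracking how the truncation parameter $t$ interacts with the discrete scales generated by iterating $\psi_K$, in order to extract precisely the gain factor $\psi_K(1/2)$ displayed in (\ref{E:essentialnorm}).
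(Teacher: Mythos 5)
Your overall architecture (compact truncation for the upper bound, weakly null test functions for the lower bound) matches the paper's, and two of your three components are sound. Your lower bound is complete and is in fact a legitimate alternative to the paper's argument: the paper bounds $\frac{1-|z_n|}{1-|\varphi(z_n)|}\leq c\,\|f_n\circ\varphi\|_{L^2(\dd)}$ via the Iwaniec--Nolder subharmonicity property of the quasiregular function $f_n\circ\varphi$ (Lemma \ref{L:INthm01}), whereas you change variables and invoke the area-distortion estimate $|\varphi^{-1}(D^{ph}(a,\tfrac12))|\gtrsim_K (1-|\varphi^{-1}(a)|^2)^2$ from Lemma \ref{L:areachangephd}; both yield $\|C_\varphi k_a\|\gtrsim_K m_\varphi(\varphi^{-1}(a))$. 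Your compactness argument for the truncation $T_t$ (uniform convergence on the compact set $\{|w|\leq t\}$ for weakly null sequences) is also correct, and is actually simpler than the paper's Claim A, which routes through H\"older and Astala's integrability of the Jacobian.

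The genuine gap is the quantitative upper estimate of $\|C_\varphi-T_t\|$, which you defer to ``re-running the Carleson-box\slash $\psi_K$-scale decomposition underlying Theorem \ref{T:boundforC}.'' No such decomposition occurs in that proof: Carleson boxes enter this paper only in Section \ref{S:Pplus} for $P_\varphi$, and Theorem \ref{T:boundforC} is proved via Toeplitz operators and pseudohyperbolic disks. The step you are missing is the following. Write $\|(C_\varphi-T_t)f\|^2=\int_{|w|>t}|f(w)|^2 J(w,\varphi^{-1})\,dA(w)$; cover the annulus $\{|w|>t\}$ by pseudohyperbolic disks $D^{ph}(\xi_n,\tfrac12)$ from a fixed lattice with bounded overlap of the dilated disks $D^{ph}(\xi_n,\tfrac34)$, checking that every disk meeting the annulus has $|\xi_n|\geq t_r\to 1$; on each disk use the subharmonic mean-value estimate for $|f|^2$ together with
\begin{equation*}
\int_{D^{ph}(\xi_n,\frac12)}J(w,\varphi^{-1})\,dA(w)=|\varphi^{-1}(D^{ph}(\xi_n,\tfrac12))|\leq \psi_K^2(\tfrac12)\,\frac{(1-|\varphi^{-1}(\xi_n)|^2)^2}{(1-\psi_K(\frac12)^2|\varphi^{-1}(\xi_n)|^2)^2},
\end{equation*}
which is Lemma \ref{L:areachangephd} applied to $\varphi^{-1}$. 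This is where the factor $\psi_K(\tfrac12)/(1-\psi_K^2(\tfrac12))$ actually comes from; your heuristic that it reflects ``the outermost surviving $\psi_K$-scale sitting a factor $\psi_K(\tfrac12)$ deeper'' does not correspond to any step of a correct argument. Since this estimate is the technical heart of the theorem, the proposal as written does not yet prove the upper bound in (\ref{E:essentialnorm}).
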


\bigno The choice of $\frac{1}{2}$ in Theorem \ref{T:boundforC} and Theorem \ref{T:compactforC} can be replaced by any number in $(0, 1)$. It appears  that optimizing on this choice still won't give us the best constant, so we refrain from doing so. Say, a crude estimate shows that one can take $c_1=147$ in the above, which is certainly not sharp.  Next it comes to the  weak $(1, 1)$ and $L^p$-boundedness of $P_\varphi$.



\begin{theorem}\label{T:extraforShplus}Let $\varphi$ be a quasiconformal mapping over $\dd$ such that $\varphi(0)=0$. If $\widetilde{\varphi}$ is bilipschitz on $\tt$, then
$P_\varphi$ satisfies the weak-$(1, 1)$ inequality: if $f\in L^1(\dd)$, then for any $\alpha>0$,
       \begin{equation*}\label{E:weak11}|\{z\in\dd: |P_\varphi f(z)|>\alpha\}|\leq \frac{c}{\alpha}\int_{\dd}|f(z)|dA(z),
       \end{equation*}
where $c$ is  a constant independent of $f$, and the leftmost $|\cdot|$ denotes the normalized Lebesgue area.
\end{theorem}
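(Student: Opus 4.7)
\emph{Strategy.} I prove the weak-$(1,1)$ inequality by a Calder\'on--Zygmund decomposition of $f$ at height $\alpha$ relative to the space of homogeneous type $(\dd,d\beta,dA)$, where $d\beta$ is the pseudo-hyperbolic metric (equivalently, relative to Carleson boxes). The good part is dealt with through an $L^2$-bound on $P_\varphi$ and the bad part through a H\"ormander-type smoothness estimate on the kernel $K_\varphi(z,w)=(1-\varphi(z)\overline{\varphi(w)})^{-2}$. The bi-Lipschitz hypothesis on $\widetilde\varphi$ enters in two distinct places: its Lipschitz half lets Theorem \ref{T:boundforC} produce $C_\varphi:\bg\to L^2(\dd)$ bounded, and the change of variables $u=\varphi^{-1}(w)$ in \eqref{E:soperator} identifies $P_\varphi=C_\varphi C_\varphi^*$, so $P_\varphi$ extends to a bounded operator on $L^2(\dd)$; the reverse Lipschitz half will be crucial for the H\"ormander estimate below.

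\emph{Decomposition and the easy pieces.} After CZ decomposition at height $\alpha$, write $f=g+\sum_Q b_Q$ with $\|g\|_\infty\lesssim\alpha$, $\int_Q b_Q\,dA=0$, $\sum_Q\|b_Q\|_1\lesssim\|f\|_1$, and $\sum_Q|Q|\lesssim\|f\|_1/\alpha$. The $L^2$-bound together with Chebyshev gives $|\{|P_\varphi g|>\alpha/2\}|\lesssim\|f\|_1/\alpha$. Let $Q^*$ denote a fixed pseudo-hyperbolic dilate of $Q$; by doubling, $|\bigcup_Q Q^*|\lesssim\|f\|_1/\alpha$. Outside $\bigcup_Q Q^*$ the standard Calder\'on--Zygmund scheme reduces everything to the single H\"ormander estimate
$$\sup_{w\in Q}\int_{\dd\setminus Q^*}\bigl|K_\varphi(z,w)-K_\varphi(z,w_Q)\bigr|\,dA(z)\le C$$
uniformly in $Q$, where $w_Q$ is the center of $Q$. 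Indeed, once this holds, summing against $|b_Q|$ and using $\sum_Q\|b_Q\|_1\lesssim\|f\|_1$ yields $\int_{\dd\setminus\bigcup Q^*}|P_\varphi b|\,dA\lesssim\|f\|_1$, and a final application of Chebyshev completes the proof.

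\emph{Main obstacle: the H\"ormander estimate for $K_\varphi$.} The QC inequality $|\partial_{\bar w}\varphi|\le\frac{K-1}{K+1}|\partial_w\varphi|$ gives the pointwise bound
$$|\nabla_w K_\varphi(z,w)|\,\lesssim_K\, \frac{|\varphi(z)|\,|\partial_w\varphi(w)|}{|1-\varphi(z)\overline{\varphi(w)}|^3}.$$
Integrating along a short path in $Q$ from $w$ to $w_Q$ and then in $z$ across $\dd\setminus Q^*$ should reduce the estimate to the classical H\"ormander bound for the standard Bergman kernel, expressed in the image variables $u=\varphi(w)$, $v=\varphi(z)$. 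Two ingredients are crucial here: (i) the bi-Lipschitz hypothesis on $\widetilde\varphi$, together with Mori's distortion theorem and Koebe-type estimates for quasiconformal mappings, guarantees that $\varphi$ sends the pseudo-hyperbolic balls $Q$ and $Q^*$ to sets that are pseudo-hyperbolically comparable to balls on the image side, so that $|1-\varphi(z)\overline{\varphi(w)}|$ is comparable to the pseudo-hyperbolic distance between $\varphi(z)$ and $\varphi(w)$ uniformly for $w\in Q$ and $z\notin Q^*$; (ii) Gehring's reverse-H\"older inequality for the Jacobian $J_\varphi=|\partial_w\varphi|^2-|\partial_{\bar w}\varphi|^2$ lets us absorb the $|\partial_w\varphi|$ factor into an $L^1$-average over $Q$, which on the image side is just the length of a pseudo-hyperbolic path. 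With these two ingredients in place, the H\"ormander integral for $K_\varphi$ is reduced to its Bergman-kernel counterpart, and the weak-$(1,1)$ bound follows.
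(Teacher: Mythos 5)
Your overall architecture (Calder\'on--Zygmund decomposition at height $\alpha$, the $L^2$-bound on $P_\varphi=C_\varphi C_\varphi^*$ for the good part, a H\"ormander-type kernel estimate for the bad part) is a genuinely different route from the paper's, which never touches kernel smoothness or cancellation: the paper proves the stronger weak-$(1,1)$ bound for the positive operator $P_\varphi^+$ with kernel $|1-\varphi(z)\overline{\varphi(w)}|^{-2}$, shows via Lemma \ref{L:Key02} together with the key distortion Lemma \ref{L:Key01} (itself proved from Koskela's lemma and Rengel's inequality) that on the singular region this kernel is dominated pointwise by $c\,\chi_{Q_{I'}}(z)\chi_{Q_{I'}}(w)/|Q_{I'}|$ for a Carleson box $Q_{I'}$ drawn from one of two shifted dyadic grids, and then quotes the known weak-$(1,1)$ bound for the resulting positive dyadic averaging operators. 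Your $L^2$ step and the treatment of the good function $g$ are fine; the issue is the bad part.

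The genuine gap is the H\"ormander estimate, which you assert rather than prove, and the mechanism you propose for it would fail. First, a $K$-quasiconformal map is only in $W^{1,p}_{\mathrm{loc}}$ for $p<2K/(K-1)$, and the line integral of $|\partial_w\varphi|$ along the particular segment from $w$ to $w_Q$ is not controlled by Gehring's reverse-H\"older inequality, which bounds area averages of the Jacobian over two-dimensional sets, not integrals over a prescribed one-dimensional path. If you instead average over $w\in Q$ so that reverse H\"older applies, you lose the $\sup_{w\in Q}$ form of the H\"ormander condition, and the pairing with $b_Q$ (known only in $L^1$) no longer closes. The derivative should be avoided entirely: write $a^{-2}-b^{-2}=(b-a)(b+a)a^{-2}b^{-2}$ and use $|\varphi(w)-\varphi(w_Q)|\le\operatorname{diam}\varphi(Q)$, which quasisymmetry compares to $\operatorname{dist}(\varphi(w_Q),\varphi(\partial Q^*))$. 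Second, and more seriously, after substituting $v=\varphi(z)$ the $z$-integral acquires the weight $J(v,\varphi^{-1})$, which is unbounded pointwise; to conclude you need $|\varphi^{-1}(E)|\lesssim|E|$ uniformly over Carleson boxes (or pseudohyperbolic annuli) $E$ at every scale around $\varphi(w_Q)$. That uniform area-distortion estimate is precisely the content of the paper's Lemma \ref{L:Key01}, is where all the work lies, and does not follow from Mori's theorem or Koebe-type estimates alone; your sketch leaves it unproved. (A minor point: $|1-\varphi(z)\overline{\varphi(w)}|$ is not comparable to the pseudohyperbolic distance between $\varphi(z)$ and $\varphi(w)$; the correct comparison is with $\max\{1-|\varphi(z)|,\,1-|\varphi(w)|,\,|\varphi(z)-\varphi(w)|\}$.)
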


\begin{theorem}\label{T:Shexpra}
Let $\varphi:\dd\to\dd$ be a quasiconformal mapping such that $\varphi(0)=0$ and $P_\varphi$ is bounded on $L^2(\dd)$. Then
\begin{itemize} \item[\emph{(i)}] $P_\varphi$ is bounded on $L^p(\dd)$ for $1<p<\infty$,
\item[\emph{(ii)}] {$P_\varphi$ is bounded from $L^p(\dd)$ to $L^1(\dd)$ for $0<p<1$,}
\item[\emph{(iii)}] $P_\varphi$ is bounded from $L^\infty(\dd)$ to $\emph{BMO}(\dd)$.
\end{itemize}
\end{theorem}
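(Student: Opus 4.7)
The plan is to treat $P_\varphi$ as a Calder\'on--Zygmund-type operator on the space of homogeneous type $(\dd, \rho_\varphi, dA)$, where $\rho_\varphi(z,w) = |1 - \overline{\varphi(w)}\varphi(z)|$ is the pseudo-hyperbolic quasi-metric pulled back through $\varphi$, and to combine kernel estimates with interpolation and duality. As a preliminary observation, the kernel $K_\varphi(z,w) = (1-\varphi(z)\overline{\varphi(w)})^{-2}$ satisfies $\overline{K_\varphi(z,w)} = K_\varphi(w,z)$, so $P_\varphi$ is self-adjoint on $L^2(\dd)$; this will cut the work for (i) in half via $L^p$-duality.

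The heart of the argument is (iii). Given $f\in L^\infty(\dd)$ and a Carleson box $B$ in the quasi-metric $\rho_\varphi$, split $f=f_1+f_2$ with $f_1=f\chi_{cB}$ for a suitable constant $c$. The piece $f_1$ is controlled by the $L^2$-hypothesis as $\|P_\varphi f_1\|_{L^2(B)} \leq C\|f\|_\infty |B|^{1/2}$, which supplies the local $L^2$-oscillation on $B$. The piece $f_2$ is controlled by the pointwise kernel-regularity estimate
\[
|K_\varphi(z,w)-K_\varphi(z_0,w)| \leq C\, \frac{|\varphi(z)-\varphi(z_0)|}{|1-\overline{\varphi(w)}\varphi(z_0)|^{3}},
\]
obtained by applying the mean value inequality to the analytic function $u\mapsto(1-u\overline{\varphi(w)})^{-2}$. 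The $K$-quasiconformality of $\varphi$ together with the distortion bound $\psi_K$ ensure that the annular sum in $\rho_\varphi$ converges, yielding the desired $\mathrm{BMO}$ estimate.

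With (iii) in hand, (i) follows by interpolation between $L^2\to L^2$ and $L^\infty\to\mathrm{BMO}$ to cover $2\leq p<\infty$, and then by self-adjointness plus $L^p$-duality to cover $1<p\leq 2$. For (ii), I would use a Hardy-type atomic decomposition $f=\sum_k c_k a_k$ in which each $a_k$ has controlled $L^2$-norm and small support; the $L^2$-boundedness of $P_\varphi$ (together with Cauchy--Schwarz, using $|\dd|=1$, to pass from $L^2$ to $L^1$) gives $\|P_\varphi a_k\|_{L^1}\leq C$, and summation via a layer-cake identity for $\|f\|_{L^p}^p$ closes the estimate.

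The main obstacle will be the kernel regularity and geometric control underlying (iii). Although $K_\varphi$ formally enjoys CZ-type size and smoothness estimates, transferring them through a merely quasiconformal (not conformal) $\varphi$ forces careful distortion control through $\psi_K$, together with a verification that the $\rho_\varphi$-Carleson boxes are doubling. Once these foundations are laid, the remaining parts follow from standard interpolation, duality, and decomposition arguments.
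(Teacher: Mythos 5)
There is a genuine gap, and it sits exactly where you locate ``the main obstacle.'' Your plan for (iii) treats $P_\varphi$ as a Calder\'on--Zygmund operator relative to the pulled-back quasi-metric $\rho_\varphi$, splitting $f=f_1+f_2$ over a $\rho_\varphi$-ball $B$ and using $\|P_\varphi f_1\|_{L^2(B)}\le C\|f\|_\infty|cB|^{1/2}$ for the local piece. To convert that into a bound on the mean oscillation over $B$ (and then over Euclidean balls, which is what $\mathrm{BMO}(\dd)$ requires) you need $|cB|\lesssim|B|$, i.e.\ doubling of sets of the form $\varphi^{-1}(Q_I)$ for Carleson boxes $Q_I$. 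That doubling is equivalent to the two-sided comparability $|Q_I|\approx|\varphi^{-1}(Q_I)|$, and the lower bound $|\varphi^{-1}(Q_I)|\gtrsim|Q_I|$ is proved in the paper (via Rengel's inequality) only under the hypothesis that $\widetilde{\varphi}$ is \emph{bi}lipschitz --- the hypothesis of Theorem \ref{T:extraforShplus}, not of Theorem \ref{T:Shexpra}. Under the present hypothesis ($P_\varphi$ bounded on $L^2$, equivalently $\widetilde{\varphi}^{-1}$ Lipschitz, i.e.\ $\sup_z m_\varphi(z)<\infty$) only the one-sided estimate $1-|\varphi^{-1}(z)|\lesssim 1-|z|$ is available, so the geometric foundation of your decomposition cannot be laid; the paper explicitly warns that $P_\varphi$ is in general not a CZO over a space of homogeneous type. (There is also the minor point that $\rho_\varphi(z,z)=1-|\varphi(z)|^2\neq0$, so $\rho_\varphi$ is not literally a quasi-metric, though that alone is repairable.)

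The paper sidesteps all of this by factoring $P_\varphi=C_\varphi\circ I_\varphi$ with $I_\varphi f(z)=\int_\dd f(w)(1-z\overline{\varphi(w)})^{-2}\,dA(w)$. Since $I_\varphi f$ is \emph{analytic}, one shows $I_\varphi:L^\infty(\dd)\to\mathcal{B}$ (Bloch) by differentiating under the integral and using the $L^3$-boundedness of $C_\varphi$ together with the standard estimate $\int_\dd|1-z\bar w|^{-3}dA(w)\sim(1-|z|^2)^{-1}$; the Coifman--Rochberg--Weiss identification of the Bloch norm with the $\mathrm{BMOA}$ norm then gives $I_\varphi:L^\infty\to\mathrm{BMO}$, and Reimann's theorem (quasiconformal composition preserves $\mathrm{BMO}$, after the equivalence of the cube, ball and homogeneous-ball definitions of $\mathrm{BMO}(\dd)$) handles the outer factor $C_\varphi$. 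This is the idea your argument is missing. Your interpolation/duality route from (iii) to (i) matches the paper's (Duong--Yan interpolation on $(\dd,|z-w|,dA)$ plus self-adjointness of $P_\varphi=C_\varphi C_\varphi^*$). For (ii), however, an atomic decomposition of a general $L^p$ function with $0<p<1$ into $L^2$-normalized atoms does not exist --- that is a Hardy-space tool --- so that step would fail; the paper instead runs (ii) through the same factorization, reducing to the Kolmogorov-type inequality for the ordinary Bergman projection applied to $g=(f\circ\varphi^{-1})J(\cdot,\varphi^{-1})$.
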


\section{Proofs of Theorem \ref{T:boundforC} and Theorem \ref{T:compactforC}}
We first introduce Hersch-Pfluger's distortion theorem which serves as a quasiconformal version of the Schwarz lemma.  To do this, we extend the definition of the
extremal distortion function from $K\geq 1$ to $K> 0$ by letting $\psi_K(r)$ be the inverse function of $\psi_{\frac{1}{K}}$ when $0<K<1$.
%
It is easy to see that $\psi_K(r)$ is a strictly increasing function from $[0, 1]$ to $[0, 1]$, with $\psi_K(0)=0$ and $\psi_K(1)=1$.

\begin{lemma} \emph{(\cite{lv}, p.64)}\label{L:HPD}
 Let $\varphi$ be a $K$-quasiconformal mapping over $\dd$ with $\varphi(0)=0$. Then
\begin{equation}
\psi_{\frac{1}{K}}(|z|)\leq |\varphi(z)|\leq \psi_{K}(|z|), \quad\quad z\in\dd.
\end{equation}
\end{lemma}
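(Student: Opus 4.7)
The plan is to prove the two bounds separately, deriving the lower bound from the upper bound by applying it to the inverse map.

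First I would dispose of the upper bound $|\varphi(z)|\leq \psi_K(|z|)$, which is essentially tautological. Since $\varphi$ is itself a $K$-quasiconformal self-map of $\dd$ with $\varphi(0)=0$, the value $|\varphi(z)|$ lies in the set over which the supremum defining $\psi_K(|z|)$ is taken. Hence $|\varphi(z)|\leq \psi_K(|z|)$ is immediate from the definition.

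For the lower bound, I would first note that the inverse of a $K$-quasiconformal mapping is again $K$-quasiconformal. This follows from the transformation rule for the Beltrami coefficient under composition with the inverse, which preserves the pointwise ratio $|\partial_{\bar z}\varphi|/|\partial_z\varphi|$, and hence the bound $(K-1)/(K+1)$. Granting this, set $w:=\varphi(z)$ and apply the already-established upper bound to $\varphi^{-1}$, which fixes the origin:
$$|z|=|\varphi^{-1}(w)|\leq \psi_K(|w|)=\psi_K(|\varphi(z)|).$$
Since $\psi_K:[0,1]\to[0,1]$ is a strictly increasing bijection, it has a well-defined inverse, which by the paper's extension convention is precisely $\psi_{1/K}$. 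Applying $\psi_{1/K}$ to both sides of the displayed inequality yields $\psi_{1/K}(|z|)\leq |\varphi(z)|$, as desired.

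The only nontrivial ingredients — and therefore the real substance hidden in the word ``classical'' — are the facts that the supremum in the definition of $\psi_K$ is actually attained, and that the resulting function is strictly increasing and continuous on $[0,1]$ with $\psi_K(0)=0$ and $\psi_K(1)=1$. These are the content of the extremal problem in quasiconformal theory and are usually established via Mori's theorem together with a normal families / compactness argument. Once those structural properties of $\psi_K$ are in hand, the proof of the lemma reduces to the two-step manipulation above, and the main obstacle is really the verification of the extremal theory rather than any inequality chasing.
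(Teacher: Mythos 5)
Your argument is correct as far as it goes, but it is worth being clear that it is not the route the paper (or the classical literature it cites) takes: the paper offers no proof at all, attributing the lemma to Lehto--Virtanen, where $\psi_K$ is \emph{not} defined as a supremum but as the explicit function $\mu^{-1}(\mu(r)/K)$ built from the modulus of the Gr\"otzsch ring, and the upper bound $|\varphi(z)|\le\psi_K(|z|)$ is the genuine content of the Hersch--Pfluger theorem, proved via monotonicity and quasi-invariance of moduli of ring domains. Because this paper instead \emph{defines} $\psi_K(r)$ as the extremal supremum, your observation that the upper bound becomes tautological is legitimate, and your derivation of the lower bound --- $\varphi^{-1}$ is again $K$-quasiconformal and fixes $0$, apply the upper bound to get $|z|\le\psi_K(|\varphi(z)|)$, then apply the increasing inverse $\psi_{1/K}=\psi_K^{-1}$ --- is exactly the standard reduction and matches the paper's convention of extending $\psi_K$ to $0<K<1$ as the inverse function. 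What your approach buys is a two-line proof relative to the paper's definition; what it hides is that all the analytic content has migrated into the unproved structural assertions (that the supremum defines a strictly increasing bijection of $[0,1]$ onto itself, so that $\psi_{1/K}$ exists on all of $[0,1]$, and that $\varphi^{-1}\in W^{1,2}_{\mathrm{loc}}$ with the same dilatation bound), which is precisely the Gr\"otzsch-modulus machinery you would need anyway to make the sup-definition agree with the classical one. You flag this honestly at the end, so there is no gap, only a relocation of the difficulty; if you wanted the lemma in the quantitative form the paper actually uses later (e.g.\ the semigroup identity $\psi_{K_1K_2}=\psi_{K_1}\circ\psi_{K_2}$ and explicit values like $\psi_K(\tfrac12)$), you would still have to invoke the modulus description, so citing \cite{lv} remains the efficient choice.
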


\bigno Now we analyze the area distortion of a pseudohyperbolic disk, where is typical in operator theory, under a quasiconformal mapping. Recall that
a pseudohyperbolic disk $D^{ph}(a,r)$ centered at $a\in\mathbb{D}$ with radius $0<r<1$
is  given by
$D^{ph}(a,r)=\{z\in\mathbb{D}:|\tau_{a}(z)|<r\},$
where
$\tau_a(z)=\frac{a-z}{1-\bar{a}z}.$
It is a Euclidean disk with center
$C=\frac{1-r^2}{1-r^2|a|^2}a $ and
 radius  $R=\frac{1-|a|^2}{1-r^2|a|^2}r.$

\bigno
 The next lemma  is  of independent interests and will be used repeatedly.

\begin{lemma}\label{L:areachangephd} Let $\varphi$ be a $K$-quasiconformal mapping over $\dd$ such that $\varphi(0)=0$ and $D^{ph}(z,r)$ be a pseudohyperbolic disk for  $z\in\dd$ and $0<r<1$. Then\begin{equation}\label{s4eq05}
\psi_{\frac{1}{K}}^2(r)\frac{(1-|\varphi(z)|^2)^2}
{(1-\psi_{\frac{1}{K}}(r)^2|\varphi(z)|^2)^2}\leq |\varphi(D^{ph}(z,r))|\leq \psi_K^2(r)\frac{(1-|\varphi(z)|^2)^2}{(1-\psi_K(r)^2|\varphi(z)|^2)^2},
\end{equation}
where $|\cdot|$ denotes the normalized Lebesgue area on $\dd$.
\end{lemma}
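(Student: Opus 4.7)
The plan is to conjugate $\varphi$ by Möbius transformations so that the disk $D^{ph}(z,r)$ becomes a Euclidean disk $D(0,r)$ centered at the origin, at which point Hersch--Pfluger (Lemma \ref{L:HPD}) applies directly. Concretely, I would set
$$\Phi \;=\; \tau_{\varphi(z)} \circ \varphi \circ \tau_z,$$
and first observe that $\Phi:\dd\to\dd$ is a homeomorphism with $\Phi(0)=0$, and since $\tau_z$ and $\tau_{\varphi(z)}$ are conformal (hence $1$-quasiconformal), $\Phi$ is $K$-quasiconformal. Thus Lemma \ref{L:HPD} yields
$$\psi_{1/K}(|w|) \;\leq\; |\Phi(w)| \;\leq\; \psi_K(|w|), \qquad w\in\dd.$$

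Next I would translate this pointwise estimate into an inclusion of sets. Since $\tau_z$ is an involution, $\tau_z(D(0,r)) = D^{ph}(z,r)$, and therefore $\varphi(D^{ph}(z,r)) = \tau_{\varphi(z)}(\Phi(D(0,r)))$. For the upper bound, the right-hand inequality gives $\Phi(D(0,r))\subset D(0,\psi_K(r))$, so
$$\varphi(D^{ph}(z,r)) \;\subset\; \tau_{\varphi(z)}\bigl(D(0,\psi_K(r))\bigr) \;=\; D^{ph}\bigl(\varphi(z),\psi_K(r)\bigr).$$
For the lower bound, the left-hand inequality says that $\Phi$ maps the circle $\{|w|=r\}$ into the annulus $\{\psi_{1/K}(r)\le|\zeta|<1\}$. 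Because $\Phi$ is a homeomorphism of $\dd$ with $\Phi(0)=0$, the domain $\Phi(D(0,r))$ contains the origin and is bounded by $\Phi(\{|w|=r\})$, hence must contain $D(0,\psi_{1/K}(r))$. Applying $\tau_{\varphi(z)}$ gives
$$D^{ph}\bigl(\varphi(z),\psi_{1/K}(r)\bigr) \;\subset\; \varphi(D^{ph}(z,r)).$$

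Finally, I would read off the areas using the Euclidean description of a pseudohyperbolic disk recalled in the text: $D^{ph}(a,\rho)$ is a Euclidean disk of radius $R = \frac{(1-|a|^2)\rho}{1-\rho^2|a|^2}$, so its normalized Lebesgue area equals $R^2 = \frac{\rho^2(1-|a|^2)^2}{(1-\rho^2|a|^2)^2}$. Applying this with $a=\varphi(z)$ and $\rho = \psi_K(r)$ (resp.\ $\rho = \psi_{1/K}(r)$) and combining with the two inclusions above yields both sides of \eqref{s4eq05}.

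The only subtle point is the lower-bound inclusion: one has to use that $\Phi$ is a \emph{homeomorphism} of $\dd$ (not just a map satisfying the Hersch--Pfluger pointwise bound) so that $\Phi(D(0,r))$ is a Jordan domain containing $0$ whose boundary $\Phi(\{|w|=r\})$ lies outside $D(0,\psi_{1/K}(r))$; this is where the assumption that $\varphi$ is quasiconformal (as opposed to merely quasiregular) is used. The rest of the argument is essentially a change-of-variables bookkeeping.
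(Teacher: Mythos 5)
Your proposal is correct, and its skeleton is the same as the paper's: reduce the area estimate to the double inclusion $D^{ph}(\varphi(z),\psi_{1/K}(r))\subset\varphi(D^{ph}(z,r))\subset D^{ph}(\varphi(z),\psi_K(r))$ via the conjugated map $\Phi=\tau_{\varphi(z)}\circ\varphi\circ\tau_z$ and Hersch--Pfluger, then read off the areas from the Euclidean radius of a pseudohyperbolic disk. The one place you genuinely diverge is the lower inclusion. The paper does not use the left-hand Hersch--Pfluger bound at all there: it applies the already-proved upper inclusion to the inverse map $\varphi^{-1}$ (which is again $K$-quasiconformal) to get $\varphi^{-1}(D^{ph}(\varphi(z),\psi_{1/K}(r)))\subset D^{ph}(z,\psi_K\circ\psi_{1/K}(r))$, and then invokes the semigroup identity $\psi_K\circ\psi_{1/K}=\psi_1=\mathrm{id}$. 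You instead bound $|\Phi|$ from below on the circle $\{|w|=r\}$ and argue topologically that the Jordan domain $\Phi(D(0,r))$, containing $0$ and with boundary curve outside $D(0,\psi_{1/K}(r))$, must contain that disk; this is sound (the open disk $D(0,\psi_{1/K}(r))$ is connected, disjoint from the boundary curve, and meets the inner component at $0$), and you correctly identify that the homeomorphism hypothesis is what makes it work. The trade-off is that the paper's route avoids any Jordan-curve reasoning at the cost of using the semigroup property (\ref{E:semigroup}) and the quasiconformality of $\varphi^{-1}$, while yours uses the full two-sided distortion bound of Lemma \ref{L:HPD} plus a small topological argument. Either is acceptable; no gap.
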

\begin{proof} It suffices to show that for any $z\in\mathbb{D}$ and $0<r<1$,
\begin{equation}\label{E:changephd}
D^{ph}(\varphi(z),\psi_{\frac{1}{K}}(r))\subset \varphi[D^{ph}(z,r)]\subset D^{ph}(\varphi(z),\psi_K(r)).
\end{equation}
Define $\Phi(u)=\tau_{\varphi(z)}\circ \varphi\circ\tau_z(u),$ which is a $K$-quasiconformal mapping with $\Phi(0)=0$.
By Lemma \ref{L:HPD},
$|\Phi(u)|\leq\psi_K(|u|).$
Set $w=\tau_z(u)$, then $|\tau_{\varphi(z)}
(\varphi(w))|\leq\psi_K(|\tau_z^{-1}(w)|).$
Since  $w\in\phd(z, r)$ 
 and $\psi_K$ is  increasing, it follows  $\bigg|\frac{\varphi(z)-\varphi (w)}{1-\overline{\varphi(z)}\varphi(w)}\bigg|\leq\psi_K(|\frac{z-w}{1-\bar{z}w}|)\leq\psi_K(r)$,
which yields the second inclusion.

\bigno For the first inclusion, it is sufficient to show
$\varphi^{-1}[D^{ph}(\varphi(z),\psi_{\frac{1}{K}}(r)]\subset D^{ph}(z,r).$
Since $\varphi^{-1}$ is  $K$-quasiconformal, by the second inclusion of
(\ref{E:changephd}),
\begin{equation*}
\varphi^{-1}[D^{ph}(\varphi(z),\psi_{\frac{1}{K}}(r)]\subset D^{ph}(z,\psi_{K}\circ\psi_{\frac{1}{K}}(r)).
\end{equation*}Using the semigroup property of $\psi_K$, we can conclude that
$\varphi^{-1}(D^{ph}(\varphi(z),\psi_{\frac{1}{K}}(r))\subset D^{ph}(z,r)$.
The proof of Lemma \ref{L:areachangephd} is complete now.
\end{proof}

\noindent Next we introduce another gadget  needed for our proof. It will also play a role 
in Section \ref{S:Pplus}.
\begin{definition}\label{D:aphi}\emph{(\cite{AK2011}, p. 22)}
 Let $\varphi$ be a quasiconformal mapping over $\mathbb{D}$ such that $\varphi(0)=0$. For each $z\in\mathbb{D}$, let $$B_z=\bigg\{w\in\mathbb{D}:|z-w|\leq\frac{1-|z|}{2}\bigg\}.$$ We define
\begin{equation*}\label{s4eq07}
a_\varphi(z) = \exp\bigg[\frac{1}{2|B_z|}\int_{B_z}\log J(w,\varphi)dA(w)\bigg].
\end{equation*}
\end{definition}

\noindent
Two theorems of Astala and Koskela \cite{AK2011} will be needed.

\begin{lemma}\emph{(\cite{AK2011}, Lemma 2.3)}\label{F:AK01}
Let $\varphi$ be a $K$-quasiconformal mapping over $\dd$ with $\varphi(0)=0$. Then there exist positive constants $c_1$ and $c_2$ which depend only on $K$ such that for any $z\in\dd$,
\begin{equation}\label{s4eq08}
c_1 \frac{1-|\varphi(z)|}{1-|z|}\leq a_\varphi(z)\leq c_2
\frac{1-|\varphi(z)|}{1-|z|}.
\end{equation}
\end{lemma}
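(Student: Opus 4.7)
The plan is to reduce the claim to two separate comparisons: (i) that the arithmetic mean $\frac{1}{|B_z|}\int_{B_z} J(w,\varphi)\,dA(w)$ is comparable to $\bigl(\frac{1-|\varphi(z)|}{1-|z|}\bigr)^2$ with constants depending only on $K$, and (ii) that the geometric mean $a_\varphi(z)^2 = \exp\bigl[\frac{1}{|B_z|}\int_{B_z}\log J(w,\varphi)\,dA(w)\bigr]$ is comparable to the arithmetic mean with constants depending only on $K$.

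For (i), I would first invoke the area formula, which applies because $\varphi \in W^{1,2}_{\text{loc}}$ and has the Lusin (N) property: $\int_{B_z} J(w,\varphi)\,dA = |\varphi(B_z)|$. A short direct computation with $\tau_z$ gives the sandwich $D^{ph}(z, 1/5) \subset B_z \subset D^{ph}(z, 1/2)$ valid for every $z \in \dd$. Feeding these two fixed pseudohyperbolic radii into Lemma \ref{L:areachangephd} yields $|\varphi(B_z)| \asymp_K (1-|\varphi(z)|)^2$, while trivially $|B_z| \asymp (1-|z|)^2$. Combining these delivers (i). Jensen's inequality for the concave function $\log$ immediately gives the upper half of (ii), hence the upper bound $a_\varphi(z) \le c_2(K)\frac{1-|\varphi(z)|}{1-|z|}$.

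The main obstacle is the lower half of (ii): the reverse Jensen inequality. This is false for generic positive densities, so the quasiconformal hypothesis must enter in a serious way here. I would appeal to Gehring's higher integrability theorem, which supplies $\epsilon=\epsilon(K)>0$ and $C=C(K)$ with the self-improving reverse H\"older estimate
$$\Bigl(\frac{1}{|B|}\int_B J(w,\varphi)^{1+\epsilon}\,dA(w)\Bigr)^{\!\frac{1}{1+\epsilon}} \le C\cdot \frac{1}{|B|}\int_B J(w,\varphi)\,dA(w)$$
for every Euclidean disk $B$ whose modest dilate stays inside $\dd$. This places $J(\cdot,\varphi)$ in the Muckenhoupt class $A_\infty$ with constants depending only on $K$, and the Coifman--Fefferman reverse Jensen inequality $\frac{1}{|B|}\int_B J \le C(K)\exp\bigl[\frac{1}{|B|}\int_B \log J\bigr]$ follows. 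Applied with $B=B_z$ (using, for instance, $\tfrac{3}{2}B_z\subset \dd$ to legitimately invoke the reverse H\"older estimate near $\partial\dd$), this combines with (i) to give the required lower bound $a_\varphi(z)\ge c_1(K)\frac{1-|\varphi(z)|}{1-|z|}$, completing the proof.
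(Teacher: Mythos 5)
The paper does not prove this lemma at all: it is imported verbatim as Lemma 2.3 of Astala--Koskela \cite{AK2011}, so there is no internal proof to compare yours against. Judged on its own terms, your argument is correct and gives a legitimate self-contained derivation. The decomposition into (i) ``arithmetic mean of $J$ over $B_z$ is comparable to $\bigl(\tfrac{1-|\varphi(z)|}{1-|z|}\bigr)^2$'' and (ii) ``geometric mean is comparable to arithmetic mean'' is the right way to cut the problem, and each half checks out: the sandwich $D^{ph}(z,1/5)\subset B_z\subset D^{ph}(z,1/2)$ is a correct computation (the farthest point of $D^{ph}(z,r)$ from $z$ lies at Euclidean distance $\tfrac{r(1-|z|^2)}{1-r|z|}$, which for $r=1/5$ is at most $\tfrac{1-|z|}{2}$; the outer inclusion has a harmless closed-versus-open boundary issue at $z=0$ that is irrelevant for areas), and feeding fixed radii into Lemma \ref{L:areachangephd} together with the change-of-variables formula $|\varphi(B_z)|=\int_{B_z}J\,dA$ gives (i) with constants depending only on $K$. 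You also correctly identify where the quasiconformality must do real work: Jensen is free in one direction, and the reverse inequality is exactly the $A_\infty$ property of quasiconformal Jacobians, obtained from Gehring's reverse H\"older inequality (with exponent and constant depending only on $K$) plus the Coifman--Fefferman/Hru\v{s}\v{c}\"ev exponential characterization of $A_\infty$; your check that $\tfrac32 B_z\subset\dd$ (and that subcubes of $B_z$ admit the needed dilates, since every point of $B_z$ is at distance at least $\mathrm{rad}(B_z)$ from $\partial\dd$) makes the localization legitimate. An equivalent and commonly cited alternative for step (ii) is Reimann's theorem that $\log J(\cdot,\varphi)$ lies in BMO with norm controlled by $K$, followed by John--Nirenberg; either route yields the same quantitative reverse Jensen inequality. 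In short: nothing to fix, though in the paper's economy of presentation the citation to \cite{AK2011} remains the appropriate choice.
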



\begin{lemma}\emph{(\cite{AK2011}, Theorem 7.3)} \label{F:AK02} Let $\varphi$ be a quasiconformal mapping over $\dd$ with $\varphi(0)=0$. Then the boundary mapping
$\widetilde{\varphi}$ is a Lipschitz function on $\mathbb{T}$ if and only if there exists a constant $c>0$ such that $a_\varphi(z)<c$ for every $z\in\mathbb{D}$.
\end{lemma}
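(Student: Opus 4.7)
The plan is to use Lemma \ref{F:AK01} as the bridge: since $a_\varphi(z) \asymp (1-|\varphi(z)|)/(1-|z|)$ with constants depending only on $K$, the statement reduces to proving that $\widetilde{\varphi}$ is Lipschitz on $\tt$ if and only if $\sup_{z\in\dd}\frac{1-|\varphi(z)|}{1-|z|} < \infty$. Both directions then amount to linking nontangential boundary distance to Euclidean boundary behavior, and the main tool available to us is Hersch-Pfluger's distortion theorem (Lemma \ref{L:HPD}) together with its pseudohyperbolic consequence (Lemma \ref{L:areachangephd}).

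For the direction ($\Leftarrow$), assume $1-|\varphi(z)|\leq C(1-|z|)$ uniformly. Given two boundary points $\xi,\eta\in\tt$ with $|\xi-\eta|=\delta$, place the interior point $z=(1-\delta)\frac{\xi+\eta}{|\xi+\eta|}$, so $1-|z|\asymp\delta$ and both $\xi,\eta$ lie in a Carleson box of scale $\delta$ over $z/|z|$. Apply Lemma \ref{L:areachangephd} with a fixed radius $r\in(0,1)$ to the pseudohyperbolic disk $D^{ph}(z,r)$, whose non-tangential closure touches a full arc of $\tt$ of length comparable to $1-|z|$; push $r\to 1$ along the Stolz approach to obtain $|\widetilde{\varphi}(\xi)-\varphi(z)|\leq C'(1-|\varphi(z)|)$ and likewise for $\eta$. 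Combining with the hypothesis yields $|\widetilde{\varphi}(\xi)-\widetilde{\varphi}(\eta)|\leq C''\delta$, proving the Lipschitz bound.

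For the harder direction ($\Rightarrow$), assume $\widetilde{\varphi}$ is $L$-Lipschitz and fix $z\in\dd$. Let $\xi=z/|z|$ and consider an arc $I\subset\tt$ centered at $\xi$ of length $1-|z|$; by hypothesis $\widetilde{\varphi}(I)$ has diameter at most $L(1-|z|)$. The idea is to transfer this boundary control back to $\varphi(z)$ by conformal renormalization: pre-compose with $\tau_z$ and post-compose with $\tau_{\varphi(z)}$ so that $\Phi:=\tau_{\varphi(z)}\circ\varphi\circ\tau_z$ is a $K$-quasiconformal self-map of $\dd$ fixing $0$, and then apply Lemma \ref{L:HPD} to $\Phi$ and to $\Phi^{-1}$. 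Under this normalization, the arc $\tau_z^{-1}(I)$ subtends a definite fraction of $\tt$, while $\tau_{\varphi(z)}(\widetilde{\varphi}(I))$ must separate $0$ from $\tt\setminus\tau_{\varphi(z)}(\widetilde{\varphi}(I))$; using Hersch-Pfluger or an elementary modulus-of-annulus argument, the diameter of $\tau_{\varphi(z)}(\widetilde{\varphi}(I))$ must therefore be bounded below by a constant depending only on $K$. Translating back, $\mathrm{diam}(\widetilde{\varphi}(I))$ is bounded below by a constant multiple of $1-|\varphi(z)|$, which combined with the upper bound $L(1-|z|)$ yields $1-|\varphi(z)|\leq C(K,L)(1-|z|)$.

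The main obstacle is this second direction: purely boundary data $L$-Lipschitzness must be upgraded to an interior growth estimate, and the natural tools (extremal length, conformal modulus) only work cleanly after the $\tau_z,\tau_{\varphi(z)}$ renormalization that turns the Whitney scale at $z$ into the global scale. Verifying that the arc $\tau_z^{-1}(I)$ is genuinely macroscopic in $\tt$, and matching the resulting lower bound on $\mathrm{diam}\,\tau_{\varphi(z)}(\widetilde{\varphi}(I))$ to a true lower bound on $1-|\varphi(z)|$, requires some care; this is where a modulus-of-annulus computation (comparing the annulus in $\dd$ separating $\widetilde{\varphi}(I)$ from its complementary arc to one centered at $\varphi(z)$) seems unavoidable, and it is the step I would expect to be the most delicate.
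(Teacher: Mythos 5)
You should first be aware that the paper offers no proof of this lemma at all: it is quoted as a black box from Astala--Koskela (\cite{AK2011}, Theorem 7.3), so what you are really attempting is a reconstruction of their argument. Your opening reduction is the right one --- by Lemma \ref{F:AK01} the statement is equivalent to ``$\widetilde{\varphi}$ is Lipschitz on $\tt$ iff $\sup_{z\in\dd}\frac{1-|\varphi(z)|}{1-|z|}<\infty$'' --- and both geometric facts you then aim for are true. But each direction, as written, has a genuine gap.

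In the direction ($\Leftarrow$), the step ``apply Lemma \ref{L:areachangephd} to $D^{ph}(z,r)$ and push $r\to 1$'' cannot yield $|\widetilde{\varphi}(\xi)-\varphi(z)|\leq C'(1-|\varphi(z)|)$: the closure of $D^{ph}(z,r)$ meets $\tt$ only in the limit $r\to 1$, and in that limit $\psi_K(r)\to 1$, so the containment $\varphi(D^{ph}(z,r))\subset D^{ph}(\varphi(z),\psi_K(r))$ degenerates to $\varphi(\dd)\subset\dd$ and carries no information. The correct move is chaining with a \emph{fixed} pseudohyperbolic step: set $z_n=(1-2^{-n}(1-|z|))\xi$, check $z_{n+1}\in D^{ph}(z_n,\tfrac12)$, so $|\varphi(z_n)-\varphi(z_{n+1})|\leq C_K(1-|\varphi(z_n)|)\leq C_K C\,2^{-n}(1-|z|)$ by the hypothesis, and sum the telescoping series to get $|\varphi(z)-\widetilde{\varphi}(\xi)|\leq C'(1-|z|)$. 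Note the bound is in terms of $1-|z|$, not $1-|\varphi(z)|$ as you wrote; the latter would require the same hypothesis for $\varphi^{-1}$, which you do not have --- but $1-|z|$ is what the Lipschitz estimate needs anyway. In the direction ($\Rightarrow$), the crux is the lower bound $\mathrm{diam}\,\widetilde{\varphi}(I)\geq c(K)(1-|\varphi(z)|)$, and here Hersch--Pfluger is not enough: Lemma \ref{L:HPD} controls $|\Phi(u)|$ only at interior points and says nothing about the length of the boundary arc $\widetilde{\Phi}(\tau_z(I))$, and the claim that $\tau_{\varphi(z)}(\widetilde{\varphi}(I))$ ``separates $0$ from $\tt\setminus\tau_{\varphi(z)}(\widetilde{\varphi}(I))$'' is not meaningful for an arc lying on $\tt$. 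What is actually needed is either the quasisymmetry of the boundary extension of a $K$-quasiconformal self-map fixing $0$, with constants depending only on $K$ (obtained, e.g., by reflecting across $\tt$ and invoking the quasisymmetry of global quasiconformal maps), or a Loewner-type modulus estimate: the curve family joining the radial segment $[z,\xi)$ to $\tt\setminus I$ has modulus bounded above by a universal constant, its image joins the continuum $\varphi([z,\xi))$, of diameter at least $1-|\varphi(z)|$, to $\tt\setminus\widetilde{\varphi}(I)$ at distance at most $\mathrm{diam}\,\widetilde{\varphi}(I)$, and quasi-invariance of the modulus plus the Loewner lower bound for $\dd$ then forces $\mathrm{diam}\,\widetilde{\varphi}(I)\geq c(K)(1-|\varphi(z)|)$. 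You correctly identify this as the delicate step, but in the proposal it is asserted rather than proved, so the harder implication remains open.
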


\noindent Now we are ready to give the proof of Theorem \ref{T:boundforC}. We actually show the following.
\begin{theorem}\label{T:boundforCC}
Let $\varphi$ be a $K$-quasiconformal mapping over the unit disk $\dd$ with $\varphi(0)=0$. Let $C_\varphi(f)=f\circ \varphi$, initially defined as a map from $H^\infty(\dd)$ to $L^\infty(\dd)$.
\begin{itemize} \item[\emph{(1)}.] The following are equivalent:
\begin{itemize}
\item[\emph{(i)}] The map $f \mapsto f\circ \varphi$ extends to a bounded operator $C_\varphi: L^2_a(\mathbb{D})\to L^2(\mathbb{D})$;
\item[\emph{(ii)}] The boundary mapping $\widetilde{\varphi}^{-1}: \tt \to \tt$ is  Lipschitz;
\item[\emph{(iii)}] $
\mathop{\sup}\limits_{z\in\mathbb{D}} m_\varphi(z)<\infty$, where
$m_\varphi(z)=\frac{1-|z|}{1-|\varphi(z)|}, \quad z \in \dd.$
\end{itemize}
\item[\emph{(2)}.] When $C_\varphi: L^2_a(\dd) \to L^2(\dd)$ is bounded, we have the following estimate of its norm:
$$ \|C_\varphi\|_{\bg\to L^2(\dd)}\leq \frac{c}{1-\psi_K^2(\frac{1}{2})} \sup_{z\in\dd} m_\varphi(z)$$
where $c$ is a universal constant.
\item[\emph{(3)}.] We also have a lower estimate $ c_1\sup_{z\in\mathbb{D}} m_\varphi(z)\leq \|C_\varphi\|_{\bg\to L^2(\dd)},$
where $c_1=c_1(K)$ is a positive constant depending only on $K$.
\end{itemize}
\end{theorem}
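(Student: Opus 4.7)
The plan is to establish (1), (2), and (3) in the order: (ii) $\iff$ (iii) via the Astala--Koskela lemmas applied to $\varphi^{-1}$; (iii) $\Rightarrow$ (i) together with the upper bound (2) via a sub--mean value plus Fubini argument; and (i) $\Rightarrow$ (iii) together with the lower bound (3) via testing on normalized reproducing kernels. For (ii) $\iff$ (iii), I would apply Lemma \ref{F:AK02} to $\varphi^{-1}$, which is again $K$-quasiconformal and fixes the origin; this gives that $\widetilde{\varphi}^{-1}$ is Lipschitz on $\mathbb{T}$ iff $a_{\varphi^{-1}}$ is bounded on $\mathbb{D}$. Then Lemma \ref{F:AK01} applied to $\varphi^{-1}$ yields $a_{\varphi^{-1}}(w) \asymp (1-|\varphi^{-1}(w)|)/(1-|w|)$, and substituting $w = \varphi(z)$ identifies this ratio with $m_\varphi(z)$.

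For (iii) $\Rightarrow$ (i) with the upper bound (2), I would use that $D^{ph}(a, 1/2)$ is a Euclidean disk of radius comparable to $1-|a|^2$, so the sub--mean value inequality for analytic $f$ gives $|f(a)|^2 \leq c(1-|a|^2)^{-2}\int_{D^{ph}(a, 1/2)}|f(w)|^2\,dA(w)$. Applying this at $a=\varphi(z)$, integrating in $z \in \mathbb{D}$, swapping the order of integration by Fubini, and using the standard comparability $1-|\varphi(z)|^2 \asymp 1-|w|^2$ whenever $\varphi(z) \in D^{ph}(w, 1/2)$, I obtain
\[
\|C_\varphi f\|_{L^2(\mathbb{D})}^2 \;\leq\; c\int_{\mathbb{D}}|f(w)|^2\,\frac{|\varphi^{-1}(D^{ph}(w,1/2))|}{(1-|w|^2)^2}\,dA(w).
\]
The upper half of Lemma \ref{L:areachangephd} applied to $\varphi^{-1}$ bounds $|\varphi^{-1}(D^{ph}(w, 1/2))|$ above by $\psi_K^2(1/2)(1-|\varphi^{-1}(w)|^2)^2/(1-\psi_K^2(1/2))^2$, and since $(1-|\varphi^{-1}(w)|)/(1-|w|) = m_\varphi(\varphi^{-1}(w))$, the estimate in (2) falls out with the $K$-dependence concentrated in the single factor $(1-\psi_K^2(1/2))^{-1}$.

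For (i) $\Rightarrow$ (iii) with the lower bound (3), I would test $C_\varphi$ on the normalized Bergman reproducing kernels $k_w(z) = (1-|w|^2)/(1-\bar{w}z)^2$, which satisfy $\|k_w\|_{L_a^2(\mathbb{D})} = 1$. On the preimage set $\varphi^{-1}(D^{ph}(w, 1/2))$ the standard estimate $|1-\bar{w}\varphi(z)|\asymp 1-|w|^2$ gives $|k_w(\varphi(z))|^2 \asymp (1-|w|^2)^{-2}$, so
\[
\|C_\varphi k_w\|_{L^2(\mathbb{D})}^2 \;\gtrsim\; \frac{|\varphi^{-1}(D^{ph}(w, 1/2))|}{(1-|w|^2)^2}.
\]
The lower half of Lemma \ref{L:areachangephd} applied to $\varphi^{-1}$ furnishes $|\varphi^{-1}(D^{ph}(w, 1/2))| \geq c(K)(1-|\varphi^{-1}(w)|^2)^2$, whence $\|C_\varphi k_w\| \gtrsim m_\varphi(\varphi^{-1}(w))$; taking the supremum as $w$ ranges over $\mathbb{D}$ (equivalently, as $z = \varphi^{-1}(w)$ ranges over $\mathbb{D}$) produces (3), and combining with the already established equivalence (ii) $\iff$ (iii) also delivers (i) $\Rightarrow$ (iii).

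The main technical obstacle I expect is clean bookkeeping of constants in the upper bound, specifically verifying that the comparability $1-|\varphi(z)|^2 \asymp 1-|w|^2$ on $D^{ph}(w,1/2)$ and the submean constant contribute only universal factors, so that the $K$-dependence is genuinely confined to the single factor $(1-\psi_K^2(1/2))^{-1}$ from Lemma \ref{L:areachangephd}. A secondary care point is to note that $\varphi^{-1}$ inherits the hypotheses of Lemmas \ref{F:AK01}, \ref{F:AK02}, and \ref{L:areachangephd} from $\varphi$, which is automatic because quasiconformality is preserved under inversion and $\varphi(0) = 0$ forces $\varphi^{-1}(0) = 0$.
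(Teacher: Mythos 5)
Your proposal is correct, and two of its three pieces coincide with the paper's argument: the equivalence (ii) $\Leftrightarrow$ (iii) is obtained exactly as in the paper by applying Lemmas \ref{F:AK01} and \ref{F:AK02} to $\varphi^{-1}$, and your upper bound is the paper's computation in different clothing --- the paper routes the sub--mean value plus Fubini argument through the Toeplitz operators $T_{\widehat{\varphi}_r}$ and $T_J=C_\varphi^*C_\varphi$ with symbol $J(\cdot,\varphi^{-1})$, but the substance (subharmonic estimate on $D^{ph}$, change of order of integration, upper half of Lemma \ref{L:areachangephd}) is identical, and your direct Fubini phrasing is if anything cleaner. Where you genuinely diverge is the lower bound: the paper invokes the Iwaniec--Nolder sub--mean value property for quasiregular mappings (Lemma \ref{L:INthm01}) to get the pointwise bound $|f\circ\varphi(z_0)|\leq c(K)(1-|z_0|)^{-1}\|f\circ\varphi\|_{L^2}$ and then evaluates the normalized kernel at the single point $z_0$, whereas you integrate $|k_w\circ\varphi|^2$ over the preimage set $\varphi^{-1}(D^{ph}(w,\frac{1}{2}))$ and use the \emph{lower} half of Lemma \ref{L:areachangephd} together with the universal comparability $|1-\bar{w}u|\asymp 1-|w|^2$ on $D^{ph}(w,\frac{1}{2})$. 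Your route buys a more self-contained proof (both bounds rest on the single area-distortion lemma, and no external quasiregular subharmonicity result is needed), at the cost of a constant $c_1(K)=c\,\psi_{1/K}(\frac{1}{2})$ whose quantitative dependence on $K$ is less explicit than the Iwaniec--Nolder constant; both yield a constant depending only on $K$, which is all the theorem claims. Your bookkeeping concerns are the right ones and all resolve favorably: the sub--mean value constant on $D^{ph}(a,\frac{1}{2})$ and the comparabilities $1-|u|^2\asymp 1-|w|^2\asymp|1-\bar{w}u|$ for $u\in D^{ph}(w,\frac{1}{2})$ are universal, so the $K$-dependence in the upper bound indeed sits only in the factor $(1-\psi_K^2(\frac{1}{2}))^{-1}$ coming from bounding $(1-\psi_K(\frac{1}{2})^2|\varphi^{-1}(w)|^2)^{-2}$ by $(1-\psi_K^2(\frac{1}{2}))^{-2}$ and absorbing $\psi_K(\frac{1}{2})\leq 1$.
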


\bigno As hinted in the introduction, the sharp constants are unknown but certainly attractive.

\begin{proof}
For $0<r<1$, $z\in \dd$, we introduce a new function
$
\widehat{\varphi}_r(z)=\frac{|\varphi^{-1}(\phd(z,r))|}{|\phd(z,r)|}
$
and consider the Toeplitz operator $T_{\widehat{\varphi}_r}$. By Lemma \ref{L:areachangephd}, $\|T_{\widehat{\varphi}_r}\|_{\bg\to \bg}\leq 4c_1\sup_{z\in\dd}m_\varphi(z)^2,$ where $c_1=\frac{\psi_K^2(r)}{r^2(1-\psi_K(r)^2)^2}.$
We recall a basic estimate for $f\in\bg$ (\cite{DS2003}, Lemma 13).
For $0<t<1$, set $r=\frac{1+t}{2}$. Then for any $z\in\dd$, we have
$$
|f(z)|^2\leq\frac{4(1-r)^{-4}}{|\phd(z,r)|}\int_{\phd(z,r)}|f(w)|^2dA(w).
$$
Moreover, for each $w\in D^{ph}(z,r)$,
$$
\frac{(1-r|z|)^2}{(1-|z|^2)^2}\leq |\frac{1}{(1-\bar{z}w)^2}|\leq\frac{(1+r|z|)^2}{(1-|z|^2)^2}.
$$
 Let $J(w)=J(w,\varphi^{-1})$ be the Jacobian of $\varphi^{-1}$ and $T_J$ be the Toeplitz operator on $\bg$ with symbol $J$. We set $c_{2}=4(1-r)^{-4}$ and $c_{3}=(1-r)^2$. Then 

%
\begin{eqnarray*}
\langle T_{J}f,f\rangle
&\leq&  \frac{4c_{2}}{c_{3}r^2}\int_{\dd}\int_{\phd(z,r)}\frac{|f(w)|^2}{|1-z\bar{w}|^2}
dA(w)J(z, \varphi^{-1})dA(z)\\
&\leq&\frac{16c_{2}}{c_{3}(1-r^2)^2}\int_{\dd}\int_{\phd(z,r)}
\frac{|f(w)|^2}{|\phd(w,r)|}dA(w)J(z, \varphi^{-1})dA(z)\\
&=&  \frac{16c_{2}}{c_{3}(1-r^2)^2}\langle T_{\widehat{\varphi}_r}f, f\rangle.
\end{eqnarray*}

\medno It follows that
\begin{equation*}
\|T_J\|_{\bg\to \bg}\leq \frac{16c_{2}}{c_{3}(1-r^2)^2}\|T_{\widehat{\varphi}_r}\|_{\bg\to \bg}
\leq \frac{64c_{1}c_{2}}{c_{3}(1-r^2)^2}\sup_{z\in\dd}m_\varphi(z)^2,
\end{equation*}
for $\frac{1}{2}<r<1$. Since $C^*_\varphi C_\varphi=T_J$,
\begin{equation*}
\|C_\varphi\|_{\bg\to L^2(\dd)} \leq\frac{c_4}{r(1-r)^3(1-r^2)(1-\psi_{K}^2(r))}
\sup_{z\in\dd}m_\varphi(z).
\end{equation*}
Let $r$ approach  $\frac{1}{2}$, there is a constant $c$ such that
$\|C_\varphi\|_{\bg\to L^2(\dd)}\leq\frac{c}{1-\psi_K^2(\frac{1}{2})} \mathop{\sup}\limits_{z\in\dd
m_\varphi}(z).$

\bigno Next, for the lower bound, we need   a subharmonic property of quasiregular mappings, due to Iwaniec and Nolder  \cite{IN1985}, \cite{Nolder1990}, which we record below for the convenience of the reader.

\begin{lemma}\label{L:INthm01}
Let $\Omega$ be a domain in the plane, and let $z$ be the center of a cube $Q$ such that the closure $\overline{Q}$ is contained in $\Omega$. Let $\varphi:\Omega\to \mathbb{C}$ be a $K$-quasiregular mapping. Then for any $0<p<\infty$,
\begin{equation*}
|\varphi(z)|\leq c\bigg(\frac{1}{|Q|}\int_{Q}|\varphi(w)|^pdA(w)\bigg)^{\frac{1}{p}},
\end{equation*}
where the constant $c$ depends on $p$ and $K$.
\end{lemma}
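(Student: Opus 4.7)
\medno
\textbf{Proof proposal for Lemma \ref{L:INthm01}.} The plan is to reduce the stated mean-value estimate to the classical sub-mean-value property of subharmonic functions. The key ingredient is Reshetnyak's theorem: for any non-constant $K$-quasiregular mapping $\varphi$ on a planar domain $\Omega$, the function $\log|\varphi|$ is subharmonic on $\Omega$ (with the convention $\log 0 = -\infty$). This is a cornerstone of quasiregular mapping theory; in dimension two it follows from the Sto\"ilow factorization $\varphi = g\circ\chi$, where $\chi$ is a quasiconformal homeomorphism and $g$ is holomorphic, together with the classical subharmonicity of $\log|g|$.

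Granting this, the estimate is a three-line computation. Let $D$ be the open disk inscribed in $Q$ and centered at $z$; then $|D|/|Q|=\pi/4$. The sub-mean-value property applied to $\log|\varphi|$ over $D$ gives
\begin{equation*}
\log|\varphi(z)| \leq \frac{1}{|D|}\int_D \log|\varphi(w)|\,dA(w).
\end{equation*}
Multiplying by $p>0$ and applying Jensen's inequality to the convex exponential yields
\begin{equation*}
|\varphi(z)|^p = \exp\bigl(p\log|\varphi(z)|\bigr) \leq \exp\left(\frac{1}{|D|}\int_D \log|\varphi|^p\,dA\right) \leq \frac{1}{|D|}\int_D |\varphi(w)|^p\,dA(w).
\end{equation*}
Replacing the integral over $D$ by the one over $Q\supset D$ costs a factor $|Q|/|D|=4/\pi$, and taking $p$-th roots delivers the lemma with constant $c=(4/\pi)^{1/p}$ (independent of $K$).

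The main obstacle is really not in the short computation above but in justifying Reshetnyak's subharmonicity result, which relies on nontrivial structural theory of quasiregular mappings. If one prefers to avoid that black box, an alternative route goes through a Caccioppoli-type inequality for quasiregular mappings combined with Gehring's reverse-H\"older self-improvement lemma: this gives a local reverse-H\"older bound of the form
\begin{equation*}
\left(\frac{1}{|Q|}\int_Q |\varphi|^{p+\epsilon}\,dA\right)^{\frac{1}{p+\epsilon}} \leq c(K,p)\left(\frac{1}{|2Q|}\int_{2Q}|\varphi|^p\,dA\right)^{\frac{1}{p}},
\end{equation*}
which a standard Moser-type iteration on the exponent upgrades to the pointwise bound in the lemma, now with a constant that depends genuinely on both $p$ and $K$. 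Either way, once the subharmonicity (or its reverse-H\"older surrogate) is granted, the rest is essentially bookkeeping.
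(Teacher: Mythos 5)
First, a point of comparison: the paper does not prove Lemma \ref{L:INthm01} at all --- it records it as a known result of Iwaniec--Nolder \cite{IN1985} and Nolder \cite{Nolder1990}. So the relevant comparison is with the proofs in those references, which run through the PDE machinery you gesture at in your ``alternative route,'' not through classical subharmonicity.

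Your main argument has a genuine gap at its first step: $\log|\varphi|$ is \emph{not} subharmonic for a general planar $K$-quasiregular map. The Stoilow factorization $\varphi=g\circ\chi$ gives $\log|\varphi|=(\log|g|)\circ\chi$, and subharmonicity survives holomorphic changes of variable but not quasiconformal ones. Concretely, take $\chi(x+iy)=x-\epsilon\rho(x,y)+iy$ for a smooth compactly supported bump $\rho$ and small $\epsilon>0$; this is a quasiconformal homeomorphism, $\varphi=e^{\chi}=\exp\circ\,\chi$ is quasiregular, and $\log|\varphi|=\mathrm{Re}\,\chi=x-\epsilon\rho$ has Laplacian $-\epsilon\Delta\rho<0$ wherever $\Delta\rho>0$, so it is not subharmonic there. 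A telltale symptom of the error is your conclusion that $c=(4/\pi)^{1/p}$ is independent of $K$: the genuine estimate carries a constant that depends on $K$, and one should expect this, since a quasiconformal precomposition is only H\"older continuous with exponent $1/K$ and can therefore separate the value at the center from the bulk of the averaging region more and more as $K$ grows. What Reshetnyak's theorem actually provides is that $\log|\varphi|$ (and suitable powers of $|\varphi|$) is a subsolution of a divergence-form equation $\mathrm{div}(A(z)\nabla u)=0$ whose ellipticity constants depend on $K$; the pointwise bound is then the local boundedness estimate for such subsolutions via De Giorgi--Nash--Moser. That is essentially your second route, and it is the correct one, but as written it is only a sketch: the Caccioppoli inequality you need is the one for $|\varphi|$ as a subsolution of that elliptic equation (not the usual one bounding $D\varphi$), Gehring's self-improvement lemma is not the relevant mechanism here, and the iteration must be run on shrinking cubes with a multiplicative gain in the exponent coming from the Sobolev embedding so that the product of constants converges. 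Either fill in that Moser iteration or simply cite \cite{IN1985} as the paper does.
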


\noindent For  $f\in L_a^2(\mathbb{D})$, $f\circ \varphi$ is $K$-quasiregular. By Lemma \ref{L:INthm01}, there exists a constant $c=c(K)$ such that for all $z\in\mathbb{D}$,
$
|f\circ \varphi(z)|\leq\frac{c}{{1-|z|}}\bigg(\int_{\mathbb{D}}|f\circ \varphi(w)|^2dA(w)\bigg)^{\frac{1}{2}}.
$
Now fix $z_0\in\mathbb{D}$ and choose
$f(w)=\frac{1-|\varphi(z_0)|^2}{(1-\overline{\varphi(z_0)}w)^2} $ for   $w\in\dd.$
It satisfies $\|f\|_{L^2(\dd)}=1$.
Also $C_\varphi f(z_0)=\frac{1}{1-|\varphi(z_0)|^2}$.
Thus,
\begin{equation*}
\frac{c}{1-|z_0|}\bigg(\int_{\mathbb{D}}|f\circ \varphi(w)|^2dA(w)\bigg)^{\frac{1}{2}}\geq \frac{1}{1-|\varphi(z_0)|^2}.
\end{equation*}
Thus
\begin{equation*}
\|C_\varphi\|_{\bg\to L^2(\dd)}\geq\frac{1}{2c}\frac{1-|z_0|}{1-|\varphi(z_0)|}.
\end{equation*}
It follows that
$\|C_\varphi\| \geq \frac{1}{2c} \mathop{\sup}\limits_{z\in\mathbb{D}}\frac{1-|z|}{1-|\varphi(z)|}$. Now we know  that $C_\varphi$ is bounded from the Bergman space $\bg$ into $L^2(\dd)$ if and only if $\mathop{\sup}\limits_{z\in\dd}m_\varphi(z)<\infty$. It follows $\textrm{(i)}  \Leftrightarrow \textrm{(iii)}$ in Part (1).
Observe that
$\mathop{\sup}\limits_{z\in \dd} m_\varphi(z)<\infty$
if and only if
$
\mathop{\sup}\limits_{z\in \dd}\frac{1-|\varphi^{-1}(z)|}{1-|z|}<\infty.
$
By Lemma \ref{F:AK01}, $\mathop{\sup}\limits_{z\in \dd} m_\varphi(z)<\infty$ if and only if $\mathop{\sup}\limits_{z\in\mathbb{D}}a_{\varphi^{-1}}<\infty$. From Lemma \ref{F:AK02}, it follows that $\mathop{\sup}\limits_{z\in\dd} m_\varphi(z)<\infty$ if and only if $\tilde{\varphi}^{-1}$ is Lipschitz on $\mathbb{T}$. The proof of Theorem \ref{T:boundforCC} is complete now.
\end{proof}

{\bignobf{Remarks.} (1) By  classical results (\cite{zhu2007}, p. 172) on Carleson measures on $L^p_a(\dd)$ for $0<p<\infty$, we now know that $C_\varphi:L^p_a(\cc)\to L^p(\dd)$ is bounded if and only if $\tilde{\varphi}^{-1}$ is Lipschitz on $\tt$.

\bigno (2) By Harnack's inequality and Lemma \ref{L:HPD}, one can obtain the following quasiconformal version of the Littlewood subordination principle, which is unfortunately not enough to yield the desired $L^2$-boundedness of $C_\varphi$. We record it below, without proof, for interested readers.
Let $\varphi$ be a $K$-quasiconformal over $\dd$ such that $\varphi(0)=0$, and let $f\geq 0$ be a subharmonic function on $\mathbb{D}$. Then for any $r\in(0, 1)$,
\begin{equation}
\frac{1}{2\pi}\int_0^{2\pi}f(\varphi(re^{i\theta}))d\theta\leq \frac{4}{1-\psi_K(r)} \sup_{0<t<1}\frac{1}{2\pi}\int_0^{2\pi}f(te^{i\theta})d\theta,
\end{equation}
where $\psi_K$ is the extremal distortion function.}

\bigno Next we give the proof of Theorem \ref{T:compactforC}.

\begin{proof} Recall the essential norm  $\|C_\varphi\|_e= \inf_K \|C_\varphi-\textit{K}\|_{\bg\to L^2(\dd)},$ where $\mathit{K}: \bg\to L^2(\dd)$ is compact.
We shall need an integrability criteria for the Jacobians of quasiconformal mappings (\cite{astala1994}, Corollary 1.2). Again we record it for the convenience of the reader.

\begin{lemma}\label{L:JacInt}
Let $\varphi$ be a $K$-quasiconformal mapping over $\dd$ with $\varphi(0)=0$. Then    $J(z,\varphi)\in L^q(\mathbb{D})$ for   $ q\in [0, \frac{K}{K-1})$.
\end{lemma}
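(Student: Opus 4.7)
\textbf{Proof proposal for Lemma \ref{L:JacInt}.} The plan is to follow Astala's area-distortion approach, which yields the sharp exponent $K/(K-1)$ directly. I would first establish the area distortion inequality: for every $K$-quasiconformal self-map $\varphi$ of $\dd$ fixing the origin and every Borel set $E\subset\dd$,
\[
|\varphi(E)|\;\leq\; M\,|E|^{1/K},
\]
with $M$ a universal constant. The standard tool is holomorphic motion: extend $\varphi$ to a $K$-quasiconformal map of $\cc$ (reflecting across $\tt$ and renormalizing) and embed it in a family $\{\varphi_t\}_{|t|<1}$ whose Beltrami coefficient $\mu_t$ depends holomorphically on $t$, with $\varphi_0=\mathrm{id}$ and $\|\mu_t\|_\infty=|t|$ scaled so that $\varphi_k=\varphi$ for $k=(K-1)/(K+1)$. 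The $\lambda$-lemma for holomorphic motions makes $t\mapsto \varphi_t(z)$ jointly holomorphic in $z$ and $t$; a Hadamard/three-lines argument applied to a suitable subharmonic majorant of $t\mapsto \log|\varphi_t(E)|$ then yields the stated area distortion with the sharp exponent $1/K=(1-k)/(1+k)$.

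The next step converts the area bound into Jacobian integrability via a layer-cake argument. Setting $E_s=\{z\in\dd:J(z,\varphi)>s\}$ and using that $\varphi$ is a homeomorphism to get the lower bound
\[
|\varphi(E_s)|\;\geq\;\int_{E_s}J(z,\varphi)\,dA(z)\;\geq\;s\,|E_s|,
\]
area distortion gives $s|E_s|\leq M|E_s|^{1/K}$, hence $|E_s|\leq C\,s^{-K/(K-1)}$. Thus $J(\cdot,\varphi)$ lies in weak-$L^{K/(K-1)}(\dd)$, and for any $q<K/(K-1)$ the tail integral $\int_0^\infty s^{q-1}|E_s|\,ds$ converges, giving $J(\cdot,\varphi)\in L^q(\dd)$ as desired.

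The main obstacle is the area distortion inequality itself, whose proof is delicate: one must check that the right functional of $t$ is subharmonic (which uses plurisubharmonicity properties of areas of holomorphic motions, in turn resting on the measurable Riemann mapping theorem and the $\lambda$-lemma), and one must track the sharp constant to recover the exponent $1/K$ rather than a weaker Gehring-type higher integrability. An alternative approach that I considered but would not pursue first recasts the Beltrami equation $\partial_{\bar z}\varphi=\mu\,\partial_z\varphi$ as the fixed-point equation $(I-\mu S)\partial_{\bar z}\varphi=\mu$ via the Beurling-Ahlfors transform $S$ and the identity $\partial_z\varphi=S(\partial_{\bar z}\varphi)+1$ (after suitable normalization at $\infty$). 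The Neumann series converges in $L^p$ provided $\|\mu S\|_p<1$, which via the conjectural sharp bound $\|S\|_p=p^*-1$ would give the same exponent $K/(K-1)$ through $J\leq|\partial_z\varphi|^2$; however the required sharp harmonic-analytic input remains hard, making Astala's original holomorphic-motion route preferable.
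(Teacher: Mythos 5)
Your argument is correct, but the paper does not actually prove this lemma: it simply records it as Corollary 1.2 of Astala's area-distortion paper \cite{astala1994}, and what you have written out --- the sharp area distortion bound $|\varphi(E)|\leq M|E|^{1/K}$ via holomorphic motions, followed by the layer-cake passage from $s|E_s|\leq\int_{E_s}J(z,\varphi)\,dA(z)\leq|\varphi(E_s)|\leq M|E_s|^{1/K}$ to the weak-type bound $|E_s|\leq Cs^{-K/(K-1)}$ and hence to $L^q(\mathbb{D})$ for $q<\frac{K}{K-1}$ --- is precisely Astala's own proof of that corollary. So your route coincides with the cited source; in the context of this paper one would simply cite the (deep) area distortion theorem rather than reprove it.
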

\noindent We shall  take  $q=\frac{K+1}{K}$.
Next we introduce a family of compact operators associated with compact subsets of $\dd$. Let $E$ be a compact subset of $\dd$, and let $M_E f= f\chi_{E}$ be a multiplication operator on $L^2(\dd)$. We define  $X_E= M_E\cdot C_\varphi.$

\mednobf{Claim A. } $X_E$ is compact on $\bg$ for each compact subset $E$ of $\dd$.

\bigno
It is sufficient to show that if
$\{f_n\}\subset L_a^2(\mathbb{D})$ uniformly converges to $0$ on each compact subset in $\mathbb{D}$, then $\|X_E f_n\|_{L^2(\dd)}$ converges to 0 as $n \to \infty$ \cite{zhu2007}. By Lemma \ref{L:JacInt}, there is a constant $c$ depending only on $\varphi$ such that
\begin{equation*}
\int_{\mathbb{D}}|f_n\circ \varphi(z)|^2\chi_{E}(z)dA(z)\leq c \bigg [\int_{\varphi(E)}|f_n(w)|^{2(K+1)}dA(w)\bigg ]^\frac{1}{K+1}.
\end{equation*}

\bigno Then, we have $\lim_{n\to\infty}\|X_Ef_n\|^2_{L^2(\dd)}=0$.

\medno Next we introduce another family of operators $\{T_r\}_{r \in (0, 1)}$. For any $r\in(0, 1)$, let $F_r\subset\dd$ be such that $\varphi(F_r)=A_r \triangleq \{z\in\dd:|z|>r\}.$
Let $E_r=\dd-F_r$ and consider the compact operator $X_{E_r}$. Then we define $T_r$ on $\bg$ by $(T_rf)=C_\varphi f-X_{E_r} f$.

\bignobf{Claim B.} There is a constant $c$ such that $$\mathop{\inf}\limits_{0<r<1}\|T_r\|\leq \frac{c\psi_K(\frac{1}{2})}{1-\psi_K^2(\frac{1}{2})}\mathop{\lim}\limits_{t\to  1}\mathop{\sup}\limits_{|z|>t}m_\varphi(z).$$

\bigno
By a known covering lemma (\cite{axlers}, Lemma 3.5), there exists a sequence $\{\xi_n\}_{n=0}^\infty \subset\dd$ such that $\mathop{\lim}\limits_{n\to \infty}|\xi_n|=1$ and
$\mathbb{D}=\mathop{\cup}\limits_{n=0}^\infty D^{ph}(\xi_n,\frac{1}{2}).$
 Moreover, each $z\in\mathbb{D}$ is contained in at most $c_1$  pseudohyperbolic disks among $\{D^{ph}(\xi_n,\frac{3}{4})\}$.
Owing to a standard subharmonic estimate \cite{DS2003}, for $f\in L_a^2(\mathbb{D})$ and any $z\in D^{ph}(z_0,\frac{1}{2}), z_0\in\dd$, we have
\begin{equation}\label{s7eq01}
|f(z)|^2\leq \frac{c_2}{|D^{ph}(z_0,\frac{3}{4})|}
\int_{D^{ph}(z_0,\frac{3}{4})}|f(w)|^2dA(w).
\end{equation}
In order to proceed with the proof of Claim B, we need to verify the following

\bignobf{Claim C.} For any $r \in (0, 1)$ close to $1$, we can select a subsequence  $\{\xi_{n_m^{(r)}}\}_{m=0}^\infty\subset\{\xi_n\}_{n=0}^\infty$ such that for any $m\geq 0$
\begin{itemize}
  \item [(1)] $\phd(\xi_{n_m^{(r)}},\frac{1}{2})\cap A_r\not=\emptyset$;
  \item [(2)] $A_r\subset \mathop{\cup}\limits_{m=0}^\infty \phd(\xi_{n_m^{(r)}},\frac{1}{2})$;
  \item [(3)] $|\xi_{n_{m^{(r)}}}|\geq t_r$, where $t_r=\min\{\frac{-2+3r+2r^2}{4-r^2}, r\}$.
\end{itemize}


\noindent  We look at those $\xi_n$ such that $\phd(\xi_n,\frac{1}{2})\cap A_r\not=\emptyset$. We re-label these $\xi_n$ as $\{\xi_{n^{(r)}_m}\}_{m=0}^\infty$. Then (1) and  (2) are clearly satisfied. The main point is to verify (3). Clearly, if $|\xi_{n^{(r)}_m}| \ge r$, then there is nothing to prove. Next we re-label the rest of the sequence as
$\{\xi_{n^{(r)}_{m_j}}\}_{j=0}^\infty $ with $|\xi_{n^{(r)}_{m_j}}| \le r.$
%
  For any $j\geq 0$, since the pseudohyperbolic distance between each $\xi_{n_{m_j}}^{(r)}$ and $\{z:|z|=r\}$ is at most $\frac{1}{2}$, there exists a point $z_{n_{m_j}^{(r)}}'\in \tt_r$
  such that $\xi_{n_{m_j}^{(r)}}\in \phd(z_{n_{m_j}^{(r)}}', \frac{1}{2}),$ where $\tt_r\doteq \{z\in\dd:|z|=r\}$.
 Observe that for any $j\geq 0$ the modulus of the Euclidean center of $\phd(\xi_{n_{m_j}^{(r)}},\frac{1}{2})$ is equal to $\frac{3r}{4-r^2}$, and its
Euclidean radius is $\frac{2(1-r^2)}{4-r^2}.$
 Then $|\xi_{n_{m_j}^{(r)}} |\geq \frac{-2+3r+2r^2}{4-r^2}$ for any $j\geq 0$.  We have
 for any $m\geq 0$, $|\xi_{n_m^{(r)}}|\geq t_r \triangleq \min\{\frac{-2+3r+2r^2}{4-r^2}, r\}$.

\bigno
Take $f\in\bg,\|f\|_{\bg}\leq 1$, by (\ref{s7eq01}) and Lemma \ref{L:areachangephd}.
then
\begin{eqnarray*}
\int_{|z|>r}|f(z)|^2J(z,\varphi^{-1})dA(z)&\leq&\sum_{m=0}^\infty\int_{D^{ph}
(\xi_{n_m^{(r)}},\frac{1}{2})}|f(z)|^2J(z,\varphi^{-1})dA(z)\\
&\leq&\frac{c_3\psi_K^2(\frac{1}{2})}{(1-\psi_K^2(\frac{1}{2}))^2} \\
&\cdot&\sup_{|z|>t_r}\bigg[\frac{1-|\varphi^{-1}(z)|}{1-|z|}\bigg]^2
\sum_{m=0}^\infty \int_{D^{ph}(\xi_{n_m^{(r)}},\frac{3}{4})}|f_n(z)|^2dA(z)\\
&\leq& \frac{c_4 \psi_K^2(\frac{1}{2})}{(1-\psi_K^2(\frac{1}{2}))^2} \sup_{|z|>t_r}\bigg[\frac{1-|\varphi^{-1}(z)|}{1-|z|}\bigg]^2.
\end{eqnarray*}

\bigno It follows that $\|T_r\|_{\bg\to L^2(\dd)}\leq \frac{c\psi_K(\frac{1}{2})}{1-\psi_K^2(\frac{1}{2})}
\sup_{\varphi^{-1}(A_{t_r})}\frac{1-|w|}{1-|\varphi(w)|}.$
By Lemma \ref{L:HPD},
 $$\|T_r\|_{\bg\to L^2(\dd)}\leq \frac{c\psi_K(\frac{1}{2})}{1-\psi_K^2(\frac{1}{2})}
\sup_{|z|>\psi_{\frac{1}{K}}(t_r)}m_\varphi(z).$$
Observe that $t_r$ converges to 1 as $r\to 1$, so does $\varphi_{\frac{1}{K}}(t_r)\to 1$.
 In summary so far, we have
\begin{equation*}\label{s6eq01}
\|C_\varphi\|_e\leq \frac{c\psi_K(\frac{1}{2})}{1-\psi_K^2(\frac{1}{2})}\lim_{t\to 1}\sup_{|z|>t}\frac{1-|z|}{1-|\varphi(z)|}.
\end{equation*}

\bigno Next let $\{z_n\in\dd\}_{n=1}^\infty$ be any sequence such that  $|z_n|\to 1$. Then we define a sequence of functions on $\dd$ by
$
f_n(z)=\frac{1-|\varphi(z_n)|^2}{(1-\overline{\varphi(z_n)}z)^2}.
$
This is a sequence of unit vectors in $\bg$ and it weakly converges to $0$.
For any  compact operator $\textit{K}$,
$$ \mathop{\limsup}\limits_{n\to\infty}\|C_\varphi f_n-\textit{K} f_n\|_{L^2(\dd)} =  \mathop{\limsup}\limits_{n\to\infty}\|C_\varphi f_n\|_{L^2(\dd)}.$$

\medno {For any $n\geq 1$, $f_n\circ \varphi$ is $K$-quasiregular on $\dd$. By Lemma \ref{L:INthm01}, there is a constant $c$ depending only on $K$ such that}
\begin{equation*}
\frac{1-|z_n|}{1-|\varphi(z_n)|}\leq c\bigg[\int_{\mathbb{D}}|f_n\circ \varphi(w)|^2dA(w)\bigg]^{\frac{1}{2}}=
c\bigg[\int_{\dd}|f_n(z)|^2J(z,\varphi^{-1})dA(z)\bigg]^{\frac{1}{2}}.
\end{equation*}
Therefore
$\|C_\varphi\|_e\geq c_{1}  \mathop{\lim}\limits_{r\to 1}\mathop{\sup}\limits_{|z|>r}m_\varphi(z)$ for some constant
$c_{1}=c_{1}(K)$ that depends only on $K$. The proof of Theorem \ref{T:compactforC} is complete now.
\end{proof}



\section{Proof  of Theorem \ref{T:extraforShplus}}\label{S:Pplus}
In the first subsection below we prove a crucial technical lemma (Lemma \ref{L:Key01}) which is clearly of independent interests. Then we prove Theorem \ref{T:extraforShplus}. Actually, we  prove
\begin{theorem}\label{T:extraforShpluss}Let $\varphi$ be a quasiconformal mapping over $\dd$ such that $\varphi(0)=0$. Let $P^+_\varphi=\int_{\mathbb{D}}\frac{f(w)}{|1-\varphi(z)\overline{\varphi(w)}|^2}dA(w),$
If $\widetilde{\varphi}$ is bilipschitz on $\tt$, then
$P_\varphi$ satisfies the weak-$(1, 1)$ inequality: if $f\in L^1(\dd)$, then for any $\alpha>0$,
       \begin{equation}\label{E:weak11}|\{z\in\dd: |P_\varphi^+ f(z)|>\alpha\}|\leq \frac{c}{\alpha}\int_{\dd}|f(z)|dA(z),
       \end{equation}
where $c$ is  a constant independent of $f$, and the leftmost $|\cdot|$ denotes the normalized Lebesgue area.
\end{theorem}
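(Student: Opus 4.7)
The plan is a Calder\'on--Zygmund decomposition argument adapted to the pseudohyperbolic geometry of $\dd$, using the bilipschitz assumption on $\widetilde{\varphi}$ to ensure that this geometry is essentially preserved by $\varphi$. The first step is to observe that bilipschitzness of $\widetilde{\varphi}$ on $\tt$ combined with Lemma \ref{F:AK01} and Lemma \ref{F:AK02} forces both $a_\varphi$ and $a_{\varphi^{-1}}$ to be bounded, so $1-|\varphi(z)|\asymp 1-|z|$, and Lemma \ref{L:areachangephd} then gives the two-sided area comparison $|\varphi(D^{ph}(z,r))|\asymp |D^{ph}(z,r)|$ uniformly in $z\in\dd$ for any fixed $r\in(0,1)$. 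This two-sided doubling is the engine that lets us treat $P_\varphi^+$ as a Calder\'on--Zygmund-type operator inside a single space of homogeneous type on $\dd$.

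The second step is to establish $L^2$-boundedness of $P_\varphi^+$. Via the change of variable $u=\varphi(z)$, $v=\varphi(w)$ we get
$$P_\varphi^+ f(\varphi^{-1}(u))=\int_\dd \frac{g(v)}{|1-u\bar v|^2}\,dA(v)=P^+g(u), \qquad g(v)=f(\varphi^{-1}(v))\,J(v,\varphi^{-1}),$$
which is $L^1$-preserving. Coupling the classical $L^2$-boundedness of the standard positive Bergman operator $P^+$ with a Schur test weighted by $J(\cdot,\varphi^{-1})$ (whose local averages over pseudohyperbolic balls are controlled by the area comparison from step one) yields $\|P_\varphi^+ h\|_{L^2(\dd)} \le C\|h\|_{L^2(\dd)}$.

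Next I would carry out a Calder\'on--Zygmund decomposition of $f$ at level $\alpha$ on $(\dd,dA)$ using a Whitney-type cover by pseudohyperbolic balls $\{B_j=D^{ph}(c_j,r_j)\}$ of the level set of a pseudohyperbolic Hardy--Littlewood maximal function. This produces $f=h+\sum_j b_j$ with $\|h\|_\infty\le C\alpha$, $\mathrm{supp}\,b_j\subset B_j$, $\|b_j\|_1\le C\alpha|B_j|$, and $\sum_j|B_j|\le C\alpha^{-1}\|f\|_1$. For the good part, $L^2$-boundedness and Chebyshev's inequality give $|\{|P_\varphi^+h|>\alpha/2\}|\le C\alpha^{-1}\|f\|_1$, while the dilated exceptional set $E=\bigcup_j B_j^*$ has measure $\le C\alpha^{-1}\|f\|_1$ directly.

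The main obstacle is controlling the bad part on $\dd\setminus E$, because the positive kernel $|1-\varphi(z)\overline{\varphi(w)}|^{-2}$ admits no mean-zero cancellation of $b_j$ to exploit. The strategy is to bypass cancellation via a direct Fubini estimate,
$$\int_{\dd\setminus E}\sum_j P_\varphi^+|b_j|\,dA \le \sum_j \|b_j\|_{L^1}\cdot \sup_{w\in B_j}\int_{\dd\setminus B_j^*}\frac{dA(z)}{|1-\varphi(z)\overline{\varphi(w)}|^2},$$
which reduces the problem to a uniform $L^\infty$ bound on the tail integral of the kernel. This uniform bound is precisely the content of the promised Lemma \ref{L:Key01}; it is a quasiconformal analogue of the classical Forelli--Rudin estimate for the positive Bergman kernel, and should be proved by decomposing $\dd\setminus B_j^*$ into pseudohyperbolic annular shells and applying Lemma \ref{L:areachangephd} on each shell to transfer the computation to the standard Bergman setting. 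Once Lemma \ref{L:Key01} is in hand, the Fubini estimate yields $\int_{\dd\setminus E}\sum_j P_\varphi^+|b_j|\,dA\le C\|f\|_1$, and a final Chebyshev step completes the proof. The hard part is precisely this uniform kernel estimate: the quasiconformal distortion at every scale near the boundary must be matched to the Carleson box structure, and bilipschitzness of $\widetilde{\varphi}$ is the exact hypothesis that makes this uniform control available.
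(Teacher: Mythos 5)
There is a genuine gap at the crucial step, namely the estimate of the bad part. Your plan reduces everything to the claim that
$$\sup_{w\in B_j}\int_{\dd\setminus B_j^*}\frac{dA(z)}{|1-\varphi(z)\overline{\varphi(w)}|^2}\leq C$$
uniformly in $j$. This is false already for $\varphi=\mathrm{id}$: by the standard Forelli--Rudin computation (Lemma \ref{L:integralestimates} in the borderline case, or a direct decomposition into dyadic Carleson shells around $w/|w|$), one has $\int_{\dd}|1-z\bar{w}|^{-2}\,dA(z)\asymp\log\frac{1}{1-|w|^2}$, and the divergence comes from the shells at distance $2^k(1-|w|)$ for $k=1,2,\dots$, each contributing a fixed amount --- not from the local singularity at $w$. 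Deleting the dilated ball $B_j^*$ removes only finitely many shells, so the tail integral still blows up logarithmically as $B_j$ approaches $\tt$. Since the kernel is positive there is no cancellation to rescue the bad part, and the direct Fubini estimate cannot close; this is exactly why $P^+$-type operators are not Calder\'on--Zygmund operators in the usual sense, as the introduction of the paper warns. You have also misread Lemma \ref{L:Key01}: it is not a tail/annulus estimate but a Carleson-box distortion statement ($\varphi^{-1}(Q_I)$ is contained in a Carleson box $Q_J$ of comparable area), so it cannot supply the bound you need. A secondary soft spot is your $L^2$ step: after the substitution you must control $P^+$ on $L^2(J(\cdot,\varphi^{-1})\,dA)$, i.e.\ a weighted (B\'ekoll\'e--Bonami type) estimate, which your one-line Schur test does not justify; the paper avoids this by getting $L^2$-boundedness from Theorem \ref{T:boundforC} via $P_\varphi=C_\varphi C_\varphi^*$.

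For comparison, the paper's actual route is a pointwise sparse-type domination rather than a Calder\'on--Zygmund decomposition. It splits off the part of the kernel with $|1-\varphi(z)\overline{\varphi(w)}|\geq r_0$ (bounded kernel, hence trivially bounded on all $L^p$), and for the singular part uses Lemma \ref{L:Key02} to place $\varphi(z),\varphi(w)$ in a Carleson box $Q_I$ with $|Q_I|\asymp|1-\varphi(z)\overline{\varphi(w)}|^2$, then Lemma \ref{L:Key01} (which needs the bilipschitz hypothesis on $\widetilde{\varphi}$) to produce a Carleson box $Q_J\ni z,w$ with $|1-\varphi(z)\overline{\varphi(w)}|^{-2}\lesssim|Q_J|^{-1}$, and finally Mei's one-third shifted dyadic grids to dominate the kernel by $\sum_{I\in\mathcal{D}^0\cup\mathcal{D}^{1/3}}\chi_{Q_I}(z)\chi_{Q_I}(w)/|Q_I|$. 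The weak-$(1,1)$ inequality for these positive dyadic model operators is then quoted from \cite{APR2013}. If you want to salvage your outline, you would have to replace the uniform tail bound by precisely such a dyadic domination (or prove the weak-$(1,1)$ for the dyadic model operator yourself, which uses the nesting of the boxes containing a point rather than kernel decay).
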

%
%

\subsection{Carleson Boxes and Quasiconformal Mappings}
This subsection is devoted to investigating the behaviors of Carlesons boxes under a quasiconformal mapping. The main result is Lemma \ref{L:Key01}. It plays a key role in the proof of Theorem \ref{T:extraforShplus}.
Let $I\subset\tt$ be an interval. Then it induces a Carleson box $Q_I$ in $\dd$:
\begin{equation*}
Q_I=\{z\in\mathbb{D}:1-|I|\leq |z|<1,\frac{z}{|z|}\in I\},
\end{equation*}
where $|I|$ is the normalized arc length of $I$. We call the point $z_I=r_Ie^{i\theta_I}$ the center of $Q_I$, where $r_I=1-\frac{|I|}{2}$ and $\theta_I$ is the midpoint of $I$.

\begin{lemma}\label{L:Key01}
Let $\varphi$ be a quasiconformal mapping over $\dd$ such that $\varphi(0)=0$. If the boundary mapping $\widetilde{\varphi}$ is bilipschitz on $\mathbb{T}$, then there is a constant $0<t_0<1$ such that for each Carleson box $Q_I$ with $|I|<t_0$, there exists a Carleson box $Q_J$ such that
\begin{itemize}
  \item [\emph{(1)}] $\varphi^{-1}(Q_I)\subset Q_J$, and
  \item [\emph{(2)}] $\frac{|Q_J|}{|\varphi^{-1}(Q_I)|}\leq c.$
\end{itemize}
Here the constant $c$ does not depend on the choice of $Q_I$.
\end{lemma}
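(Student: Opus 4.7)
The plan is to exploit the bilipschitz hypothesis radially and then angularly, combined with the pseudohyperbolic distortion estimate of Lemma \ref{L:areachangephd}. Applying Lemma \ref{F:AK01} and Lemma \ref{F:AK02} to both $\varphi$ and $\varphi^{-1}$, the bilipschitz hypothesis yields the two-sided comparison $1-|z|\sim 1-|\varphi(z)|$ on $\dd$, with constants depending only on $K$ and on the bilipschitz constants of $\widetilde{\varphi}$. Given $Q_I$ with $|I|<t_0$ small, I set $J_0:=\widetilde{\varphi}^{-1}(I)$ (so $|J_0|\sim|I|$) and take $J$ to be the arc with the same midpoint as $J_0$ and length $C|I|$, where $C$ is a large constant to be fixed at the end. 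For $w\in\varphi^{-1}(Q_I)$ the inclusion $\varphi(w)\in Q_I$ forces $1-|\varphi(w)|\le|I|$, hence $1-|w|\lesssim|I|$ by the comparison; choosing $C$ larger than the resulting constant gives the radial inclusion $1-|w|\le|J|$.

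The main obstacle is the angular control, for which the scheme is to decompose $\partial Q_I\cap\dd$ into pieces, each enclosed in a pseudohyperbolic disk of fixed pseudohyperbolic radius, and then push these through $\varphi^{-1}$ via Lemma \ref{L:areachangephd}. A direct pseudohyperbolic computation shows the bottom arc $\{(1-|I|)e^{i\theta}:e^{i\theta}\in I\}$ lies in $D^{ph}(z_I,r_0)$ for some universal $r_0<1$, so Lemma \ref{L:areachangephd} places $\varphi^{-1}$ of this arc inside a Euclidean disk of radius $\sim|I|$ about $\varphi^{-1}(z_I)$. Each radial side of $Q_I$ is split dyadically as $\bigcup_{k\ge 0}P_k$, where $P_k$ is the radial segment between depths $2^{-k-1}|I|$ and $2^{-k}|I|$; a further direct computation shows that each $P_k$ fits in a pseudohyperbolic disk of fixed pseudohyperbolic radius about its midpoint $\zeta_k$, so $\varphi^{-1}(P_k)$ sits in a Euclidean disk of radius $\sim 2^{-k}|I|$ (again by the distortion lemma together with the radial comparison). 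Summing these diameters over $k$ in a geometric series controls the angular deviation of $\varphi^{-1}$(radial side) from the corresponding endpoint of $J_0$ by a constant multiple of $|I|$. Combining, the boundary $\partial(\varphi^{-1}(Q_I))$, and hence $\varphi^{-1}(Q_I)$ itself by connectedness of its radial footprint, has angular extent $\lesssim|I|$ about $J_0$; picking $C$ larger than all these angular constants yields $\varphi^{-1}(Q_I)\subset Q_J$.

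The area ratio is then immediate. The upper bound $|Q_J|\sim|I|^2$ is automatic, and for the matching lower bound on $|\varphi^{-1}(Q_I)|$ one observes that a small pseudohyperbolic disk $D^{ph}(z_I,r_1)$ (for some universal $r_1<1$) fits inside $Q_I$; Lemma \ref{L:areachangephd} applied to $\varphi^{-1}$ then gives $\varphi^{-1}(Q_I)\supset D^{ph}(\varphi^{-1}(z_I),\psi_{1/K}(r_1))$, whose area is $\sim|I|^2$ by the radial bilipschitz comparison. Thus $|Q_J|/|\varphi^{-1}(Q_I)|$ is bounded by a constant depending only on $K$ and on the bilipschitz constants of $\widetilde{\varphi}$.
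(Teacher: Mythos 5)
Your proof is correct, but it takes a genuinely different route from the paper's on both halves of the lemma. For the containment $\varphi^{-1}(Q_I)\subset Q_J$, the paper joins an arbitrary boundary point of $Q_I$ to the center $z_I$ by a path $\gamma\subset\overline{Q}_I$ with $l(\gamma)\geq \mathrm{dist}(\gamma,\tt)$ and invokes Koskela's inverse Sobolev lemma (Lemma \ref{L:koskelath01}) together with the boundedness of $a_{\varphi^{-1}}$ (Lemma \ref{F:AK02}) to bound $\mathrm{diam}(\varphi^{-1}(\gamma))$ by $\delta|I|$ in one stroke; you instead chain pseudohyperbolic disks of fixed radius along a dyadic decomposition of $\partial Q_I$, push each through $\varphi^{-1}$ via the inclusion (\ref{E:changephd}) inside Lemma \ref{L:areachangephd}, and sum a geometric series --- in effect a hands-on proof of the one instance of Koskela's lemma that is needed, using only tools already established in Section 2 plus the two-sided radial comparison (\ref{1207eq03}). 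For the area lower bound $|\varphi^{-1}(Q_I)|\gtrsim |Q_I|$, the paper passes to the geometric definition of quasiconformality, estimates the conformal module of $Q_I$ viewed as a quadrilateral, and applies Rengel's inequality twice; your inscribed pseudohyperbolic disk $D^{ph}(z_I,r_1)\subset Q_I$, combined again with (\ref{E:changephd}) and $1-|\varphi^{-1}(z_I)|\gtrsim 1-|z_I|$, reaches the same conclusion considerably more economically. Both routes consume the bilipschitz hypothesis in the same way, and both leave the same routine points implicit (the existence of universal radii $r_0,r_1<1$, and the topological step from $\partial\varphi^{-1}(Q_I)\subset Q_J$ to $\varphi^{-1}(Q_I)\subset Q_J$). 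What your version buys is self-containedness and a shorter lower-bound argument; what the paper's buys is reliance on standard, citable quasiconformal machinery (Koskela's lemma, Rengel's inequality) that would adapt with fewer changes to quasiregular or higher-dimensional settings.
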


\begin{proof} Since $\widetilde{\varphi}$ is bilipschitz, by Theorem \ref{T:boundforC},  $C_\varphi$ and $C_{\varphi^{-1}}$ are bounded. So, there exist constants $c_{1}>0$ and $c_{2}>0$ such that
for any $z\in\dd$,
\begin{equation}\label{1207eq03}
c_{1}<\frac{1-|\varphi^{-1}(z)|}{1-|z|}<c_{2}.
\end{equation}
Next we further divide the proof of Lemma \ref{L:Key01} into two subsections.

\subsubsection{Area Change of Carleson Boxes--the Upper Bound}\label{S:areaupper}
\smallno The main task of this subsection is to verify the next claim.

\bigno \textbf{Claim.} There exists a positive number $t_0\in(0,1)$ such that for any Carleson box $Q_I$ with $|I|\leq t_0$, there is a Carleson box $Q_J$ such that
\begin{itemize}
  \item [(i)] $\varphi^{-1}(Q_I)\subset Q_J,$
  \item [(ii)] $|Q_J|\leq c|Q_I|$.
\end{itemize}
Here the constant $c$ depends only on the function $\varphi$.

\bigno To verify this claim, we need a theorem of Koslela \cite{Koskela1994}. Recall that $a_\varphi(\cdot)$ is given by Definition \ref{D:aphi}.
\begin{lemma}\emph{(\cite{Koskela1994}, Lemma 2.6)}\label{L:koskelath01}
 Let $\varphi$ be a $K$-quasiconformal mapping over $\dd$, and $\gamma\subset\mathbb{D}$ be a rectifiable arc with length $l(\gamma)\geq \emph{dist}(\gamma,\mathbb{T})$. Then
\begin{equation*}
\emph{diam}(\varphi(\gamma))\leq c\int_{\gamma}a_\varphi(z)|dz|,
\end{equation*}
where the constant $c$ depends only on $K$, and \emph{diam} denotes the diameter of the set $\varphi(\gamma)$.
\end{lemma}

\begin{proof}[Proof of the Claim] Assume that $Q_I$ is a Carleson box with $|I|=t$ and the center  $z_I$. Let $z_0$ be the center of the arc $l_1 = \{z: |z|=1-t\}\cap \overline{Q}_I$. Let $\gamma_0$ be the radial segment connecting $z_I$ and $z_0$. Consider any point $\tilde{w}\in\partial \varphi^{-1}(Q_I)$.
Let $\tilde{z}\in\partial Q_I$ be such that $\varphi^{-1}(\tilde{z})=\tilde{w}$.
Now we distinguish two cases.
If $\tilde{z}\in\partial Q_I-\mathbb{T}$, then let $
\gamma_{\tilde{z}}\subset\partial Q_I-\mathbb{T}$ be the path connecting $z_0$ and $\tilde{z}$. More precisely, let $l_2$ be the radial segment in $\partial Q_I$ that contains $\tilde{z}$, and let $z_{1}=l_1\cap l_2$. Then $\gamma_{\tilde{z}}$ is the arc in $l_1\cup l_2$ passing through $z_1$.
Next, set $$\gamma=\gamma_0+\gamma_{\tilde{z}}.$$ Observe that $l(\gamma_0)=\frac{t}{2}$ and $l(\gamma_{\tilde{z}})|\leq (\pi+1)t$, so we have $$\frac{t}{2}\leq l(\gamma)\leq (\frac{3}{2}+\pi) t<5t.$$
Since $\textrm{dist}(\gamma, \mathbb{T})\leq \textrm{dist}(z_I, \mathbb{T})=\frac{t}{2}$,
we conclude that
\begin{equation*}
5t>l(\gamma)\geq \frac{t}{2}\geq \textrm{dist}(\gamma,\mathbb{T}).
\end{equation*}
If $\tilde{z}\in\partial Q_I\cap\mathbb{T}$, then link $\tilde{z}$ and $z_I$ by a segment contained in $Q_I$ whose arc length is at most $(\pi+\frac{1}{2})t$.
By Lemma \ref{L:koskelath01}, we have
\begin{equation*}
\text{diam}(\varphi^{-1}(\gamma))\leq c\int_{\gamma}a_{\varphi^{-1}}(z)|dz|\leq 5c c_{1}t \triangleq \delta t,
\end{equation*}
where $c_{1}=\mathop{\sup}\limits_{z\in\mathbb{D}}a_{\varphi^{-1}}(z)$. Since $\varphi^{-1}$ is Lipschitz on $\mathbb{T}$, by Lemma \ref{F:AK02}, $c_{1}<\infty$. It follows that $$\textrm{dist}(\varphi^{-1}(z_I), \partial \varphi^{-1}(Q_I))\leq \delta t<\infty.$$

\bigno Note that the constant $\delta$ depends only on $\varphi^{-1}$. Let us choose a constant $t_0\in(0, 1)$ such that $$\delta t_0<\frac{1}{100}.$$ For any $0<t<t_0$,
\begin{equation}\label{E:carlesonboxQC} \varphi^{-1}(Q_I)\subset \{w\in\mathbb{D}:|w-\varphi^{-1}(z_I)|<\delta t\}.\end{equation}

 \noindent Consider a Carleson box $Q_J$ such that $|J|=10\pi\delta t$ and the middle points of $J$ and $\varphi^{-1}(I)$ coincide.
Next, we show  \begin{equation}\varphi^{-1}(Q_I)\subset Q_J.\end{equation}

\smallno If $w'\in\partial \varphi^{-1}(Q_I)$ is the center of $\varphi^{-1}(I)$, then let $z'\in\tt$ such that $\varphi^{-1}(z')=w'$.
By (\ref{E:carlesonboxQC}), for each $z\in Q_I$,
$$|\varphi^{-1}(z)-\varphi^{-1}(z')|\leq
|\varphi^{-1}(z)-\varphi^{-1}(z_I)|+|\varphi^{-1}(z_I)-\varphi^{-1}(z')|\leq 2\delta t.$$
Then
$$\varphi^{-1}(Q_I)\subset \{w\in\mathbb{D}:|w-w'|<2\delta t\}.$$
Since $\{w\in\mathbb{D}:|w-w'|<2\delta t\}\subset Q_J$, we have
$\varphi^{-1}(Q_I)\subset Q_J$.

\bigno Then it follows from
$|Q_J|= (10\pi\delta t)^2(2-10\pi\delta t)$ and
$|Q_I|=t^2(2-t)$ that we have
$$|Q_J|\leq \frac{2(10\pi\delta)^2}{2-t}|Q_I|\leq 200(\pi\delta)^2|Q_I|.$$  
Now we have verified the claim and obtained an upper bound for the area change of a Carleson box under a quasiconformal mapping.
\end{proof}

\subsubsection {Area Change of Carleson Boxes--the Lower Bound}
For the lower bound, we shall need to work with the so-called geometric definition of quasiconformal mappings. It  is based on conformal modules of quadrilaterals.

\bigno
 A quadrilateral $Q(z_1,z_2,z_3,z_4)$ is a Jordan domain $Q$ with four consecutive boundary points $\{z_1,z_2,z_3,z_4\}$ specified and with a positive orientation. The boundary arcs $\overrightarrow{z_1z_2}$ and $\overrightarrow{z_3z_4}$ are called $a$-sides and the other two are called $b$-sides. By the Riemann mapping theorem, there is a unique conformal mapping $\phi: Q\to R$ , where $R$ is a rectangle with vertices $\{0, a, a+ib, ib: a>0, b>0\}$, such that $\phi(z_1)=0, \phi(z_2)=a, \phi(z_3)=a+ib$ and $\phi(z_4)=ib$. Then the conformal module $\textrm{Mod} (Q) $ of $Q$ is defined to be $\frac{b}{a}$. We call an orientation-preserving homeomorphism on $\dd$ $K$-quasiconformal if
$$\sup_{Q}\frac{\textrm{Mod}(\varphi(Q))}{\textrm{Mod}(Q)}\leq K<\infty,
$$
where $Q$ runs over all quadrilaterals such that $\overline{Q}\subset \dd$.
It is a fundamental and remarkable result in the theory of quasiconformal mapping that the geometric definition is equivalent to the analytic definition. A proof of this equivalence can be found in \cite{ah1}, \cite{lv}.

\bigno Now we continue our estimation of Carleson boxes.  We view a Carleson box $Q_I$ as a quadrilateral by
$Q_I=Q_I(z_1,z_2,z_3,z_4)$ such that
the vertices satisfy $|z_1|=|z_4| =1-t$ and $|z_2|=|z_3|=1$. In particular, the $a$-sides of $Q_I$ are the radial segments and the other two are the $b$-sides.
 Let $\Gamma_a$ be the set of rectifiable arcs $\gamma\subset Q_I$ which connect the  $a$-sides of $Q_I$. We define $$s_a(Q_I)=\mathop{\inf}\limits_{\gamma\in \Gamma_a}l(\gamma),$$
where $l(\gamma)$ is the arc length of $\gamma$.
Similarly, we define $s_b(Q_I)$.  A direct calculation shows that
$$s_a(Q_I)=2\pi t(1-t) \quad \text{and} \quad s_b(Q_I)=t.$$

 \bigno Next we shall need Rengel's inequality \cite{lv} which says that the conformal module of a quadrilateral $Q_I$ satisfies the double inequality
\begin{equation}
\frac{(s_b(Q_I))^2}{\pi|Q_I|}\leq \textrm{Mod}(Q_I)\leq \frac{\pi |Q_I|}{(s_a(Q_I))^2}.
\end{equation}

\medno Since $|Q_I|=t^2(2-t)$, we get
\begin{equation*}
\frac{1}{\pi(2-t)}\leq \textrm{Mod}(Q_I)\leq \frac{2-t}{4\pi(1-t)^2}.
\end{equation*}
By the geometric definition,
$$
\frac{1}{K\pi(2-t)}\leq \textrm{Mod}(\varphi^{-1}(Q_I))\leq K\frac{2-t}{4\pi(1-t)^2}.
$$
By Rengel's inequality,
$$|\varphi^{-1}(Q_I)|\geq \frac{1}{\pi} \frac{(s_b(  \varphi^{-1}(Q_I)  ))^2}{\textrm{Mod}( \varphi^{-1}(Q_I) )}.$$
Let $l_{Q_I}=\partial Q_I\cap\{z:|z|=1-t\}$. For any $z'\in\varphi^{-1}(l_{Q_I})$, let $z\in l_{Q_I}$ be such that $z'=\varphi^{-1}(z)$.
 For any  $z''\in\partial\varphi^{-1}(Q_I)\cap\tt$, let $\gamma\subset \varphi^{-1}(Q_I)$ be a curve connecting $z'$ and $z''$. Then
\begin{equation*}
l(\gamma)\geq |z'-z''|\geq \textrm{dist}(z',\mathbb{T})=1-|z'|=1-|\varphi^{-1}(z)|\geq c_{1}(1-|z|)=c_{1}t.
\end{equation*}
The last inequality is due to the left side of (\ref{1207eq03}).
Thus $$s_b(\varphi^{-1}(Q_I))\geq c_{1}t,$$ and we deduce that
\begin{equation*}
|\varphi^{-1}(Q_I)|\geq \frac{1}{\pi}c_{1}^2\frac{4\pi(1-t)^2}{K(2-t)}t^2.
\end{equation*}
It follows that for any  $t_1\in(0,1)$, there is a constant $c_{2}=c(\varphi, t_1)>0$ such that for any Carleson box $Q_I$ with $|I|\leq t_1$,
\begin{equation}\label{E:arealower}
|\varphi^{-1}(Q_I)|\geq c_{2}|Q_I|.
\end{equation}

\noindent Let $0<t_0<1$ be the constant  in the upper estimate. Then for any Carleson box $Q_I$ with $|I|<t_0$, there exists a
Carleson box $Q_J$ such that $\varphi^{-1}(Q_I)\subset Q_J.$
Furthermore, by the Claim in Subsection \ref{S:areaupper} and (\ref{E:arealower}),
\begin{equation*}
\frac{|Q_J|}{|\varphi^{-1}(Q_I)|}=\frac{|Q_J|}{|Q_I|}
\frac{|Q_I|}{|\varphi^{-1}(Q_I)|}\leq c.
\end{equation*}
The proof of Lemma \ref{L:Key01} is complete now.
\end{proof}


\bigno Next, we introduce some dyadic systems over $\dd$. 
%
Let $\zz_+=\nn \cup \{0\}$. Consider the following two dyadic grids on $\tt$,
\begin{equation*}
\mathcal{D}^0=\{[\frac{2\pi m}{2^j},\frac{2\pi (m+1)}{2^j}):m\in\mathbb{Z}_+, j\in\mathbb{Z}_+, 0\leq m<2^j\}
\end{equation*}
and
\begin{equation*}
\mathcal{D}^{\frac{1}{3}}=\{[\frac{2\pi m}{2^j}+\frac{2\pi}{3},\frac{2\pi (m+1)}{2^j}+\frac{2\pi}{3}):m\in\mathbb{Z}_+, j\in\mathbb{Z}_+, 0\leq m<2^j\}.
\end{equation*}
The first appearance of shifted dyadic grids in print is probably in page 30 of \cite{Christ}. A quick way to appreciate why  shifted dyadic grids are powerful is to look at \cite{APR2013}, \cite{GJ1982} and \cite{Mei2003}. In particular, \cite{APR2013} contains a nice application to Sarason's problem on Toeplitz products.
For each $\beta\in\{0, \frac{1}{3}\}$, let $\mathcal{Q}^\beta$ denote the collection of Carleson boxes $Q_I$ with $I\in\mathcal{D}^\beta$ and we call $\mathcal{Q}^\beta$  a Carleson box system. 
\subsection{Proof of Theorem \ref{T:extraforShpluss}}
\noindent
Let $r_0 < {t_0^2}/{16},$ where
$t_0$ is the constant in Lemma \ref{L:Key01}. Then we write
$
P_\varphi^+=(P_{\varphi}^+)_{r_0}+(P_\varphi^+)_{1-r_0},
$
where
\begin{equation*}
(P_{\varphi}^+)_{r_0}f(z)=\int_{|1-\varphi(z)\overline{\varphi(w)}|\geq r_0}\frac{f(w)}{|1-\varphi(z)\overline{\varphi(w)}|^2}dA(w)
\end{equation*}
and
\begin{equation*}
(P_{\varphi}^+)_{1-r_0}f(z)=\int_{|1-\varphi(z)\overline{\varphi(w)}|<r_0}\frac{f(w)}
{|1-\varphi(z)\overline{\varphi(w)}|^2}dA(w).
\end{equation*}
Since the integral kernel of $(P_{\varphi}^+)_{r_0}$ is bounded, $(P_{\varphi}^+)_{r_0}$
is bounded on $L^p(\dd)$ for $1\leq p\leq\infty$. Next let us focus on the $L^2$-boundedness of $(P_\varphi^+)_{1-r_0}$ under the assumption that the boundary mapping $\widetilde{\varphi}$ is bilipschitz on $\mathbb{T}$.
The next   lemma  is elementary, hence proof skipped.

\begin{lemma}\label{L:Key02} For any $z, w \in \dd$, there exists  a Carleson box $Q_I$ such that $z,w\in Q_I$ and
$\frac{1}{16}|Q_I|\leq |1-z\bar{w}|^2\leq 8 |Q_I|$.
\end{lemma}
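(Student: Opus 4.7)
The plan is to construct the arc $I$ explicitly from the geometry of $z$ and $w$ and to exploit the exact identity
$$|1-z\bar w|^2 = (1-|z|^2)(1-|w|^2) + |z-w|^2,$$
in combination with the formula $|Q_I| = |I|^2(2-|I|)$ already used in the paper.

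First I would set $\hat z = z/|z|$, $\hat w = w/|w|$ (handling $z=0$ or $w=0$ separately by simply taking $I = \tt$), let $\delta \in [0,1/2]$ denote the normalized arc-length of the shorter arc joining $\hat z$ to $\hat w$, and define the common scale
$$t = \max\{1-|z|,\ 1-|w|,\ \delta\}.$$
Then I would take $I$ to be the arc centered at the angular midpoint of $\hat z, \hat w$ of normalized length $|I| = \min\{1, \kappa t\}$, for a universal constant $\kappa$ to be tuned below (e.g.\ $\kappa = 5/2$). Both containments $z, w \in Q_I$ hold automatically, since $1-|z|, 1-|w| \leq t \leq |I|$ forces $|z|, |w| \geq 1-|I|$, and $\delta \leq t \leq |I|$ places $\hat z, \hat w \in I$.

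For the upper bound $|1-z\bar w|^2 \leq 8|Q_I|$, I would bound each term of the identity: $(1-|z|^2)(1-|w|^2) \leq 4t^2$, and $|z-w|^2 = (|z|-|w|)^2 + 2|z||w|(1-\cos 2\pi\delta) \leq t^2 + 4\pi^2 t^2$, using $1-\cos 2\pi\delta \leq 2\pi^2\delta^2$. This gives $|1-z\bar w|^2 \leq (5+4\pi^2)t^2 \leq 8\kappa^2 t^2 \leq 8|Q_I|$ once $\kappa^2 \geq (5+4\pi^2)/8 \approx 5.57$. For the lower bound $|1-z\bar w|^2 \geq |Q_I|/16$, I would split into cases by which entry realizes $t$. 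If $1-|z| = t$ (the case $1-|w|=t$ is symmetric), then $|1-z\bar w| \geq 1-|z||w| \geq 1-|z| = t$. If instead $\delta = t$ (so necessarily $|z|, |w| \geq 1-t \geq 1/2$ when $t \leq 1/2$), then Jordan's inequality $\sin(\pi\delta) \geq 2\delta$ yields $|z-w|^2 \geq 2|z||w|(1-\cos 2\pi\delta) \geq 4t^2$. Either way $|1-z\bar w|^2 \geq t^2 \geq |Q_I|/(2\kappa^2) \geq |Q_I|/16$ provided $\kappa \leq 2\sqrt{2}$.

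The edge case $\kappa t > 1$, in which $|I| = 1$ and $Q_I = \dd$, is handled by direct verification: $|1-z\bar w|^2 \leq 4 \leq 8|Q_I|$, while $|1-z\bar w|^2 \geq t^2 \geq 1/\kappa^2 \geq |Q_I|/16$ by the same case analysis. The main (and only) obstacle is bookkeeping, namely choosing $\kappa$ in the non-empty range $\bigl[\sqrt{(5+4\pi^2)/8},\, 2\sqrt{2}\bigr] \approx [2.36,\, 2.83]$ so that both numerical constants of the claim hold simultaneously; the value $\kappa = 5/2$ works, which is why the paper classifies the lemma as elementary.
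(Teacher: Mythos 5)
The paper offers no proof of this lemma --- it is declared ``elementary, hence proof skipped'' --- so there is nothing to compare against; what matters is whether your argument stands on its own, and it does. The identity $|1-z\bar w|^2=(1-|z|^2)(1-|w|^2)+|z-w|^2$ is correct, the containments $z,w\in Q_I$ follow from $1-|z|,\,1-|w|,\,\delta\le t\le |I|$, and the constants check out: the upper bound needs $5+4\pi^2\le 8\kappa^2$ (true for $\kappa=5/2$ since $5+4\pi^2\approx 44.5<50$), the lower bound needs $2\kappa^2\le 16$ together with $|1-z\bar w|\ge t$, which your two cases deliver ($1-|z||w|\ge 1-|z|$ in the radial case; $2|z||w|(1-\cos 2\pi\delta)\ge \tfrac12\cdot 8\delta^2=4\delta^2$ in the angular case, using $|z|,|w|\ge \tfrac12$ and $1-\cos 2\pi\delta=2\sin^2(\pi\delta)\ge 8\delta^2$), and the capped case $|I|=1$ is handled. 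This is a complete and correct proof of the statement the paper leaves to the reader.
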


\bigno
 For any $z, w \in \dd$ such that $|1-\varphi(z)\overline{\varphi(w)}|^2\leq r_0$, by Lemma \ref{L:Key02}, there exists a Carleson box $Q_I$ such that $\varphi(z), \varphi(w)\in Q_I$ and   $$|I|^2\leq 16 |1-\varphi(z)\overline{\varphi(w)}|^2 \leq  t_0^2.$$
By Lemma \ref{L:Key01}, there exists a Carleson box $Q_J$ such that $z, w\in Q_J$ and
\begin{eqnarray*}
\frac{1}{|1-\varphi(z)\overline{\varphi(w)}|^2}\leq \frac{1}{|Q_I|}
\leq\frac{16 c_{1}}{|\varphi^{-1}(Q_I)|} \leq \frac{c_2}{|Q_J|}.
\end{eqnarray*}
Here  we have used  the Claim in Subsection \ref{S:areaupper} and  Lemma \ref{L:Key01}.
%
Now, a few known facts will help conclude the proof. By \cite{Mei2003}, for any interval $J\subset\tt$,  there exists an interval $I'\in\mathcal{D}^0\cup\mathcal{D}^\frac{1}{3}$ such that
$J\subset I'$ and $|I'|\leq 6|J|$.
 So we can  find
an interval $I'\in\mathcal{D}^0\cup\mathcal{D}^\frac{1}{3}$ such that
\begin{eqnarray}
\frac{1}{|1-\varphi(z)\overline{\varphi(w)}|^2} \leq \frac{c_3\chi_{Q_{I'}}(z)\chi_{Q_{I'}}(w)}{|Q_{I'}|}. \label{E:dyadiccontrol03}
\end{eqnarray}

\smallno Now we write the integral kernel of $(P_\varphi^+)_{1-r_0}$ as
$
\widetilde{K}(z, w)=\frac{\chi_{S}(z,w)}{|1-\varphi(z)\overline{\varphi(w)}|^2},
$
where $S=\{(z,w)\in\mathbb{D}\times\mathbb{D}:|1-\varphi(z)\overline{\varphi(w)}|<r_0\}$.
It follows that
\begin{equation}\label{E:dyadiccontrol}
\widetilde{K}(z,w)\leq c_{3}\sum_{ I \in \{\mathcal{D}^0\cup\mathcal{D}^{\frac{1}{3}}\} }\frac{\chi_{Q_{I}}(z)\chi_{Q_{I}}(w)}{|Q_{I}|}.
\end{equation}
Now we write
$K_{\beta}(z,w)=\sum_{I\in\mathcal{D}^\beta}\frac{\chi_{Q_I}(z)\chi_{Q_I}(w)}{|Q_I|}
$
 and
$\mathcal{Q}^\beta f(z)=\int_{\dd}K_\beta (z, w)f(w)dA(w).
$
Then the proof follows from the  fact  \cite{APR2013} that the weak-\emph{(1, 1)} type inequality holds for
$\mathcal{Q}^\beta$, $\beta\in\{0, \frac{1}{3}\}$.

\section{Proof of Theorem \ref{T:Shexpra}}
We shall need three different definitions of BMO functions over the unit disk, and to show that they are all equivalent. This is probably
well known to experts. Since we cannot locate a reference and it is clearly of independent interests, we present a proof for completeness. Readers familiar with BMO may skip the next subsection. 

\subsection{Three Definitions of BMO on $\dd$}

We begin with the definition of a space of homogeneous type which plays an important role in our proofs.
\begin{definition}
\emph{(\cite{CW1971}, \cite{DH2009})}\label{0828de1}
A triple $(X, \rho, \mu)$ is a space of homogeneous type if
\begin{itemize}
  \item [\emph{(i)}] $X$ is a set,
  \item [\emph{(ii)}] $\rho$ is a quasi-metric on $X$, that is,
 $\rho:X\times X\to[0,\infty]$ is such that for any $x, y, z\in X$,
  \emph{(1)} $\rho(x, y)=0 \Leftrightarrow x=y, $
\emph{(2)}  $\rho(x, y) = \rho(y, x),$ and
\emph{(3)} $  \rho(x, y) \leq  c(\rho(x, z)+\rho(z, y))$
for some constant $c$, and
  \item [\emph{(iii)}] $\mu$ is a positive measure on $X$ such that for all $x\in X$ and $r>0$, there exists a constant $c$ such that
 \emph{(a)} $ \mu(B(x,r)) < \infty, $ and
\emph{(b)} $  \mu(B(x,2r)) \leq  c\mu(B(x,r))\label{0828eq1},$
where $B(x,r)=\{y\in X:\rho(x,y)<r\}$.
\end{itemize}
\end{definition}

\bigno Let $d(z,w)=|z-w|$ be the Euclidean distance on   $\mathbb{D}$. Let $|E|$ be the normalized Lebesgue area of  $E \subset \dd$. Then the triple $(\mathbb{D},d,|\cdot|)$ is  a space of homogeneous type $\cite{CRW1976}$.

\bigno Let $\dh(=\dh(z, r)) \triangleq \{w \in \dd: |z-w|<r\}$ be a homogeneous ball in  $(\dd, d, |\cdot|)$. For a locally integrable function $f$ on $\dd$, define its average over $E_{\dh}$,
$E_{\dh}(f)=\frac{1}{|\dh|}\int_{\dh}f(z)dA(z).
$
Now we introduce  the needed BMO spaces over $\dd$. First, we say that $f\in \textrm{BMO}_{\textrm{H}}(\dd)$ if
$$\|f\|_{\textrm{BMO}_\textrm{H}} \triangleq \sup_{\dh}E_{\dh}(|f-E_{\dh}(f)|)<\infty,$$
where $\dh$ runs over all homogeneous balls in $(\mathbb{D}, d, |\cdot|)$.
If we replace $\dh$ in the above definition by Euclidean balls and Euclidean cubes contained in $\dd$, respectively, then we get $\textrm{BMO}_\textrm{B}(\dd)$ and $\textrm{BMO}_{\textrm{C}}(\dd)$.

\bigno The purpose of this subsection is to show
\begin{theorem}\label{T:BMO}There are positive constants $c_1, c_{2}, c_{3}$ and $c_{4}$ such that
$$
c_{1}\|f\|_{\emph{BMO}_\emph{H}}\leq c_{2}\|f\|_{\emph{BMO}_{\emph{C}}} \leq c_{3}\|f\|_{\emph{BMO}_{\emph{B}}}\leq c_{4}\|f\|_{\emph{BMO}_\emph{H}}.
$$
for all $f\in \emph{BMO}_{\emph{H}}(\dd)$.
\end{theorem}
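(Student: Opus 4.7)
The proof reduces to verifying three inequalities whose composition yields the equivalence. The first, $\|f\|_{\textrm{BMO}_\textrm{B}}\le\|f\|_{\textrm{BMO}_\textrm{H}}$, is immediate: any Euclidean ball $B(w,r)\subset\dd$ coincides with the homogeneous ball $\dh(w,r)$, so the $\textrm{BMO}_\textrm{B}$ supremum is taken over a subfamily of the $\textrm{BMO}_\textrm{H}$ one. The second, $\|f\|_{\textrm{BMO}_\textrm{C}}\le c\|f\|_{\textrm{BMO}_\textrm{B}}$, is the classical ball--cube comparison: for a cube $Q\subset\dd$ of side $s$ centered at $w$, the inscribed Euclidean ball $B_0=B(w,s/2)\subset Q\subset\dd$ has $|B_0|/|Q|=\pi/4$, and one bounds $E_Q(|f-E_{B_0}(f)|)$ by handling $B_0$ directly and covering the four corner pieces of $Q\setminus B_0$ by a universally bounded number of interior Euclidean balls chained to $B_0$ (with the chain lengths and ball sizes controlled by the margin forced by $Q\subset\dd$).

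The main content is the third inequality $\|f\|_{\textrm{BMO}_\textrm{H}}\le c\|f\|_{\textrm{BMO}_\textrm{C}}$. Fix $\dh=\dh(z,r)$. If $r\le 1-|z|$, then $\dh=B(z,r)\subset\dd$ is a full Euclidean ball and an inscribed--cube comparison as in step two reduces the estimate to $\|f\|_{\textrm{BMO}_\textrm{C}}$. The substantive case is $r>1-|z|$, in which $\dh$ straddles $\tt$. I would set $\rho=\min(r,1)$, select an interior pivot cube $Q_*\subset\dd$ of side $\eta\rho$ centered at $z_*=(1-\eta\rho)z/|z|$ for a small universal $\eta$, so that $Q_*\subset\dh$ and $|Q_*|\ge c_0|\dh|$, and then apply a Whitney decomposition of $\dh$ into interior cubes $\{Q_n\}\subset\dd$ whose sides are comparable to their distances to $\tt$. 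A telescoping chain of $O(k)$ interior cubes links each $Q_n$ at Whitney depth $\sim 2^{-k}\rho$ back to $Q_*$, giving $|E_{Q_n}(f)-E_{Q_*}(f)|\lesssim k\,\|f\|_{\textrm{BMO}_\textrm{C}}$. Grouping by depth and using that the Whitney cubes at depth $k$ have total measure $\sim 2^{-k}|\dh|$,
$$\int_{\dh}|f-E_{Q_*}(f)|\,dA\;\lesssim\;\|f\|_{\textrm{BMO}_\textrm{C}}\sum_{k\ge 0}(1+k)\,2^{-k}\,|\dh|,$$
and the convergent series yields $E_{\dh}(|f-E_{Q_*}(f)|)\lesssim\|f\|_{\textrm{BMO}_\textrm{C}}$; combined with $E_{\dh}(|f-E_{\dh}(f)|)\le 2E_{\dh}(|f-E_{Q_*}(f)|)$, this finishes the proof.

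The main obstacle is the Whitney--chaining estimate in the straddling case: verifying, with constants uniform in $(z,r)$, both the geometric decay $\sim 2^{-k}|\dh|$ of measures at Whitney depth $k$ and the linear-in-$k$ growth of chain lengths, so that the key sum $\sum(1+k)2^{-k}$ converges. The convexity and smoothness of $\dd$ are what make these verifications routine in principle, but their careful bookkeeping is where the geometric substance of the theorem is concentrated.
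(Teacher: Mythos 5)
Your proposal is correct, but the route is genuinely different from the paper's in the one direction that carries all the content, namely $\|f\|_{\textrm{BMO}_\textrm{H}}\lesssim\|f\|_{\textrm{BMO}_\textrm{C}}$. The paper disposes of this by invoking Peter Jones's BMO extension theorem for domains bounded by quasicircles (Lemma \ref{F:Jones}): one extends $f$ to $\tilde f\in\textrm{BMO}_\textrm{C}(\cc)$ with controlled norm, encloses the homogeneous ball $\dh(z_0,r)$ in a planar cube $Q(z_0,2r)$ of comparable area, and averages $|\tilde f-E_{Q(z_0)}\tilde f|$ over the larger cube; the straddling case costs only the fixed area ratio $|Q(z_0)|/|\dh(z_0,r)|$. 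You instead stay inside $\dd$ and handle the straddling case by a Whitney decomposition of the disk near $\tt$ together with a telescoping chain of interior cubes back to a pivot cube of comparable measure, summing $\sum_k(1+k)2^{-k}$. Both arguments ultimately rest on the same geometric fact --- $\dd$ is a uniform domain with Ahlfors-regular boundary --- but they package it differently: the paper's proof is short modulo a deep black box and transfers verbatim to any quasidisc, while yours is elementary and self-contained at the price of the chaining bookkeeping you flag (uniform $2^{-k}$ decay of the measure at Whitney depth $k$ and $O(k)$ chain lengths), all of which is routine for the convex smooth domain $\dd$. A further small structural difference: the paper proves two-sided equivalences $\textrm{BMO}_\textrm{H}\simeq\textrm{BMO}_\textrm{C}$ and $\textrm{BMO}_\textrm{H}\simeq\textrm{BMO}_\textrm{B}$ separately (Lemmas \ref{L:BMO1} and \ref{L:BMO2}), whereas you close a cycle of three one-sided inequalities $\|f\|_{\textrm{BMO}_\textrm{B}}\leq\|f\|_{\textrm{BMO}_\textrm{H}}\lesssim\|f\|_{\textrm{BMO}_\textrm{C}}\lesssim\|f\|_{\textrm{BMO}_\textrm{B}}$, which is slightly more economical since only one hard estimate is needed; like the paper, you also implicitly use the paper's Lemma \ref{L:BMO} (replacing $E_{\dh}(f)$ by an arbitrary constant at the cost of a factor $2$).
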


\noindent Theorem \ref{T:BMO} is an immediate consequence of Lemma \ref{L:BMO1} and Lemma
\ref{L:BMO2}.
\begin{lemma}\label{L:BMO1}
The identity map
$i: \emph{BMO}_\emph{H}(\mathbb{D})\to \emph{BMO}_{\emph{C}}(\mathbb{D})$
is an isomorphism between Banach spaces.
\end{lemma}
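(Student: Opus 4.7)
The plan is to establish the two-sided norm equivalence $\|f\|_{\text{BMO}_C} \asymp \|f\|_{\text{BMO}_H}$ by directly comparing averaging sets of each type. The quick direction, $\|f\|_{\text{BMO}_C} \leq c \|f\|_{\text{BMO}_H}$, proceeds as follows: for any cube $Q \subset \dd$, let $B$ be its circumscribed Euclidean disk. Then the homogeneous ball $D^h := B \cap \dd$ contains $Q$ and satisfies $|D^h| \leq |B| \leq c|Q|$. The standard identity $E_Q(|f - E_Q f|) \leq 2 E_Q(|f - E_{D^h} f|) \leq (2|D^h|/|Q|)\, E_{D^h}(|f - E_{D^h} f|)$ together with this area comparison immediately yields the bound.

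For the reverse direction, I would separate two geometric regimes for each homogeneous ball $D^h(z,r)$. In the \emph{interior regime} $r \leq (1-|z|)/2$, the ball $D^h(z,r)$ is a full Euclidean disk sitting inside $\dd$; its inscribed axis-aligned cube and a slightly enlarged concentric cube (still inside $\dd$) both have area comparable to $|D^h|$, so inserting the mean over the enlarged cube and repeating the trick above gives $E_{D^h}|f - E_{D^h} f| \leq c\|f\|_{\text{BMO}_C}$ directly.

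The \emph{boundary regime} $r > (1-|z|)/2$ is where the real work lies, because $D^h(z,r)$ may meet $\partial\dd$ and no single cube in $\dd$ can enclose it. The strategy is a Whitney decomposition combined with a chaining argument. Cover $D^h(z,r)$ by a bounded-overlap family of Whitney cubes $\{Q_k\} \subset \dd$ with $\ell(Q_k) \asymp \operatorname{dist}(Q_k, \partial\dd)$, and fix a root cube $Q_*$ of side length $\asymp \max(r, 1-|z|)$ sitting in the deep interior of $D^h(z,r)$. Because $\dd$ is a uniform (hence John) domain, each $Q_k$ can be joined to $Q_*$ by a chain of adjacent Whitney cubes of length $N_k \lesssim 1 + \log(r/\ell(Q_k))$. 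Telescoping gives $|E_{Q_k} f - E_{Q_*} f| \leq c N_k \|f\|_{\text{BMO}_C}$, and summing local oscillations together with chain-difference contributions controls $E_{D^h}|f - E_{Q_*} f|$ by $c\|f\|_{\text{BMO}_C}$, provided one has the weighted estimate $\sum_k N_k |Q_k| \lesssim |D^h|$.

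The principal obstacle I anticipate is precisely this weighted Whitney summation in the boundary regime: one must verify that the logarithmic chain-length factor $N_k$ does not spoil convergence against the Whitney cube areas. I expect this to fall out of organizing the Whitney cubes by generation, with $O(2^k)$ cubes of area $\asymp 4^{-k} r^2$ at the $k$-th generation, producing the convergent series $\sum_k k \cdot 2^k \cdot 4^{-k} r^2 \asymp r^2 \asymp |D^h|$. Because $\dd$ is about as regular a uniform domain as one could want, the chain-length and area-summation estimates should be verifiable directly in polar coordinates, without appeal to general uniform-domain machinery.
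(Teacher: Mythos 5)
Your argument is correct, but on the hard direction it takes a genuinely different route from the paper's. The easy direction (put the cube inside its circumscribed ball and compare areas) is the same in both. For $\|f\|_{\mathrm{BMO}_{\mathrm{H}}}\lesssim\|f\|_{\mathrm{BMO}_{\mathrm{C}}}$, however, the paper does not chain at all: it invokes Jones' BMO extension theorem for domains bounded by quasicircles to produce $\tilde f\in \mathrm{BMO}(\mathbb{C})$ with $\tilde f|_{\mathbb{D}}=f$ and $\|\tilde f\|_{\mathrm{BMO}(\mathbb{C})}\leq c\|f\|_{\mathrm{BMO}_{\mathrm{C}}(\mathbb{D})}$, and then compares the average of $f$ over $D^h(z_0,r)$ with the average of $\tilde f$ over a single enclosing cube $Q(z_0,2r)$, using only $|Q(z_0,2r)|\lesssim |D^h(z_0,r)|$ together with the ``subtract any constant'' reduction (Lemma 15 of the paper, which you use implicitly as well). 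Your Whitney-plus-chaining argument replaces this black box with a self-contained computation; the weighted summation $\sum_k N_k|Q_k|\lesssim |D^h|$ that you flag as the main obstacle is indeed the crux, and your generation count ($O(2^k)$ cubes of side $\asymp 2^{-k}r$ meeting $D^h(z,r)$, chain length $N_k\asymp k$) closes it, since $\sum_k k\,2^k 4^{-k}r^2\asymp r^2\asymp |D^h(z,r)|$; the last comparison holds because $\mathbb{D}$ satisfies an interior corkscrew condition, so $|B(z,r)\cap\mathbb{D}|\gtrsim \min(r,1)^2$ for every $z\in\mathbb{D}$. What each buys: the paper's proof is a few lines modulo Jones' theorem and fits the quasiconformal theme of the article; yours is longer but elementary, needs only that $\mathbb{D}$ is a John domain rather than that its boundary is a quasicircle, and so transfers to more general domains where no BMO extension theorem is available. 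One small point to nail down in the write-up: when comparing averages over adjacent Whitney cubes, verify that the auxiliary cube containing both still lies in $\mathbb{D}$ (choose the Whitney constants so that a fixed dilate of each cube stays inside the disk).
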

\begin{lemma}\label{L:BMO2}
The identity map
$i: \emph{BMO}_\emph{H}(\mathbb{D})\to \emph{BMO}_{\emph{B}}(\mathbb{D})$
is an isomorphism between Banach spaces.
\end{lemma}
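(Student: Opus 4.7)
The plan is to prove the two-sided norm equivalence $\|f\|_{\textrm{BMO}_\textrm{H}} \sim \|f\|_{\textrm{BMO}_\textrm{B}}$, which is exactly what is needed for the identity map to be an isomorphism of Banach spaces. The easy direction $\|f\|_{\textrm{BMO}_\textrm{B}} \leq \|f\|_{\textrm{BMO}_\textrm{H}}$ is immediate from the definitions: any Euclidean ball $B$ contained in $\mathbb{D}$ coincides with the homogeneous ball $D^h(z_B,r_B)$ having identical center and radius, so the family of averaging sets defining $\|\cdot\|_{\textrm{BMO}_\textrm{B}}$ is a subfamily of those defining $\|\cdot\|_{\textrm{BMO}_\textrm{H}}$, and the supremum can only decrease.

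For the reverse direction $\|f\|_{\textrm{BMO}_\textrm{H}} \leq C\|f\|_{\textrm{BMO}_\textrm{B}}$, fix a homogeneous ball $D^h = D^h(z,r)$. I would run a dichotomy: if $r \leq \tfrac{1}{2}(1-|z|)$, then $D^h = B(z,r)$ is already a Euclidean ball sitting inside $\mathbb{D}$, so its mean oscillation is trivially controlled by $\|f\|_{\textrm{BMO}_\textrm{B}}$. The substantive case is $r > \tfrac{1}{2}(1-|z|)$, where $D^h$ is a Euclidean disc clipped by $\partial\mathbb{D}$. Here I would produce an ``anchor'' Euclidean ball $B^* = B(z^*,r^*) \subset \mathbb{D}$ with $B^* \subset D^h$ and $|B^*| \geq c|D^h|$ for a universal constant $c>0$: one obtains $z^*$ by shifting $z$ slightly toward the origin to clear $\partial\mathbb{D}$, and picks $r^*$ of the order of $\min\{r,\, 1-|z|,\, 1\}$. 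When $r$ is so large that $D^h$ is essentially the whole disc, one may simply take $B^* = B(0,\tfrac{1}{4})$.

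Next I would bound $E_{D^h}(|f - E_{B^*}(f)|)$ via a Whitney-type cover of $D^h$ by bounded-overlap Euclidean balls $\{B_k\} \subset \mathbb{D}$, each with radius comparable to its distance to $\partial\mathbb{D}$, together with a chain of $N_k$ overlapping Euclidean balls in $\mathbb{D}$ joining each $B_k$ to $B^*$. The telescoping estimate $|E_{B_k}(f) - E_{B^*}(f)| \lesssim N_k\|f\|_{\textrm{BMO}_\textrm{B}}$, combined with $E_{B_k}(|f - E_{B_k}(f)|) \leq \|f\|_{\textrm{BMO}_\textrm{B}}$, yields after summing $E_{D^h}(|f - E_{B^*}(f)|) \lesssim \|f\|_{\textrm{BMO}_\textrm{B}}$; the triangle inequality $E_{D^h}(|f - E_{D^h}(f)|) \leq 2\,E_{D^h}(|f - E_{B^*}(f)|)$ then closes the argument.

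The main obstacle is the careful accounting $\sum_k |B_k|(1 + N_k) \lesssim |D^h|$, where the logarithmic chain length $N_k \sim \log(r/\textrm{rad}(B_k))$ must be absorbed by the geometric decay of the Whitney cover toward $\partial\mathbb{D}$. This is the quantitative manifestation of the well-known fact that $\mathbb{D}$ is a John (in fact uniform) domain, which is ultimately why its ``relative'' and Euclidean-ball BMO norms coincide.
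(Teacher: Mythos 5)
Your proposal is correct, but it takes a genuinely different route from the paper. The paper proves the cube version (Lemma \ref{L:BMO1}) in detail and declares the ball version ``similar'': the easy direction is the same containment-and-area-comparison you use, but for the hard direction the paper invokes Jones' extension theorem (Lemma \ref{F:Jones}) for domains bounded by quasicircles --- it extends $f$ to $\tilde f\in \mathrm{BMO}(\cc)$ with $\|\tilde f\|_{\mathrm{BMO}(\cc)}\leq c\|f\|_{\mathrm{BMO}_{\mathrm{C}}(\dd)}$ and then encloses each homogeneous ball $D^h(z_0,r)$ in a full Euclidean cube $Q(z_0,2r)$ of comparable area, so the oscillation over the clipped ball is controlled by the oscillation of $\tilde f$ over an unclipped set. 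You instead stay inside $\dd$ and run an intrinsic chaining argument: a Whitney cover of $D^h$ by balls with radius comparable to their distance to $\partial\dd$, logarithmic chains to an anchor ball $B^*$, and the bookkeeping $\sum_k |B_k|(1+N_k)\lesssim |D^h|$, which indeed closes because the $\log$ factor is absorbed by the geometric decay of the Whitney layers (at scale $2^{-j}r$ one has $\sim 2^j$ balls of area $\sim 2^{-2j}r^2$ and chain length $\sim j$, and $\sum_j (1+j)2^{-j}<\infty$, while $|D^h|\gtrsim r^2$ by convexity of $\dd$). What each approach buys: yours is self-contained and elementary, replacing a nontrivial citation by a quantitative John-domain argument, at the cost of the Whitney accounting; the paper's is shorter modulo Jones' theorem and fits its surrounding architecture, since it needs the cube space $\mathrm{BMO}_{\mathrm{C}}$ anyway to apply Reimann's composition theorem, and the extension trick handles cubes and balls uniformly. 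Both yield the two-sided norm equivalence, which is what the isomorphism statement requires.
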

\smallno Since the ideas in the proofs of the above two lemmas are similar, we only prove Lemma \ref{L:BMO1}. We need the following basic fact whose proof is easy and will be skipped.
\begin{lemma}\label{L:BMO}
Fix $f \in L^1_{\emph{loc}}(\dd)$. If for any homogeneous ball $\dh \subset \dd$,  there exists a number $c$ such that
$\frac{1}{|\dh|}\int_{\dh}|f(z)-c|dA(z)\leq c,$
then $\|f\|_{\emph{BMO}_\emph{H}(\mathbb{D})}\leq 2c.$
Moreover, a similar statement holds for $\emph{BMO}_\emph{C}(\mathbb{D})$ and $\emph{BMO}_\emph{B}(\mathbb{D})$.
\end{lemma}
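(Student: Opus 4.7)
The plan is to use the standard BMO ``best constant versus mean'' comparison, which proceeds purely by the triangle inequality. First I would fix notation to resolve the slight ambiguity in the statement: read the hypothesis as ``there exists a constant $C$ such that for every homogeneous ball $\dh \subset \dd$ there is a number $c_{\dh}$ (depending on $\dh$) with $\frac{1}{|\dh|}\int_{\dh} |f(z) - c_{\dh}|\, dA(z) \leq C$.'' Under this reading the conclusion is $\|f\|_{\mathrm{BMO}_{\mathrm H}} \leq 2C$, which is the familiar fact that the BMO seminorm does not care whether one subtracts the mean value, the median, the infimum over constants, or indeed any suitable constant.

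Fix a homogeneous ball $\dh$ and set $c=c_{\dh}$. By the triangle inequality,
\begin{equation*}
\frac{1}{|\dh|}\int_{\dh} |f(z) - E_{\dh}(f)|\, dA(z) \leq \frac{1}{|\dh|}\int_{\dh} |f(z) - c|\, dA(z) + |E_{\dh}(f) - c|.
\end{equation*}
The first term on the right is $\leq C$ by hypothesis. For the second term I would note that $E_{\dh}(f)-c = E_{\dh}(f-c)$, so by the integral triangle inequality
\begin{equation*}
|E_{\dh}(f) - c| = \left|\frac{1}{|\dh|}\int_{\dh} (f(z)-c)\, dA(z)\right| \leq \frac{1}{|\dh|}\int_{\dh} |f(z)-c|\, dA(z) \leq C.
\end{equation*}
Summing and taking the supremum over all homogeneous balls $\dh$ gives $\|f\|_{\mathrm{BMO}_{\mathrm H}} \leq 2C$.

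For the ``moreover'' clause, I would point out that the argument above only uses two features of the family of sets over which one averages: that averaging is a positivity-preserving bounded linear functional, and that the triangle inequality holds. Thus the identical proof, with $\dh$ replaced by Euclidean cubes contained in $\dd$ (respectively, Euclidean balls in $\dd$), yields the analogous bound for $\|f\|_{\mathrm{BMO}_{\mathrm C}}$ and $\|f\|_{\mathrm{BMO}_{\mathrm B}}$. There is essentially no obstacle here: this is a purely formal computation, and the only delicate point is the notational one in the statement itself, namely to make explicit that the constant $c = c_{\dh}$ in the hypothesis is allowed to depend on the ball while the global bound $C$ is not.
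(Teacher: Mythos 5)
Your proof is correct and is exactly the standard triangle-inequality argument the authors had in mind; the paper itself omits the proof, stating only that it ``is easy and will be skipped.'' Your clarification of the statement's notation --- that the subtracted constant $c_{\dh}$ may depend on the ball while the bound $C$ is uniform --- is the right reading and worth making explicit.
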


\begin{proof}[Proof of Lemma \ref{L:BMO1}] Let $Q=Q(z_0,2r)\subset\dd$ be a cube with center $z_0$ and with side length $2r$. Let $B=\dh(z_0, \sqrt{2}r)$ so that $Q\subset B$. Then
\begin{eqnarray*}\label{1004eq6}
\frac{1}{|Q|}\int_{Q}|f(z)-E_Bf|dA(z)&\leq& \frac{|B|}{|Q|}\frac{1}{|B|}\int_{B}|f(z)-E_Bf|dA(z)\\
&\leq& \frac{\pi}{2}\|f\|_{\textrm{BMO}_\textrm{H}(\mathbb{D})}.
\end{eqnarray*}
By Lemma \ref{L:BMO}, $\|f\|_{\textrm{BMO}_\textrm{C}(\mathbb{D})}\leq \pi\|f\|_{\textrm{BMO}_\textrm{H}(\mathbb{D})}.$
Next, we show that the mapping $i: \textrm{BMO}_{\textrm{H}}(\dd)\to \textrm{BMO}_\textrm{C}(\dd)$ is onto. To do so, we need the following extension theorem of Jones which involves quasicircles. A Jordan curve $\gamma$ in the plane is called a $K$-quasicircle if there is a $K$-quasiconformal mapping $\varphi: \cc\to\cc$ such that $\varphi(\tt)=\gamma$.
In \cite{Jones1980}, Jones proved
\begin{lemma}\label{F:Jones}
 Let $\Omega$ be a bounded planar domain whose boundary is a $K$-quasicircle.
 If $f\in \emph{BMO}_\emph{C}(\Omega)$, then there exists an $\tilde{f}\in \emph{BMO}_\emph{C}(\cc)$ such that
\begin{itemize}
\item[\emph{(i)}] $\tilde{f}(z)=f(z), z\in\Omega$, and
\item[\emph{(ii)}] $\|\tilde{f}\|_{\emph{BMO}_\emph{C}(\cc)}\leq c\|f\|_{\emph{BMO}_\emph{C}(\Omega)}$, where $c=c(K)$ is a constant depending only on $K$.
\end{itemize}
\end{lemma}

\noindent Let us continue with the proof of Lemma \ref{L:BMO1}.
If $0<r<1+|z_0|$, observe that $\dh(z_0,r)\subset Q(z_0, 2r)$, then we set $Q(z_0)=Q(z_0, 2r)$. Otherwise, since $\dh(z_0, r)=\dd$, we take $Q(z_0)=Q(0, 2)$. Then
\begin{eqnarray*}
\frac{1}{|\dh(z_0,r)|}\int_{\dh(z_0,r)}|f(z)-
E_{Q(z_0)}\tilde{f}|dA(z)
&\leq&\frac{1}{|\dh(z_0,r)|}\int_{Q(z_0)}|\tilde{f}(z)-E_{Q(z_0)}\tilde{f}|dA(z)\\
&\leq& \frac{32}{\pi}\|\tilde{f}\|_{\textrm{BMO}(\cc)}\\
&\leq&\frac{32c}{\pi}\|f\|_{\textrm{BMO}_\textrm{C}(\mathbb{D})}.
\end{eqnarray*}
 So by Lemma \ref{L:BMO},  it follows $\|f\|_{\textrm{BMO}_\textrm{H}(\mathbb{D})}\leq \frac{64c}{\pi}\|f\|_{\textrm{BMO}_\textrm{C}(\mathbb{D})}$.  The proof of Lemma \ref{L:BMO1} is complete now.
 \end{proof}

\subsection{Proof of Theorem \ref{T:Shexpra}}

\begin{proof}
Part (ii) follows from Theorem \ref{T:boundforC} and the so-called Kolmogorov inquality. 
It is based on the factorization \begin{equation}\label{E:factorizarionP}
P_\varphi=C_\varphi\circ I_\varphi,
\end{equation} where
$I_\varphi f(z)=\int_{\mathbb{D}}\frac{f(w)}{(1-z\overline{\varphi(w)})^2}dA(w).$
Since $\widetilde{\varphi}^{-1}$ is Lipschitz on $\tt$, by the remark after the proof of  Theorem \ref{T:boundforCC}, $C_\varphi: L_a^p(\dd)\to L^p(\dd)$ is bounded for all $0<p<\infty$.
Thus $\|P_\varphi (f)\|_{L^p(\dd)} \lesssim \|I_\varphi (f)\|_{L^p(\dd)}.$
Let $g(z)=f\circ\varphi^{-1}(z)J(z, \varphi^{-1})$, so $I_\varphi f(z)=P(g)(z).$ Now the  Kolmogorov inequality \cite{DHZZ} for $P$ completes the proof.

\bigno Next, we prove Part (i) and Part (iii). We shall use the   interpolation theorem regarding BMO functions on homogeneous spaces.
\begin{lemma}\textsc{(\cite{DY2005})} \label{F:interpolation}
Let $\mathcal{X}$ be a space of homogeneous type. Let $1<p<\infty$.
  If a linear operator $T$ is bounded on $L^p(\mathcal{X})$, and if it is bounded from $L^\infty(\mathcal{X})$ to $\emph{BMO}_\emph{H}(\mathcal{X})$, then $T:L^q(\mathcal{X})\to L^q(\mathcal{X})$ is bounded for all $p<q<\infty$.
\end{lemma}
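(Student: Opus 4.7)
The plan is to prove this by the Fefferman--Stein sharp-maximal-function technique, adapted to spaces of homogeneous type. Introduce the sharp maximal function $M^{\#} g(x) = \sup_{B \ni x} \frac{1}{\mu(B)} \int_{B} |g - g_B|\, d\mu$ on $\mathcal{X}$, where the supremum runs over all balls containing $x$ and $g_B$ denotes the average of $g$ over $B$. Recall the Fefferman--Stein norm equivalence: for every $r \in (1, \infty)$ and every $g$ known a priori to belong to some $L^{r_0}(\mathcal{X})$ with $r_0 < \infty$, one has $\|g\|_{L^r(\mathcal{X})} \leq C_r \|M^{\#} g\|_{L^r(\mathcal{X})}$. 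In the homogeneous setting this is established via Christ's dyadic cubes \cite{Christ} together with the John--Nirenberg inequality on $(\mathcal{X}, \rho, \mu)$.

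The heart of the argument is the pointwise estimate
\[ M^{\#}(Tf)(x) \leq C \bigl(M(|f|^p)(x)\bigr)^{1/p}, \]
where $M$ denotes the Hardy--Littlewood maximal operator on $\mathcal{X}$. To obtain it, fix a ball $B \ni x$, choose a sufficiently large dilate $B^{*} = \lambda B$ (using the doubling property), and split $f = f_1 + f_2$ with $f_1 = f \chi_{B^{*}}$. The local piece contributes
\[ \frac{1}{\mu(B)} \int_{B} |T f_1|\, d\mu \;\lesssim\; \Bigl(\tfrac{1}{\mu(B^{*})} \int_{B^{*}} |T f_1|^p\, d\mu \Bigr)^{1/p} \;\lesssim\; \bigl(M(|f|^p)(x)\bigr)^{1/p} \]
by H\"older's inequality and the hypothesized $L^p$-boundedness of $T$. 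The global piece $T f_2$ is handled by exploiting that $M^{\#}$ is insensitive to constants, so one may replace $Tf_2$ by $Tf_2 - c_B$ for a well-chosen $c_B$, and then apply the assumption $T : L^{\infty} \to \mathrm{BMO}$ on successive annular dilates $2^{k+1}B \setminus 2^{k}B$ with a telescoping sum, again absorbing everything into $(M|f|^p(x))^{1/p}$.

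With the pointwise estimate in hand, the lemma follows quickly. For any $q \in (p, \infty)$ and any $f \in L^p(\mathcal{X}) \cap L^q(\mathcal{X})$, the hypothesis gives $Tf \in L^p(\mathcal{X})$, so the Fefferman--Stein inequality applies and yields
\[ \|Tf\|_{L^q(\mathcal{X})} \;\leq\; C\|M^{\#}(Tf)\|_{L^q(\mathcal{X})} \;\leq\; C'\bigl\| (M|f|^p)^{1/p} \bigr\|_{L^q(\mathcal{X})} \;=\; C' \|M|f|^p\|_{L^{q/p}(\mathcal{X})}^{1/p} \;\leq\; C''\|f\|_{L^q(\mathcal{X})}, \]
where the last step uses the Hardy--Littlewood maximal theorem on $L^{q/p}(\mathcal{X})$, valid since $q/p > 1$. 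Density of $L^p \cap L^q$ in $L^q$ completes the proof.

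The main technical obstacle will be the pointwise estimate on $M^{\#}(Tf)$, in particular the global part: without any kernel representation of $T$ there is no direct way to compare $Tf_2$ across faraway balls. The Duong--Yan argument in \cite{DY2005} overcomes this by judiciously selecting $c_B$ (e.g.\ the $B^{*}$-average of $Tf_2$) and iterating the $L^{\infty} \to \mathrm{BMO}$ bound on the nested annuli, using doubling to control the constants uniformly. Once this telescoping estimate is pinned down on the abstract homogeneous space, the rest is a routine application of the Hardy--Littlewood and Fefferman--Stein machinery.
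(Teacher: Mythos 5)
The paper does not prove this lemma; it quotes it from Duong--Yan \cite{DY2005}, so the only thing to assess is whether your argument stands on its own. It does not: the central step, the pointwise bound $M^{\#}(Tf)(x)\leq C\bigl(M(|f|^{p})(x)\bigr)^{1/p}$, is not a consequence of the stated hypotheses. That estimate is a property of operators carrying a Calder\'on--Zygmund-type kernel (or at least some off-diagonal decay), because the whole content of the ``global piece'' is that $Tf_2$ oscillates little on $B$ when $f_2$ lives far from $B$ --- and that is exactly the information a kernel provides and an abstract bounded operator does not. Your proposed fix, ``apply the assumption $T:L^{\infty}\to\mathrm{BMO}$ on successive annuli with a telescoping sum,'' cannot be carried out: first, $f_2=f\chi_{(B^{*})^{c}}$ need not be bounded, so the hypothesis does not even apply to it; second, even when it does apply, it controls only the global BMO seminorm of $Tf_2$, not its oscillation on the particular ball $B$ in terms of averages of $f$ on dilates of $B$. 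You flag this obstacle yourself at the end, but the Duong--Yan machinery you invoke to dispose of it also assumes kernel-type (heat-semigroup) off-diagonal bounds; for a bare operator bounded on $L^{p}$ and from $L^{\infty}$ to $\mathrm{BMO}$ the pointwise domination is simply unavailable, and indeed false in general (think of operators that move mass arbitrarily far, e.g.\ suitable rank-one operators).

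The repair is to give up the pointwise estimate and interpolate the sublinear operator $S f = M^{\#}(Tf)$ directly. One has $\|Sf\|_{L^{p}}\leq 2\|M(Tf)\|_{L^{p}}\leq C\|Tf\|_{L^{p}}\leq C\|f\|_{L^{p}}$ by $M^{\#}\leq 2M$ and the Hardy--Littlewood maximal theorem on $(\mathcal{X},\rho,\mu)$ (here $p>1$ is used), while $\|Sf\|_{L^{\infty}}=\|Tf\|_{\mathrm{BMO}_{\mathrm{H}}}\leq C\|f\|_{L^{\infty}}$ is precisely the second hypothesis. Marcinkiewicz interpolation then gives $\|M^{\#}(Tf)\|_{L^{q}}\leq C\|f\|_{L^{q}}$ for $p<q<\infty$, and your final step --- the Fefferman--Stein inequality $\|g\|_{L^{q}}\leq C\|M^{\#}g\|_{L^{q}}$ for $g=Tf$ with the a priori information $Tf\in L^{p}$, followed by density of $L^{p}\cap L^{q}$ in $L^{q}$ --- goes through unchanged. (On a finite-measure space such as $\dd$ one must also account for the constant term, since $M^{\#}$ annihilates constants; the average of $Tf$ is controlled by $\|Tf\|_{L^{p}}\lesssim\|f\|_{L^{q}}$, so this is harmless.) The parts of your write-up that survive are exactly the Fefferman--Stein inequality via Christ's dyadic cubes and John--Nirenberg, and the closing density argument; the maximal-function bookkeeping with $M(|f|^{p})^{1/p}$ should be deleted.
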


\smallno By Theorem $\ref{T:boundforC}$, $C_\varphi$ is bounded from $\bg$ into $L^2(\dd)$, hence  $P_\varphi$   bounded on $L^2(\dd)$. Now by standard interpolation and duality arguments, it is sufficient to  prove that $P_\varphi: L^\infty(\dd)\to \textrm{BMO}_\textrm{H}(\dd)$ is bounded.
We need the following   special case of Reimann's theorem \cite{Reimann1974}.
\begin{lemma} \label{L:Reimann}Let $\varphi$ be a quasiconformal mapping over $\dd$ with $\varphi(0)=0$. If $f\in \emph{BMO}_{\emph{C}}(\dd)$, then
$$\|f\circ \varphi\|_{\emph{BMO}_\emph{C}}\leq c\|f\|_{\emph{BMO}_\emph{C}},$$
where $c=c(K)$ depends only on $K$.
\end{lemma}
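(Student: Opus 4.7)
The plan is to combine Theorem \ref{T:BMO} (so that BMO can be tested on Euclidean balls) with the two core geometric features of $K$-quasiconformal mappings: quasisymmetric distortion of balls, and Gehring's reverse Hölder inequality for the Jacobian.

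Fix a Euclidean ball $B = B(z_0,r) \subset \dd$. First I would invoke the Mori--Gehring quasisymmetry theorem (a standard consequence of the analytic definition of $K$-quasiconformality): the image $\varphi(B)$ is trapped between two concentric Euclidean balls
$$B_* \subset \varphi(B) \subset B^*$$
about $\varphi(z_0)$ whose radii are comparable up to a factor depending only on $K$. In particular $|B^*| \le c(K)|\varphi(B)|$, and the symmetric statement applied to $\varphi^{-1}$ yields $|\varphi^{-1}(B^*)| \le c(K)|B|$. Take the test constant $c_B := f_{B^*}$. A change of variables then rewrites the oscillation as
$$\frac{1}{|B|}\int_B |f\circ\varphi - f_{B^*}|\,dA \;=\; \frac{1}{|B|}\int_{\varphi(B)} |f - f_{B^*}|\,J_{\varphi^{-1}}\,dA.$$

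Next I would apply Hölder's inequality with a conjugate pair $(p,p')$, $p>1$ to be chosen momentarily. The $L^{p'}$ factor on $B^*$ is handled by the John--Nirenberg exponential integrability of BMO, giving $\|f-f_{B^*}\|_{L^{p'}(B^*)} \le c(p')\|f\|_{\textrm{BMO}_{\textrm{B}}}|B^*|^{1/p'}$. For the Jacobian factor I would invoke Gehring's fundamental higher-integrability theorem: $J_{\varphi^{-1}}$ satisfies a reverse Hölder inequality with some exponent $p = p(K) > 1$, and hence
$$\left(\int_{B^*} J_{\varphi^{-1}}^p\,dA\right)^{1/p} \le c(K)\,|B^*|^{1/p-1}\int_{B^*} J_{\varphi^{-1}}\,dA \le c(K)\,|B^*|^{1/p-1}|\varphi^{-1}(B^*)| \le c(K)\,|B^*|^{1/p-1}|B|.$$
Multiplying the two bounds and using $|B^*|^{1/p'+1/p-1}=1$ together with $|B^*|\asymp|\varphi(B)|$, the powers of $|B^*|$ cancel and the leading $|B|$ from the Jacobian bound cancels against $1/|B|$, leaving $c(K)\|f\|_{\textrm{BMO}_{\textrm{B}}}$. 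By Lemma \ref{L:BMO} this yields $\|f\circ\varphi\|_{\textrm{BMO}_{\textrm{B}}} \le c(K)\|f\|_{\textrm{BMO}_{\textrm{B}}}$, and Theorem \ref{T:BMO} transports the conclusion to $\textrm{BMO}_{\textrm{C}}$.

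The main obstacle I anticipate is uniformity near the boundary: for balls $B$ close to $\tt$, Gehring's reverse Hölder inequality and the Mori distortion bound are usually stated for balls well inside the domain. The standard remedy is to extend $\varphi$ by reflection across $\tt$ (using the boundary homeomorphism recorded in \cite{lv}, Theorem 8.2) to a $K'$-quasiconformal self-map of $\cc$ with $K' = K'(K)$, so that both the quasisymmetric distortion of balls and the higher integrability of $J_{\varphi^{-1}}$ apply over Euclidean balls with constants depending only on $K$. A secondary bookkeeping point is the quantitative comparability $|B^*|\asymp|\varphi(B)|\asymp|B_*|$, which must be established uniformly in $z_0$; this is exactly the content of the $\eta$-quasisymmetry of $\varphi$ with $\eta$ depending only on $K$, and is inherited by the reflected extension.
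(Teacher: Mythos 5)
The paper does not actually prove this lemma: it is quoted verbatim as a special case of Reimann's theorem with a citation to \cite{Reimann1974}, so there is no internal argument to compare against. Your proposal is, in essence, a correct reconstruction of Reimann's original proof: the three ingredients you name --- $\eta$-quasisymmetric distortion of balls (so $B_*\subset\varphi(B)\subset B^*$ with $|B^*|\le c(K)|B_*|$), Gehring's reverse H\"older inequality for the Jacobian, and John--Nirenberg exponential integrability --- are exactly the ones Reimann combines, and your exponent bookkeeping is right: H\"older with the Gehring exponent $p=p(K)$ gives $|B^*|^{1/p'}\cdot|B^*|^{1/p-1}=1$ and the surviving factor $|\varphi^{-1}(B^*)|\le c(K)|B|$ cancels the normalization $1/|B|$. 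Your remedy for boundary uniformity --- reflecting $\varphi$ across $\tt$ via $z\mapsto 1/\bar z$ to a global quasiconformal map (which, note, preserves the dilatation $K$ rather than degrading it to some $K'$) --- is also the standard one and makes both the quasisymmetry and the reverse H\"older inequality hold on all Euclidean balls with constants depending only on $K$.

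The one genuine gap is on the $f$ side rather than the $\varphi$ side: after reflection, the ball $B^*\supset\varphi(B)$ about $\varphi(z_0)$ need not be contained in $\dd$, so the test constant $f_{B^*}$ and the John--Nirenberg estimate $\|f-f_{B^*}\|_{L^{p'}(B^*)}\le c\|f\|_{\textrm{BMO}}|B^*|^{1/p'}$ are not defined for $f\in\textrm{BMO}_{\textrm{C}}(\dd)$ alone. You need to first extend $f$ to $\tilde f\in\textrm{BMO}(\cc)$ with $\|\tilde f\|_{\textrm{BMO}(\cc)}\le c\|f\|_{\textrm{BMO}_{\textrm{C}}(\dd)}$ and run the argument with $\tilde f$ (which agrees with $f$ on $\varphi(B)\subset\dd$, so the left-hand side is unchanged). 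This is exactly Jones's extension theorem, already recorded in the paper as Lemma \ref{F:Jones}, so the fix costs nothing; but as written your argument silently evaluates $f$ outside its domain.
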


\noindent So by Reimann's theorem Theorem \ref{T:BMO}, we only need to show that $I_\varphi: L^\infty(\dd)\to \textrm{BMO}_\textrm{H}(\dd)$ is bounded.  We recall the Bloch space consists of holomorphic functions on $\dd$ such that $\|f\|_{\beta}=\sup_{z\in\dd} (1-|z|^2)|f'(z)|<\infty.$
Let $\textrm{BMOA}(\dd)=\textrm{Hol}(\dd)\cap \textrm{BMO}_{\textrm{H}}(\dd)$.
A  theorem of Coifman-Rochberg-Weiss (\cite{CRW1976}, p. 632) states that  for each $f\in \mathcal{B},\|f\|_{\beta}\approx\|f\|_{\emph{BMOA}(\dd)}$.
 So the proof of Part (i)  is reduced to show that $\textrm{I}_\varphi:L^\infty(\mathbb{D})\to \mathcal{B}$ is bounded.

\bigno Observe that for any $f\in L^\infty(\dd)$, $I_\varphi f$ is analytic on $\dd$. For any $z\in\dd$,
\begin{eqnarray*}\label{s5eq01}
|(I_\varphi f)'(z)|
&\leq&2\int_{\mathbb{D}}\frac{|f(w)\varphi (w)|}{|1-\overline{z}\varphi(w)|^3}dA(w)\\
& \leq &
2\|f\|_{L^\infty} \|C_\varphi\|_{L_a^3(\dd)\to L^3(\dd)}^3 \int_{\dd}\frac{1}{|1-z\bar{w}|^3}dA(w).
\end{eqnarray*}
 Observe that $\|C_\varphi\|_{L_a^3(\dd)\to L^3(\dd)}^3<\infty$ by the remark after the proof of Theorem \ref{T:boundforCC}.
Now we can conclude the proof  by a standard estimate (\cite{zhu2007}, Lemma 3.10), which is recorded below for  the reader's convenience.
\begin{lemma}\label{L:integralestimates}
Let $z\in \mathbb{D}, c>0$, $t>-1$, and
$
I_{c,t}(z)=\int_{\mathbb{D}}\frac{(1-|w|^2)^t}{|1-z\bar{w}|^{2+t+c}}dA(w).
$
Then
$
I_{c,t}(z)\sim\frac{1}{(1-|z|^2)^c} \  \textrm{as} \ |z|\to 1^{-}.
$
\end{lemma}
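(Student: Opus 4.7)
The plan is to reduce the two-dimensional integral to a one-variable power series in $|z|^2$ by exploiting rotational symmetry and orthogonality of monomials, then extract the asymptotic rate from Stirling's formula.

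\textbf{Step 1 (Reduction and lower bound).} The integrand depends on $w$ only through $z\bar w$ and $|w|$, so a rotation $w\mapsto e^{i\alpha}w$ shows $I_{c,t}(z)$ is a function of $|z|$ alone; set $z=r\in[0,1)$. The lower bound $I_{c,t}(r)\geq c_0(1-r^2)^{-c}$ is immediate by restricting the integral to the pseudohyperbolic disk $D^{ph}(r,1/2)$: on that disk $1-|w|^2$ and $|1-r\bar w|$ are both comparable to $1-r^2$, while $|D^{ph}(r,1/2)|$ is comparable to $(1-r^2)^2$, which yields the claimed growth.

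\textbf{Step 2 (Series for the upper bound).} Let $s=1+(t+c)/2$, so $2s=2+t+c$. The binomial series gives
\begin{equation*}
\frac{1}{|1-r\bar w|^{2s}}=\frac{1}{(1-r\bar w)^s(1-rw)^s}=\sum_{n,m\geq 0}\frac{(s)_n(s)_m}{n!\,m!}\,r^{n+m}\,\bar w^{n}w^{m},
\end{equation*}
where $(s)_n=\Gamma(n+s)/\Gamma(s)$. Writing $w=\rho e^{i\theta}$, integration in $\theta$ kills every off-diagonal term by orthogonality of $\{e^{ik\theta}\}$, leaving
\begin{equation*}
I_{c,t}(r)=2\sum_{n=0}^{\infty}\frac{(s)_n^{2}}{(n!)^{2}}\,r^{2n}\int_0^1(1-\rho^{2})^{t}\rho^{2n+1}d\rho.
\end{equation*}

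\textbf{Step 3 (Beta integral and Stirling).} The substitution $u=\rho^{2}$ turns the radial integral into a Beta integral, yielding $\frac{n!\,\Gamma(t+1)}{2\,\Gamma(n+t+2)}$. Hence
\begin{equation*}
I_{c,t}(r)=\Gamma(t+1)\sum_{n=0}^{\infty}\frac{\Gamma(n+s)^{2}}{\Gamma(s)^{2}\,n!\,\Gamma(n+t+2)}\,r^{2n}.
\end{equation*}
Since $2s-t-3=c-1$, Stirling's formula gives the coefficient asymptotic
\begin{equation*}
\frac{\Gamma(n+s)^{2}}{\Gamma(s)^{2}\,n!\,\Gamma(n+t+2)}\sim\frac{n^{c-1}}{\Gamma(s)^{2}}\quad(n\to\infty),
\end{equation*}
so the coefficient is sandwiched between positive constant multiples of $\Gamma(n+c)/(n!\,\Gamma(c))$ for all large $n$.

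\textbf{Step 4 (Comparison with $(1-r^{2})^{-c}$).} The generating identity $\sum_{n\geq 0}\frac{\Gamma(n+c)}{n!\,\Gamma(c)}r^{2n}=(1-r^{2})^{-c}$ has positive coefficients with the same $n^{c-1}$ growth. Since our series and the model series are both positive with asymptotically proportional coefficients, their sums are comparable as $r\to 1^{-}$, giving $I_{c,t}(r)\sim(1-r^{2})^{-c}$. The main technical obstacle is the Stirling comparison in Step 3, specifically transferring a coefficient-wise asymptotic to a two-sided bound on the full sum uniformly in $r$; this is routine for positive series but must be carried out carefully, e.g.\ by splitting the sum at $N\sim 1/(1-r^{2})$ and estimating head and tail separately, or by noting that the ratio of coefficients converges to a positive constant so the ratio of sums does as well.
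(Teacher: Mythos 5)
Your proof is correct. The paper does not prove this lemma at all --- it simply records it as a standard estimate quoted from Zhu's book (\cite{zhu2007}, Lemma 3.10) --- and your argument (rotation invariance, binomial expansion of the kernel, orthogonality of $e^{ik\theta}$, Beta integrals, and Stirling comparison of coefficients against the model series $\sum_n \frac{\Gamma(n+c)}{n!\,\Gamma(c)}r^{2n}=(1-r^2)^{-c}$) is precisely the standard proof given in that reference, with the transfer from coefficient asymptotics to two-sided bounds on the sums handled correctly.
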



\end{proof}

\section{Concluding Remarks: Operator-theoretic Questions}\label{Subs:OTquestions}
In this section we discuss some questions which are part of the focus for  composition operators when the symbol $\varphi: \dd \to \dd$ is analytic; in particular, we discuss the characterization of compactness,   Schatten class membership, and conditions for closed ranges.
They are relatively easy now after we have laid the ground work in the previous sections.
The real challenge on these operator-theoretic properties probably only arises when one deals  with quasiregular symbols in the future.
For this reason the present section discusses some results, without proofs, to illustrate  the parallelism between quasiconformal and analytic symbols, which suggests that the quasiconformal  extension is a feasible idea  since a decent theory can be expected.
The proofs should present no serious difficulty for anyone reasonably skilled in analytic composition operator theory.
{By Theorem \ref{T:compactforC}, $C_\varphi:L^p_a(\mathbb{D})\to L^p(\mathbb{D})$ is compact for some $0<p<\infty$ if and only if $\mathop{\lim}\limits_{|z|\to 1} m_\varphi(z)=0.$ Now we let $\mathcal{S}_p$  ($0<p<\infty$)  denote the Schatten class of bounded operators between $L_a^2(\mathbb{D})$ and $L^2(\mathbb{D})$. Using  ideas of Luecking and Zhu (Theorem 3, \cite{LZ1992}) in analytic settings, one can prove  that   the quasiconformal composition operator $C_\varphi$   on $L_a^2$ is in $\mathcal{S}_p$ if and only if for some $r \in (0, 1)$, we have
$$
 \int_{r<|z|<1}\bigg(\frac{N_\varphi(z)}{\log\frac{1}{|z|}}\bigg)^pd\lambda(z)<\infty,
$$ where $d\lambda(z)$ is the Mobius-invariant measure given by
$
d\lambda(z)=\frac{1}{(1-|z|^2)^2}dA(z).
$
Next we consider when $C_\varphi$ has a closed range. In other words, we are concerned with when $C_\varphi$ is bounded below. With an application of  reverse Carleson measures on the Bergman space and Lemma \ref{L:areachangephd}, one can show that $C_\varphi$ is bounded below on $L^p_a(\dd)$ if and only if $\widetilde{\varphi}$ is Lipschitz on $\mathbb{T}$. This should be compared with the classical result of Akeroyd and Ghatage \cite{AG}.} Moreover, Luecking-type conditions \cite{Luecking1981}, \cite{Luecking1985} can  be used to provide sufficient conditions for closed ranges.

\bigno \textbf{Acknowledgement}

\bigno X. Fang  is supported by MOST of Taiwan (106-2115-M-008 -001 -MY2) and NSFC 11571248 during his visit to Soochow University in China.
K. Guo is supported by NSFC (11371096).
Z. Wang is supported by NSFC (11601296)
, NSF of Shaanxi (2017JQ1008) and MOST 105-2811-M-008-029 during his visit to National Central University in Taiwan.

\bigno
The project of using quasiconformal symbols for composition operators on the Hardy or Bergman spaces was first conceived (for us) in 2011 when the first author was still at Kansas State University, after several discussions with his colleague (Pietro Poggi-Corradini), to whom we are most deeply grateful. The first draft of this paper was completed and reported in the 2014 Chongqing Analysis Meeting. We are grateful to Richard Rochberg for bringing his paper \cite{rochberg1994} to our attention after our talk and giving us other suggestions. Several drafts of this paper have been circulated in the last few years and the current version is considerably shortened.

\bigskip

\author{X. Fang, Department of Mathematics, National Central University, Chung-Li 32001, Taiwan; Email, xfang@math.ncu.edu.tw}

\bigskip

\author{K. Guo, School of Mathematical Sciences, Fudan University, Shanghai 200433, P. R. China; Email, kyguo@fudan.edu.cn}

\bigskip

\author{Z. Wang, School of Mathematical Sciences, Shaanxi Normal University, Xi'an 710062, P. R. China, Email, zipengwang@snnu.edu.cn}


\begin{thebibliography}{99}

\bibitem{ah1}
L. Ahlfors, Lectures on Quasiconformal Mappings, 2nd ed., University Lecture Series, vol. 38, American Mathematical Society, Providence, RI, 2006.
\bibitem{AG}
J. Akeroyd and P. Ghatage, Closed-range composition operators on $\mathbb{A}^2$, Illinois J. Math. \textbf{52}(2008), no. 2, 533-549.
\bibitem{APR2013}
A. Aleman, S. Pott and M. Reguera, Sarason conjecture on the Bergman space, Int. Math. Res. Notices. (IMRN), \textbf{2017}(2017), no. 14, 4320-4349.

\bibitem{AVM1988}
G. Anderson, M. Vamanamurthy and M. Vuorinen, Sharp distortion theorems for quasiconformal mappings, Trans. Amer. Math. Soc. \textbf{305}(1988), no. 1, 95-111.
\bibitem{AVM1997}
G. Anderson, M. Vamanamurthy and M. Vuorinen, Conformal Invariants, Inequalities, and Quasiconformal Maps, Canadian Mathematical Society Series of Monographs and Advanced Texts, John Wiley and Sons, New York, 1997.
\bibitem{astala1994}
K. Astala, Area distortion of quasiconformal mappings, Acta Math. \textbf{173}(1994),  no. 1, 37-60.
\bibitem{AIM2008}
K. Astala, T. Iwaniec and G. Martin, Elliptic Partial dDifferential Equations and Quasiconformal Mappings in the Plane, Princeton Mathematical Series, vol. 48, Princeton University Press, Princeton, NJ, 2009.
\bibitem{AK2011}
K. Astala and P. Koslela, $H^p$-theory for quasiconformal mappings, Pure Appl. Math. Q. \textbf{7}(2011), no. 1, 19-50.
\bibitem{axlers}
S. Axler, Bergman spaces and their operators, Surveys of Some Recent Results in Operator Theory, vol. 1, Pitman Res. Notes Math. Ser. vol. 171, Longman Sci. Tech., Harlow, 1988, 1-50.
\bibitem{BB1978}
D. Bekolle and A. Bonami, Inegalites a poids pour le noyau de Bergman, [French], C. R. Acad. Sci. Paris A-B \textbf{286}(1978), no. 18, 775-778.
\bibitem{Choe}
B. Choe, H. Koo and W.  Smith, Composition as an integral operator, Adv. Math. \textbf{273}  (2015), 149-187.

\bibitem{Christ}
M. Christ, Weak Type $(1, 1)$ Bounds for Rough Operators, Ann. Math. \textbf{128} (1988) , 19-42.

\bibitem{CW1971}
R. Coifman and G. Weiss, Analyse Harmornique Non-commutative sur Certains Espaces Homogenes, [French], Lecture Notes in Math, vol. 242, Springer-Verlag,
Berlin-New York, 1971.
\bibitem{CM1995}
C. Cowen and B. MacCluer, Composition Operators on Spaces of Analytic Functions, Studies in Advanced Mathematics, CRC Press, Boca Raton, FL, 1995.
\bibitem{CG2006}
C. Cowen and E. Gallardo-Gutierrez, A new class of operators and a description of adjoints of composition operators, J. Funct. Anal. \textbf{238}(2006), no. 2, 447-462.
\bibitem{CRW1976}R. Coifman, R. Rochberg and G. Weiss,
Factorization theorems for Hardy spaces in several variables, Annals of Mathematics. \textbf{103}(1976), no. 3, 611-635.
\bibitem{DH2009}
D. Deng and Y. Han, Harmonic Analysis on Spaces of Homogeneous Type, Lecture Notes in Mathematics, vol. 1966, Springer-Verlag, Berlin, 2009.
\bibitem{DHZZ}
Y. Deng, L. Huang, T. Zhao and D. Zheng, Bergman projection and Bergman spaces, J. Operator Theory. \textbf{46}(2001), no. 1, 3-24.
\bibitem{DY2005}
X. Duong and L. Yan, New function spaces of BMO type, the John-Nirenberg inequality, interpolation, and applications, Comm. Pure. Appl. Math. \textbf{58}(2005), no. 10, 1375-1420.
\bibitem{DS2003}
P. Duren and A. Schuster, Bergman Spaces, Mathematical Surveys and Monographs, vol. 100, American Mathematical Society, Providence, RI, 2004.


\bibitem{Grafakos2009}
L. Grafakos, Modern Fourier Analysis, 2nd ed., Graduate Texts in Mathematics, vol. 250, Springer, New York, 2009.
\bibitem{GJ1982}
J. Garnett and P. Jones, BMO from dyadic BMO, Pacific J. Math. \textbf{2}(1982), 351-371.
\bibitem{GGR1995}V. Goldshtein, L. Gurov and A. Romanov, Homeomorphisms that induce monomorphisms of Sobolev spaces, Israel J. Math. \textbf{91}(1995), 31-60.

\bibitem{Heinz1959}
E. Heinz, One-to-one harmonic mappings, Pacific J. Math. \textbf{9}(1959), 101-105.
\bibitem{HKM2014} S. Hencl, L. Kleprlik and J. Maly, Composition operator and Sobolev-Lorentz spaces $WL^{n,q}$, Studia Math. \textbf{221}(2014), no. 3, 197-208.

\bibitem{Hu}
Y. Hu and Y.L. Shen,  On quasisymmetric homeomorphisms, Israel J.
Math. \textbf{191} (2012), no. 1, 209-226.


\bibitem{IM2008}
T. Iwaniec and G. Martin, The Beltrami Equation, Mem. Amer. Math. Soc. \textbf{191}(2008), no. 893.
\bibitem{IN1985}
T. Iwaniec and C. Nolder, Hardy-Littlewood inequality for quasiregular mappings in
certain domains in $\mathbb{R}^n$, Ann. Acad. Sei. Fenn. Ser. A. \textbf{10}(1985), 267-282.
\bibitem{Jones1980}
P. Jones, Extension theorems for BMO, Indiana Univ. Math. J. \textbf{29}(1980), no. 1, 41-66.
\bibitem{KKSS1014}
H. Koch, P. Koskela, E. Saksman and T. Soto,
Bounded compositions on scaling invariant Besov spaces, J. Funct. Anal., \textbf{266}(2014), no. 5, 2765-2788.



\bibitem{KS2007}
H. Koo and W. Smith, Composition operators induced by smooth self maps of the unit ball in $\cc^n$, J. Math. Anal. Appl. \textbf{329}(2007), 617-633.
\bibitem{KW2009}
H. Koo and M. Wang, Composition operators induced by smooth self-maps of the real or complex unit balls, J. Funct. Anal. \textbf{256}(2009), no. 9, 2747-2767.



\bibitem{Koskela1994}
P. Koskela, An inverse Sobolev lemma, Rev. Mat. Iberoamericana, \textbf{10}(1994), 123-141.

\bibitem{KXZZ2017}
P. Koskela, J. Xiao, Y. Zhang and Y. Zhou, A quasiconformal composition problem for the Q-spaces, J. Eur. Math. Soc. (JEMS) \textbf{19}(2017), no. 4, 1159-1187.

\bibitem{lehto1966}
O. Lehto, Quasiconformal mappings in the plane,
Proceedings of the International Congress of Mathematicians, Moscow, 1966.
\bibitem{lv}
O. Lehto and K. Virtanen, Quasiconformal Mappings in the Plane, 2nd ed.,  Springer-Verlag, New York-Heidelberg, 1973.
\bibitem{Luecking1981}
D. Luecking, Inequalities on Bergman spaces, Illinois J. Math. \textbf{25}(1981), no. 1, 1-11.
\bibitem{Luecking1985}
D. Luecking, Forward and reverse Carleson inequalities for functions in Bergman spaces and their derivatives, Amer. J. Math. \textbf{107}(1985), no. 1, 85-111.
\bibitem{LZ1992}
D. Luecking and K. Zhu, Composition operators belonging to the Schatten ideals, Amer. J. Math.  \textbf{114}(1992),  no. 5, 1127-1145.

\bibitem{Ma1968}
O. Martio, On harmonic quasiconformal mappings, Ann. Acad. Sci. Fenn. Math. 1968, 3-10.
\bibitem{Mei2003}
T. Mei, BMO is the intersection of two translates of dyadic BMO, C. R. Acad. Sci. Paris, Ser. I. \textbf{336}(2003), 1003-1006.

\bibitem{MX1995}
P. Muhly and J. Xia, Calderon-Zygmund operators, local mean oscillation and certain automorphisms of the Toeplitz algebra, Amer. J. Math. \textbf{117}(1995), no. 5, 1157-1201.



\bibitem{Nag}
S. Nag, and D. Sullivan, Teichmuller theory and the universal
period mapping via quantum calculus and the $H^{\frac{1}{2}}$ space on the circle,
Osaka J. Math. \textbf{32} (1995), no. 1, 1-34.

\bibitem{Nolder1990}
C. Nolder, A characterization of certain measures using quasiconformal mappings.
Proc. Amer. Math. Soc. \textbf{109}(1990), no. 2, 349-356.
\bibitem{Pietro1998}
P. Poggi-Corradini, The essential norm of composition operators revisited, Studies on Composition Operators, Contemp. Math., vol. 213, Amer. Math. Soc., Providence, RI, 1998,
167-173.
 \bibitem{Pa2002}
 M. Pavlovic, Boundary correspondence under harmonic quasiconformal homeomorphisms of the unit disk, Ann. Acad. Sci. Fenn. Math. \textbf{27}(2002), no. 2, 365-372.

\bibitem{Reimann1974}
H. Reimann, Functions of bounded mean oscillation and quasiconformal mappings,
Comment. Math. Helv. \textbf{49}(1974), 260-276.

\bibitem{rochberg1994}
R. Rochberg, Projected composition operators on the Hardy space. Indiana Univ. Math. J. \textbf{43}(1994), no. 2, 441-458.

\bibitem{sarason1990}
D. Sarason, Composition operators as integral operators, Analysis and Partial Differential Equations, Lecture Notes in Pure and Appl. Math., vol. 122, Dekker, New York, 1990,  545-565.

\bibitem{Schippers}
E. Schippers and W.  Staubach,  A symplectic functional analytic
proof of the conformal welding theorem, Proc. Amer. Math. Soc. \textbf{143}
(2015), no. 1, 265-278.

\bibitem{S1987}
J. Shapiro, The essential norm of a composition operator, Annals of Mathematics. \textbf{125}(1987), no. 2, 375-404.
\bibitem{S1993}
J. Shapiro, Composition Operators and Classical Function Theory, Universitext: Tracts in Mathematics, Springer-Verlag, New York, 1993.
\bibitem{SM1993}
R. Singh and J. Manhas, Composition Operators on Function Spaces, North-Holland Mathematics Studies, vol. 179, North-Holland Publishing Co., Amsterdam, 1993.
\bibitem{JV1971}
J. Vaisala, Lectures on n-dimensional Quasiconformal Mappings, Lecture Notes in Mathematics, Vol. 229. Springer-Verlag, Berlin-New York, 1971.



\bibitem{wogen1988}
W. Wogen, The smooth mappings which preserve the Hardy space $H^2(B^n)$, Oper. Theory Adv. Appl. \textbf{35}(1988), 249-267.
\bibitem{ZJ1964}
V. Zaharjuta and V. Judovic, The general form of linear functional in $H'_{p'}$, [Russian], Uspekhi Mat. Nauk \textbf{19}(1964), 139-142.
\bibitem{zhu20071}
K. Zhu, Compact composition operators on Bergman spaces of the unit ball, Houston J. Math.  \textbf{33}(2007), no. 1, 273-283.
\bibitem{zhu2007}
K. Zhu, Operator Theory in Function Spaces, 2nd, ed., Mathematical Surveys and Monographs, vol. 138, American Mathematical Society, Providence, RI, 2007.
\end{thebibliography}
\end{document}